\documentclass[a4paper,11pt,reqno]{amsart}

\usepackage{graphicx}
\usepackage{xcolor}
\usepackage{xcolor}
\definecolor{darkgreen}{rgb}{0,0.41,0.11}
\usepackage{amssymb}
\usepackage{amsthm}
\usepackage{amssymb, amsmath}
\usepackage{enumerate}

\newcommand{\CJS}{CJS}

\usepackage{tikz}
\usetikzlibrary{arrows.meta,positioning,shapes}

\usepackage{marginnote}
\usepackage{enumerate}  

\usepackage[normalem]{ulem}
\usepackage{url}

\newtheorem{definition}{Definition}[section]
\newtheorem{dfn}[definition]{Definition}
\newtheorem{exa}[definition]{Example}
\newtheorem{rem}[definition]{Remark}
\newtheorem{prop}[definition]{Proposition}
\newtheorem{lem}[definition]{Lemma}
\newtheorem{coro}[definition]{Corollary}
\newtheorem{teo}{Theorem}

\newtheorem*{cla*}{Claim}

\newtheorem*{teo_green*}{Green's function Theorem}
\newtheorem*{teo*}{Theorem}

\newcommand{\N}{I\!\!N}

\usepackage[pagewise,displaymath,mathlines]{lineno}
\begin{document}
	
%	\linenumbers
	
	\bigskip

	\title[Random Dynamicss]{Random Dynamics of a Family of Cubic Polynomials}
	
	\author[]{A. M. Alves}
	\address[Alexandre Miranda Alves]{Departamento de Matem\'atica, Universidade Federal de Vi\c cosa, Brazil}
	\email{amalves@ufv.br }
	
	\author[]{G. Honorato}
	\address[Gerardo Andrés Honorato Gutiérrez]{Instituto de Ingeniería Matemática-CIMFAV, Universidad de Valparaíso, Chile}	
	\email{gerardo.honorato@uv.cl}
	
	%	\author[]{P. Mehdipour}
	%	\address[Pouya Mehdipour]{Departamento de Matem\'atica, Universidade Federal de Vi\c cosa, Brazil
		%	}
	%	\email{???}
	%	
	%	\author[]{D. S. Machado}
	%	\address[Diogo Machado]{Departamento de Matem\'atica, Universidade Federal de Vi\c cosa, Brazil}
	%	\email{diogo.machado@ufv.br }
	
	\author[]{M. Salarinoghabi}
	\address[Mostafa Salarinoghabi]{%
		Departamento de Matem\'atica, Universidade Federal de Vi\c cosa, Brazil.
	}
	\email{mostafa.salarinoghabi@ufv.br}
	
	\subjclass[2020]{%
		Primary 30D05; % primary: 
		Secondary 37F10, 37F12 % secondary: 
		. % secondary:  
	}

	\keywords{Complex dynamics, composition of polynomials, connectedness locus, Julia set, Fatou set.}

	\begin{abstract}
In this work, we study the non-autonomous dynamics generated by random iterations of the cubic family of the form $z^3 + cz$. The parameter sequence is chosen randomly from a bounded Borel subset of $\mathbb{C}$. We investigate topological properties of the corresponding Julia sets, with particular emphasis on conditions leading to total disconnectedness. We prove that the set of parameter sequences for which the Julia set is totally disconnected is dense in the parameter space. We also construct examples where the Julia set is totally disconnected but the associated non-autonomous system is not hyperbolic. Finally, under suitable probabilistic assumptions on the parameter distribution, we show that almost every sequence produces a totally disconnected Julia set.
	
%%In nature and in many mathematical models, the parameters of a system are not fixed but may vary randomly. Random complex dynamics provides a natural framework for studying how randomness influences the global behaviour of orbits and the structure of Julia sets. In this paper 	 we aim to investigate the topological and dynamical properties of a family of random cubic polynomials. Specifically, in this paper we study some conditions for which the Julia set of family of random cubic polynomials $z^3+c_n z$ is totally disconnected where the sequence $(c_n)$ is chosen randomly from a bounded Borel set in $\mathbb{C}$.
	\end{abstract}	
	
	\maketitle	
	
	%%%%%%%%%%%%%%%%%%%%%%%%%%%%%%%%%%%
	%\tableofcontents
	%%%%%%%%%%%%%%%%%%%%%%%%%
	\section{Introduction}%\label{sec:introduction}
The concept of Julia-Fatou sets plays a fundamental role in the study of complex dynamics, particularly in understanding the intricate dynamical behavior of polynomials and rational functions. There are several natural phenomena whose mathematical models can be locally conjugated to complex polynomials. In such models there is always some noise, for example, nonlinear wave propagation in dispersive media or certain security-related aspects of 5G systems, especially in the study of complex dynamical systems and noise and disturbance analysis.

When examining the dynamics of cubic polynomials, the Julia set can exhibit a diverse range of structures, from connected fractal curves to totally disconnected patterns. In recent years, there has been growing interest in the study of Julia sets of random polynomials, where the coefficients are randomly selected from a predefined or controlled set. This probabilistic approach unveils new dimensions in the behavior of these sets, introducing randomness into their formation and structure.

Specifically, for families of random cubic polynomials, the investigation into their Julia sets provides fascinating insights into the interplay between randomness and dynamical properties.

A particularly intriguing phenomenon observed in Julia sets of random cubic polynomials is total disconnectedness. This means that the Julia set consists of an uncountable number of points, sometimes resembling a Cantor set. Studying totally disconnected Julia sets not only deepens our understanding of fractal geometry, but also sheds light on the complex nature of dynamical systems under the influence of randomness.

The study of iterations of random non-autonomous rational maps originated in the seminal paper \cite{FS} by Fornæss and Sibony. Since then, many mathematicians have worked on this subject (see for instance \cite{BB,B1,Co1,Co2,FS,chines,chines2,totall}).  A systematic study of the dynamical properties of random non-autonomous dynamical systems of quadratic polynomials was done by Brück, Büger and Reitz; see \cite{Br,Br2,B1}. The behavior of critical points of a random family of polynomials plays an important role in the study of non-autonomous family of polynomials. 

Because of many interesting applications that a certain cubic family of polynomials have, we decide to study some topological and dynamical properties of this family in a series of papers. To be more precise, in these series of papers we consider the family of cubic polynomials,
\begin{equation}\label{eq1}
f_{\omega}(z)=z^3+c_nz,
\end{equation}
where the sequence $\omega=(c_n)$ is chosen randomly from some bounded Borel subset $\Omega$, where $\Omega = W^{\mathbb{N}}$ and $W$ is a bounded Borel subset of $\mathbb{C}$. Throughout this work, we will often consider $\Omega = D_{\delta}^{\mathbb{N}}$, where $D_{\delta}$ denotes the open disk in $\mathbb{C}$ of radius $\delta > 0$.

In discrete dynamical systems, the iterations of a point reveal many outstanding information about the dynamic (i.e, our model) itself.  
In random systems, for a given sequence of complex parameters $\omega=(c_n)$ the iteration of a point is defined by the composition 
\begin{equation}\label{eq:Fn}
f^n_{\omega}(z) = f_{c_n}\circ \dots \circ f_{c_1}(z).
\end{equation}
It is worth mentioning that if the sequence $(c_n)$ is constant, i.e., $c_n = c$ for all $n\in\mathbb{N}$, then the above iteration is basically the classic well-known iteration of the polynomial $f_c$. We devote \S\ref{sec:classic} to some preliminary results on the classic dynamic of cubic polynomials.  

For a sequence of complex parameters $\omega=(c_n)\in\Omega$, we consider the non-autonomous composition of functions $f_{c_n}$ given in \eqref{eq1} and its $n^{\text{th}}$ iterate of a point $z\in\widehat{\mathbb{C}}=\mathbb{C}\cup\{\infty\}$, as given in \eqref{eq:Fn}.  Furthermore, set $f_{\omega}=f_{c_1}$. 

The definitions of Julia set (resp. Fatou set), denoted by $\mathcal{J}_{\omega}$ (resp. $\mathcal{F}_{\omega}$) are naturally generalized to this setting. To be more precise, the Fatou set is the set of all $z\in \widehat{\mathbb{C}}$ such that the family $\left(f_{\omega}^n\right)$ is normal (in the sense of Montel) in some neighborhood of $z$. The complement of the Fatou set is called the Julia set. A component of $\mathcal{F}_{\omega}$ is called a stable domain, or a Fatou component. The filled Julia set $\mathcal{K}_{\omega}$ is by definition the set of all $z\in \mathbb{C}$ whose orbit of $\left(f_{\omega}^n\right)$ is bounded. Clearly, the set $\mathcal{J}_{\omega}$ is compact and  $\mathcal{F}_{\omega}$ is open, both in $\widehat{\mathbb{C}}$. Additionally, we have $\partial\mathcal{K}_{\omega} =\mathcal{J}_{\omega}$, see \cite{BB,FS} for instanse. Also $\mathcal{A}_{\omega}(\infty) = \widehat{\mathbb{C}} \setminus \mathcal{K}_{\omega}$ is the basin of infinity corresponded to the sequence $\omega$. A domain $M_{\omega}$ is said to be invariant with respect to $f_{\omega}$, if 
$f_{c_k}(M_{\omega})\subset M_{\omega} \mbox{ for all } k\in\mathbb{N}$.

Our focus in this paper is the study of connectedness (in particular totally disconnectedness) of the Julia set $\mathcal{J}_{\omega}$ of the family of non-autonomous cubic polynomials \eqref{eq1}. This subject has been widely investigated for a random family of quadratic polynomials $z^2+c_n$ where $(c_n) \in\Omega$, see for instance \cite{BH,Br,Br2,BB,B1,CCV,chines,chines2,totall}. The lack of a deep study for the family of random cubic polynomials given in \eqref{eq1} motivated us to focus on this topic. 

It is worth emphasizing that the behavior of the critical points of a classical or random family of polynomials plays a fundamental role in the study of autonomous and non-autonomous families of polynomials (for the case of autonomous dynamics see \S\ref{sec:classic}). 

The connectedness and totally disconnectedness of the Julia set of a  special random family of polynomials, which are called {\it bounded hyperbolic}, is well studied by many authors (for definitions and more details see \cite{Co1,Co2} for example). 

The non-autonomous setting is too broad, allowing pathological behaviors of Julia sets. Thus, Comerford restricts attention to sequences $\{f_m\}_{m=1}^{\infty}$ of polynomials 
with uniformly bounded degrees ($2 \le d_m \le d$) and uniformly bounded coefficients, see \cite{Co1}. Such sequences are called \emph{bounded sequences of polynomials}. Moreover, Comerford  generalized the definition of hyperbolicity to the non-autonomous setting, as follows.

\begin{definition}\label{hyperbolic}
Let $\omega = (f_n)_{n \ge 1}$ be a sequence of polynomials on $\mathbb{C}$ and denote by 
\[
F_{m,n} = f_{m+n} \circ f_{m+n-1} \circ \cdots \circ f_{m+1}, \qquad F_n = F_{0,n}.
\]
We say that the sequence $\omega$ (or the corresponding non-autonomous dynamical system) is 
\emph{hyperbolic} if there exist constants $\lambda > 1$ and $C > 0$ such that
\[
\big|(F_{m,n})'(z)\big| \;\ge\; C\,\lambda^n
\quad\text{for all } z \in J_m,\; \text{and all } m,n \ge 0,
\]
where $J_m$ denotes the Julia set at step $m$.
\end{definition}
In other words, $\omega$ is hyperbolic if the derivative growth is uniformly expanding on the sequence of Julia sets $\{J_m\}$. In \cite{Co2}, Comeford proves the following important results:

\begin{teo*}
	If all the critical points of a bounded hyperbolic polynomial sequence $\{F_n\}_{n=1}^{\infty}$, where $F_n(z) = f_{c_n}\circ\dots\circ f_{c_1}$, escape to infinity under iteration,
	\begin{itemize}
		\item[(1)]  then there are no bounded Fatou components (\cite[Theorem 1.2]{Co2});
		\item[(2)]  then all iterated Julia sets are totally disconnected (\cite[Theorem 1.3 ]{Co2}).
	\end{itemize}
\end{teo*}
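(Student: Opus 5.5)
Both parts of the theorem have the same setup. Assume $\{f_n\}$ is bounded and hyperbolic, and that every critical point escapes. Boundedness gives a radius $R>0$ such that $|z|>R$ implies $|f_n(z)|>2|z|$ for all $n$. Consequently, for every $m$ the closed disc $\overline{D_R}$ contains the filled Julia set $K_m$ at step $m$. Since the degrees are uniformly bounded and each critical point eventually leaves $\overline{D_R}$ (and then stays outside), we can choose $N$ so that all critical points of $F_{m,N}$ lie outside $\overline{D_R}$. The argument I would give needs this $N$ independent of $m$. I expect hyperbolicity to supply that uniformity: a critical point lying in $\overline{D_R}$ after many steps would stay close to the Julia set, which contradicts the uniform expansion $|(F_{m,n})'|\ge C\lambda^n$ on $J_m$. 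Getting this uniformity is the step I expect to be the main obstacle. I would try to derive it from the fact that the Green's functions $g_m$ are uniformly Hölder continuous and that the critical values of $g_m$ are uniformly bounded below.

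\textbf{Part (2).} Let $D=D_R$. The preimage $F_{m,N}^{-1}(D)$ is contained in $D$, and $F_{m,N}$ has no critical values in $D$. Hence $F_{m,N}^{-1}(D)$ consists of $\deg F_{m,N}$ disjoint topological discs $U_1,\dots,U_k$, and each branch $F_{m,N}\colon U_i\to D$ is a conformal isomorphism. Iterating this in blocks of length $N$ gives, for each $j$, a family of inverse branches $\phi\colon D\to D$ whose images cover $K_m$. Every image of $D$ under a length-$jN$ composition is compactly contained in $D$, because each $U_i\Subset D$. By the Schwarz–Pick lemma each block contracts the hyperbolic metric of $D$ on $F^{-1}(D)$ by a factor $\kappa<1$, and $\kappa$ is uniform because the family of maps is bounded. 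It follows that the nested pieces have Euclidean diameters tending to $0$. Each connected component of $K_m$ lies in a nested sequence of such pieces, so it is a single point. Therefore $J_m=K_m$ is totally disconnected. Hyperbolicity is used only to produce the uniform $N$ and the uniform contraction; alternatively, the derivative bound gives the diameter estimate directly through the Koebe distortion theorem.

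\textbf{Part (1).} This follows from the same construction. A bounded Fatou component $U$ at step $m$ would be an open subset of $K_m$. But by the argument for part (2), $K_m$ is a totally disconnected compact set, so it has empty interior and no bounded Fatou component can exist. If one wants a direct proof that avoids part (2), use the non-autonomous maximum principle instead. On a bounded Fatou component, the iterates $F_{m,n}$ are uniformly bounded. Repeated lifting through the disc pieces $U_i$ then forces $U$ into pieces of arbitrarily small diameter, and this is impossible for an open set.
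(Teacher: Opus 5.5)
The paper gives no proof of this statement. It is quoted from Comerford's Theorems 1.2 and 1.3, so your argument has to stand on its own, and it has a genuine gap.

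\textbf{The block step of Part (2) is false.} Since $F_{m,N}=f_{m+N}\circ F_{m,N-1}$ and $F_{m,N-1}$ maps onto $\mathbb{C}$, every critical value of the single map $f_{m+N}$ is a critical value of $F_{m,N}$. So ``$F_{m,N}$ has no critical values in $D=D_R$'' would force every map $f_k$, $k\ge N$, to send its own critical points out of $\overline{D_R}$ in one step. A uniform escape time would not help, because the critical points of $f_{m+N-1}$ and $f_{m+N}$ have only had one or two steps. This already fails in Brolin's autonomous Cantor case $c_n\equiv-3.1$. There, your requirement $|z|>R\Rightarrow|f(z)|>2|z|$ forces $R\ge\sqrt{5.1}\approx2.26$, while the critical values of $f_{-3.1}$ are $\approx\pm2.10$. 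So the conformal pieces $U_i\to D$ do not exist. Part (1), which you obtain from Part (2) or from the same lifting through the $U_i$, inherits the gap.

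\textbf{The uniform $N$ does not exist in general.} So this step is false, not merely hard. Take $c_n\in\{\tfrac32,9\}$: blocks of $\tfrac32$'s of lengths $L_k\to\infty$ (all $\ge L_0$), separated by single $9$'s.
\begin{itemize}
\item The critical points $\pm i/\sqrt2$ of $f_{3/2}$ are superattracting fixed points of $f_{3/2}$.
\item $f_9$ sends them to $\pm\tfrac{17}{2\sqrt2}\,i$, beyond the escape radius $\sqrt{10}$. The critical values $\pm6\sqrt3\,i$ of $f_9$ also escape, so every critical point escapes.
\item The Julia sets avoid fixed neighbourhoods of $\pm i/\sqrt2$ and $\pm i\sqrt2$. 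Off these, $f_{3/2}$ uniformly expands the hyperbolic metric of $\mathbb{C}\setminus\{\pm i/\sqrt2\}$.
\item $|f_9'|>8$ on the Julia sets at the times $f_9$ acts.
\end{itemize}
So for $L_0$ large the sequence is bounded and hyperbolic. Yet at the start of a block of length $L_k$, the critical points stay at $\pm i/\sqrt2\in D_R$ for $L_k$ steps. Moreover $g_m(\pm i/\sqrt2)=3^{-L_k}g_{m+L_k}(\pm i/\sqrt2)\to0$, so your proposed route via a uniform lower bound on $g_m$ at critical points fails too.

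Such a bound would in fact make hyperbolicity superfluous; it is the mechanism of Theorem~\ref{teo:Ak}(a). The real content of the theorem is controlling critical points that escape arbitrarily slowly, and your proof never uses hyperbolicity for that.

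\textbf{The Koebe alternative does not close the gap.} Univalent pullbacks of discs of fixed radius around points of $J_{m+n}$ (which you would first have to justify) shrink like $\lambda^{-n}$. But this bounds a component of $K_m$ only if the corresponding component of $K_{m+n}$ fits inside such a disc. Hyperbolicity alone cannot give that: $c_n\equiv0$ is hyperbolic with Julia set the unit circle.

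Escape has to be used qualitatively, for instance to make $F_{m,n}$ univalent on a putative bounded Fatou component, and then played off against the uniform expansion near the Julia sets.
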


In the Example \ref{exe1}, we exhibit a bounded sequence of polynomials from family \eqref{eq1} whose criticals points escape to infinity, but whose filled Julia set contains an interval. It followed that the Julia set for such a sequence is not totally disconnected, so in view of Theorem 1.3 de \cite{Co2}, such a sequence cannot be hyperbolic.

\begin{exa}\label{exe1}
	We construct a bounded sequence $\omega \in \mathbb{C}$ whose corresponding 
	Julia set is disconnected but not totally disconnected. 
	Set $c_1 = -2$ and $c_n = 0$ for all positive integers $n \geq 2$. 
	The critical points of $f_{c_1}$ are $$z_{+} = +\sqrt{\tfrac{2}{3}}, \qquad z_{-} = -\sqrt{\tfrac{2}{3}}.$$ 
    
    In this case, $f_{c_1}(z_-) \cong 1.088>1$. Since the following steps of the map are given by $z \mapsto z^{3}$, we have that $$|f^n_{\omega}(z_-)| \to \infty,$$ 
	as $n \to \infty$. Hence, by Theorem \CJS~ (see page 6), the Julia set is disconnected. 
	Furthermore, we have $f_{c_1}\left((0, \tfrac{1}{2})\right) \subset (-1, 0)$ and 
	$f^n_{\omega}\left((0, \tfrac{1}{2})\right) \subset (-1, 0)$ for 
	all $n \in \mathbb{N}$. Indeed, on the interval $(0,\tfrac{1}{2})$ the function 
\[
x \mapsto x^{3} - 2x
\]
is strictly decreasing; it maps $0$ to $0$ and $\tfrac{1}{2}$ to $-\tfrac{7}{8}$. So, 
\[
f_{c_{1}}\bigl((0,\tfrac{1}{2})\bigr) = \bigl(-\tfrac{7}{8},0\bigr) \subset (-1,0).
\]
For $y \in (-1,0)$ we have $y^{3} \in (-1,0)$ and $|y^{3}| < |y|$; therefore, under forward iterations $y \mapsto y^{3k} \to 0$. We conclude that 
\[
(0,\tfrac{1}{2}) \subset K_{\omega}.
\]
Since $K_{\omega}$ contains a non-degenerate real interval, its boundary $J_{\omega}$ cannot be totally disconnected. We have shown that $J_{\omega}$ is disconnected and not totally disconnected. This phenomenon was already observed in the quadratic case by Brück \cite{ Br} and Brück–Büger–Reitz \cite{Br2}.
\end{exa}

\section{An example totally disconnected and non hyperbolic}

We present an example of a sequence of cubic polynomials that is totally disconnected but not hyperbolic in the sense of Definition \ref{hyperbolic}.
The idea of the proof is as follows. 
We consider highly expanding blocks, which force a Cantor-like structure with isolated steps in which the derivative is close to one. 
The long blocks ensure that the Julia set is totally disconnected, while the infinite presence of these neutral times prevents the uniform expansion required by hyperbolicity. 
This results in a totally disconnected but non-hyperbolic Julia set within of the family \eqref{eq1}. 
This mechanism resembles the Pliss lemma from the theory of non-uniformly hyperbolic diffeomorphisms.

While the classical lemma guarantees the existence of infinitely many hyperbolic times under average expansion, our construction considers infinitely many near-parabolic steps within arbitrarily long expanding blocks. 
As a result, the system remains totally disconnected but breaks the hyperbolicity. See, for instance, \cite{Barreira-Pesin}.

\begin{definition}[Bounded non-autonomous system]\label{def:bounded}
Let $S=(f_n)_{n\ge1}$ be a sequence of rational maps on $\mathbb{\widehat{C}}$. 
We say that $S$ is \emph{bounded} if there exists $d_\ast\in\N$ and a compact set 
$\mathcal{F}\subset \mathrm{Rat}_{\le d_\ast}$ (in the usual topology of coefficients, after a fixed normalization) such that 
$f_n\in\mathcal{F}$ for all $n$. 
Equivalently, the degrees are uniformly bounded, $2\le \deg(f_n)\le d_\ast$, and the normalized coefficients of $f_n$ remain in a compact set (no degeneration to constants).
\end{definition}

\begin{rem}
For polynomials, one may normalize (e.g. monic and centered, or any fixed affine normalization) 
and require that all coefficients lie in a compact set; this is equivalent to boundedness above. 
In particular, boundedness gives uniform distortion, compactness over domains used in Markov blocks.
\end{rem}

\begin{exa}[Cubic family]\label{exa:bounded-cubic}
In the family $f_{\omega}(z)=z^3+c_nz$, a sequence $(f_{c_n})$ is bounded if and only if the parameter sequence $(c_n)$ is bounded in $\mathbb{C}$, i.e.\ $\sup_n |c_n|<\infty$. 
\end{exa}

\begin{definition}[Near-parabolic step in non-autonomous dynamics]\label{def:nearparabolic-general}
Let $S=(f_n)_{n\ge1}$ be a bounded sequence of rational maps on $\mathbb{\widehat{C}}$, and write $F_{m,n}=f_{m+n}\circ\cdots\circ f_{m+1}$ and $J_m$ for the stage-$m$ Julia set.
We say that an index $m\in\N$ is a \emph{near-parabolic step} if there exists $z_m\in J_m$ with
\[
\big|f'_m(z_m)\big| \;=\; 1+\varepsilon_m
\quad\text{for some } \varepsilon_m>0 \text{ small}.
\]
We say that $\omega$ has \emph{infinitely many near-parabolic steps} if there exists an infinite set 
$\{m_k\}_{k\ge1}$ with $\varepsilon_{m_k}\to0$.
\end{definition}

The terminology ``near-parabolic'' is standard in complex dynamics to describe maps whose local behavior approaches the parabolic regime $|f'(z)|=1$ without attaining its value. Here it singles out non-expanding instants along the sequence that are arbitrarily close to neutrality on $J_m$.

\begin{prop}[Near-parabolic steps destroy uniform hyperbolicity]\label{prop:np-not-hyp}
Let $S=(f_n)$ be a bounded non-autonomous system as above. 
If $S$ contains infinitely many near-parabolic steps, then $\omega$ is not hyperbolic.
\end{prop}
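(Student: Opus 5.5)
The plan is to argue by contradiction, testing the hyperbolicity inequality of Definition~\ref{hyperbolic} on single steps ($n=1$) at the near-parabolic indices. Suppose $\omega$ were hyperbolic, with constants $\lambda>1$ and $C>0$, so that $|(F_{m,n})'(z)|\ge C\lambda^n$ for all $z\in J_m$ and all $m,n\ge0$. Let $\{m_k\}$ be the infinite set of near-parabolic steps of Definition~\ref{def:nearparabolic-general}. There are points $z_{m_k}\in J_{m_k}$ with $|f'_{m_k}(z_{m_k})|=1+\varepsilon_{m_k}$ and $\varepsilon_{m_k}\to0$.

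First I would align the indices. With $n=1$ we have $F_{m,1}=f_{m+1}$, and the hyperbolicity inequality is evaluated on $J_m$. So I would use the index $m=m_k-1$ and the fact that the Julia sets are invariant along the sequence, namely $f_{m}(J_{m-1})=J_{m}$ and $f_m^{-1}(J_m)=J_{m-1}$, which holds for bounded polynomial sequences (\cite{Co1,FS}). The point at which the step-$m_k$ map is evaluated should be a point of the Julia set on which $f_{m_k}$ acts. With the paper's convention, $z_{m_k}$ is that point, and the $n=1$ hyperbolicity estimate applies at it. This gives
\[
C\lambda\;\le\;|f'_{m_k}(z_{m_k})|\;=\;1+\varepsilon_{m_k}\qquad\text{for every }k .
\]
Letting $k\to\infty$ and using $\varepsilon_{m_k}\to0$ yields $C\lambda\le1$.

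The second step, which I expect to be the main obstacle, is turning $C\lambda\le 1$ into a genuine contradiction without changing the statement. I would use the near-parabolic steps at every scale, not only for $n=1$. By the chain rule,
\[
(F_{m,n})'(z)=\prod_{j=1}^{n} f'_{m+j}\bigl(F_{m,j-1}(z)\bigr),
\]
and boundedness (Definition~\ref{def:bounded}) gives a uniform bound $|f'_j|\le M$ on a fixed disk containing every $J_j$. I would then read the definition of near-parabolicity with ``$\varepsilon_m$ small'' meaning that the neutrality is arbitrarily strong relative to any proposed expansion rate. Concretely, given $\lambda>1$, there is a $k$ with $1+\varepsilon_{m_k}<\lambda$. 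At that step the local expansion factor is strictly below the rate $\lambda$ that the definition demands to hold uniformly in $m$.

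The argument I would try first is to show that uniform expansion at rate $\lambda$, uniformly in the starting time $m$, forces a per-step lower bound at the rate $\lambda$ itself along the Julia sets. The idea is to restart the composition at time $m_k-1$: since $C$ is independent of $m$, the multiplicative constant cannot absorb a neutral factor occurring infinitely often. Combined with $1+\varepsilon_{m_k}\to1<\lambda$, this would contradict hyperbolicity. I flag this as the delicate point. The single-step estimate alone only yields $C\lambda\le1$, so the proof must extract the contradiction from the fact that the near-neutral factors appear at infinitely many times while $C$ and $\lambda$ are uniform over all $m$ and $n$.
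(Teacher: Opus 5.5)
Your first step is exactly the paper's argument: test Definition~\ref{hyperbolic} with $n=1$ at the near-parabolic times. You are right that this only yields $C\lambda\le 1$, which is not a contradiction. Definition~\ref{hyperbolic} asks only for \emph{some} $C>0$, and the case $n=0$ (where $F_{m,0}=\mathrm{id}$) even forces $C\le 1$.

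The paper's proof passes over exactly this point. It reads $|f'_{m_k}(z_{m_k})|<\lambda$ as contradicting the lower bound $C\lambda$, which tacitly takes $C\ge 1$. It also identifies $F_{m_k,1}$ with $f_{m_k}$, although $F_{m_k,1}=f_{m_k+1}$. Your realignment does not settle the indexing either, since $f_{m_k}(J_{m_k-1})=J_{m_k}$ does not put $z_{m_k}$ in $J_{m_k-1}$.

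The repair you sketch cannot work. Uniform expansion does not force a per-step factor $\ge\lambda$, or even $\ge 1$, because $C$ is paid once per window $[m,m+n]$, not once per slow step. Suppose $|f'|\ge 1$ on the relevant Julia set at the near-parabolic times, these times are at least $L\ge 2$ apart, and $|f'|\ge\Lambda>1$ at all other times. Then a window of length $n$ contains at most $n/L+1$ slow steps, so $|(F_{m,n})'|\ge\Lambda^{-1}\bigl(\Lambda^{1-1/L}\bigr)^n$, which is hyperbolic. Isolated near-neutral factors recurring infinitely often are absorbed, the opposite of what your last paragraph needs.

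In fact no argument can close the gap: under Definitions~\ref{hyperbolic} and~\ref{def:nearparabolic-general} the statement is false. Take $f_n(z)=a_nz^2$ with $a_n=1$ except $a_{n_k}=\tfrac14$, where $n_{k+1}-n_k\to\infty$; this is a bounded sequence. Here $F_{m,n}(z)=A_{m,n}z^{2^n}$, and $J_m$ is the circle $|z|=r_m$ with $\log r_m=-\sum_{j\ge1}2^{-j}\log|a_{m+j}|\in[0,\log 4]$. Since $F_{m,n}(J_m)=J_{m+n}$,
\[
|(F_{m,n})'(z)|=2^n\,\frac{|F_{m,n}(z)|}{|z|}=2^n\,\frac{r_{m+n}}{r_m}\ge\tfrac14\,2^n\qquad(z\in J_m),
\]
so the sequence is hyperbolic with $\lambda=2$, $C=\tfrac14$. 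Yet $r_{n_k-1}=2r_{n_k}^{1/2}$, so on $J_{n_k-1}$ one has $|f'_{n_k}|=2r_{n_k}/r_{n_k-1}=r_{n_k}^{1/2}=1+\varepsilon_k$ with $0<\varepsilon_k\to0$. If you insist on the literal indexing $z_m\in J_m$ of Definition~\ref{def:nearparabolic-general}, take $a_{n_k}=\tfrac12$ instead. The sequence is still hyperbolic, now with $C=\tfrac12$, and $|f'_{n_k}|=r_{n_k}=1+\varepsilon_k'$ on $J_{n_k}$ with $0<\varepsilon'_k\to0$.

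What a single-step argument does prove is that \emph{near-critical} steps destroy hyperbolicity: $z_k\in J_{m_k}$ with $|f'_{m_k+1}(z_k)|\to 0$ is incompatible with $|(F_{m_k,1})'(z_k)|\ge C\lambda>0$. To obtain non-hyperbolicity from near-neutrality, you need it along orbit segments of unbounded length. For example, $z_k\in J_{m_k}$ and $n_k\to\infty$ with $|(F_{m_k,n_k})'(z_k)|^{1/n_k}\to 1$ suffices.
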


\begin{proof}
Fix any $\lambda>1$ and $C>0$. Choose $k$ large so that $|f'_{m_k}(z_{m_k})|=1+\varepsilon_{m_k}<\lambda$
with $z_{m_k}\in J_{m_k}$ from Definition~\ref{def:nearparabolic-general}. Then
$\big|(F_{m_k,1})'(z_{m_k})\big|=|f'_{m_k}(z_{m_k})|<\lambda$, contradicting any lower bound
$C\lambda$ at time $n=1$. Since such $m_k$ occur infinitely often with $\varepsilon_{m_k}\to0$,
no uniform expansion factor $\lambda>1$ can hold along the tail. Hence the system is not hyperbolic.
\end{proof}

\begin{prop}[Tail-imposed Cantor structure via Markov blocks]\label{prop:tail-cantor}
Assume there exist infinitely many \emph{Markov blocks} indexed by $k$, that is, intervals 
$[N_k,N_k+L_k)$ with $L_k\to\infty$ such that for each $n$ in the block, $f_n:U\to V$ is a Markov map (in the sense of a topological disk $V$, a finite disjoint union $U=\bigsqcup_j U_j\Subset V$ (here, $\Subset$  means compactly contained), and each restriction $f_n:U_j\to V$ univalent onto). Then for each $k$ there exists a compact $C_k\subset J_{N_k}$ homeomorphic to a Cantor set, and the diameters of level-$1$ cylinders in $C_k$ tend to $0$ as $k\to\infty$.
Consequently, the non-autonomous Julia set $J_\omega$ is totally disconnected (a Cantor set).
\end{prop}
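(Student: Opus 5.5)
The plan is to run the standard symbolic-coding argument for Cantor repellers inside each Markov block, and then let the block lengths $L_k\to\infty$ drive the cylinder diameters to zero. Throughout I fix the data of the statement: $V$ a topological disk, $U=\bigsqcup_{j=1}^{p}U_j\Subset V$ with $p\ge 2$ (otherwise there is no branching), and for every $n$ in a block the restriction $f_n:U_j\to V$ is univalent onto. Boundedness of $S$ (Definition \ref{def:bounded}) is used only to get uniform constants. Since the maps change with $n$, I will read the statement as saying that the Markov structure is compatible along the block: the pieces at time $n$ lie in the $V$ of time $n$, and these pieces are mapped onto the next $V$.

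\emph{Step 1 (contraction of inverse branches).} For each $n$ in the block and each $j$, let $g_{n,j}=(f_n|_{U_j})^{-1}:V\to U_j\Subset V$. By the Schwarz--Pick lemma applied to the hyperbolic metric $\rho_V$, each $g_{n,j}$ is a weak contraction. Since $U_j\Subset V$, it is actually a uniform strict contraction on compact subsets: $\|Dg_{n,j}\|_{\rho_V}\le\theta<1$ on $\overline U$. The constant $\theta$ depends only on $\operatorname{mod}(V\setminus\overline U)$. I would take this modulus to be bounded below uniformly in $n$ and $k$, using boundedness (compactness of $\mathcal F$) or an explicit choice of $U,V$ for the cubic family.

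\emph{Step 2 (the Cantor set $C_k$).} For a word $(j_1,\dots,j_{L})$ with $L\le L_k$, I define the cylinder
\[
U_{j_1\cdots j_L}=g_{N_k+1,j_1}\circ\cdots\circ g_{N_k+L,j_L}(V).
\]
These sets are nested, and cylinders with distinct words of the same length are disjoint. By Step 1, their $\rho_V$-diameter is at most $\theta^{L-1}\operatorname{diam}_{\rho_V}(\overline U)$. I then set $C_k=\bigcap_{L\le L_k}\bigcup_{|w|=L}\overline{U_w}$. This set is only a finite-depth approximation, so to obtain a genuine Cantor set I continue with the later blocks, using the same $V$ and allowing arbitrary intermediate compositions. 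Then $\bigcap_L$ over infinite words gives a set in bijection with $\{1,\dots,p\}^{\mathbb N}$, and this bijection is a homeomorphism because the diameters shrink along every block.

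To show $C_k\subset J_{N_k}$, take any point of $C_k$. Every neighbourhood of it contains a cylinder $U_w$, and $F_{N_k,|w|}$ maps $U_w$ onto $V$, which contains points of both $\mathcal K$ and its complement (say $V\supset J$ by construction). Hence normality fails there, by the same reasoning as in the equality $\partial\mathcal K_\omega=\mathcal J_\omega$.

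The reading ``the level-$1$ diameters tend to $0$'' is interpreted as: the cylinders of depth $L_k$ pulled back to stage $N_1$ have diameter $\lesssim\theta^{L_k}\to0$.

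\emph{Step 3 (total disconnectedness).} Let $E\subset J_\omega$ be a connected set with more than one point, and let $\eta=\operatorname{diam}E>0$. Pull the cylinder partition of block $k$ back to stage $0$ via $F_{N_k}$. This map has bounded degree on each step, so the pull-back consists of finitely many components. Every point of $J_\omega$ lies in one of these pieces, because points outside $F_{N_k}^{-1}(\overline U)$-type sets escape. By Step 1 and the Koebe distortion theorem applied along univalent branches, each piece has diameter at most $M_k\theta^{L_k}$. The pieces are pairwise disjoint compact sets, so the connected set $E$ must lie in a single piece. For large $k$ this contradicts $\eta>0$.

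The main obstacle is the factor $M_k$, which comes from the non-Markov steps between blocks. Those steps are only uniformly Lipschitz (bounded coefficients), while the inverse branches there may expand. I would first try to pass the pull-back through $F_{N_k}$ using the bounded-degree branching only topologically, which keeps the pieces disjoint, and then get smallness from the fact that $L_k\to\infty$ with $L_k$ chosen to dominate $N_k$ (e.g.\ $\theta^{L_k}\Lambda^{N_k}\to0$, with $\Lambda$ the Lipschitz bound of inverse branches on a neighbourhood of $J$). If this growth condition cannot be extracted from the hypothesis, I would add it as an explicit choice of blocks in the construction of the example.
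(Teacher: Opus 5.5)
\textbf{The gap is the diameter bound $M_k\theta^{L_k}$ for the pieces pulled back through $F_{N_k}$ in Step~3, and it cannot be closed, because the final assertion of the proposition is false under the stated hypotheses.}

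Koebe does not apply to $F_{N_k}$, since it may have critical points near $\mathcal J$, and a few inter-block steps can undo the expansion of a whole block. Here is a counterexample with $c_n\in\{0,10\}$.

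\emph{The Markov map.} $f_{10}$ is Markov on $V=D_{\sqrt{20}}$. For $|z|\ge\sqrt{20}$ one has $|f_{10}(z)|\ge|z|(|z|^2-10)>\sqrt{20}$. The critical values $\pm\frac{20}{3}i\sqrt{10/3}$ have modulus about $12.2$. Hence $f_{10}^{-1}(V)$ consists of three disks compactly contained in $V$, each mapped univalently onto $V$.

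\emph{The sequence.} Alternate blocks of $L_k$ steps with $c_n=10$ and single steps with $c_n=0$. Since $|f_{10}(z)|\le 11|z|$ on $D_1$, a block of length $L\le t$ maps $D_{11^{-t}}$ into $D_{11^{L-t}}$. The following step $z\mapsto z^3$ then maps this into $D_{11^{-3(t-L)}}$. Take $t_1=4$, $L_k=\lfloor t_k/2\rfloor$ and $t_{k+1}=3(t_k-L_k)\ge\frac32 t_k$. Then the orbit of $D_{11^{-4}}$ stays in $D_1$ forever, while $L_k\to\infty$.

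\emph{Conclusion.} $\mathcal K_\omega$ has interior, so $\widehat{\mathbb C}\setminus\mathcal J_\omega=\operatorname{int}\mathcal K_\omega\sqcup\mathcal A_\omega(\infty)$ is disconnected. By Proposition~\ref{pro1}, $\mathcal J_\omega$ is not totally disconnected.

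In this example your Step~1--2 structure is fully present at the start of each block. Since $\sqrt{20}>\sqrt{11}$, all of $\mathcal K$ stays in $V$ during the block, so the Julia set there is covered by depth-$L_k$ cylinders of size $O(\theta^{L_k})$. What fails is exactly the pull-back: $z\mapsto z^3$ near its critical point shrinks radius $r$ to $r^3$. The same folding breaks the full-shift coding you use in Step~2 to continue $C_k$ through later blocks. The paper's own proof has the same hole: it simply asserts that transporting the block's Cantor set back through the inter-block steps keeps an ``exponentially small mesh''.

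\textbf{Your proposed repair does not work either.} A uniform Lipschitz bound $\Lambda$ for inverse branches near $\mathcal J$ need not exist, because $\mathcal J$ can accumulate at critical values of the non-Markov maps. In the example, the Fatou component of $0$ at the start of block $k$ lies in a depth-$L_k$ cylinder. So its boundary, which lies in $\mathcal J$, converges to the critical value $0$ of $z\mapsto z^3$, where inverse branches behave like $w^{1/3}$.

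Replacing the single steps $c_n=0$ by $m_k=k$ consecutive ones gives $t_{k+1}=3^{k}(t_k-L_k)$ and $L_k/N_k\to\infty$. Then $\theta^{L_k}\Lambda^{N_k}\to0$ holds for every finite $\Lambda$, and still the conclusion fails.

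A growth condition can only succeed if it beats the degree $3^{N_k}$ rather than a Lipschitz constant. For instance, assume $\mathcal K_n\subset V$ for all $n$ and $L_k/(N_k3^{N_k})\to\infty$. $F_{N_k}$ is monic of degree $d=3^{N_k}$, so root counting shows every component of $F_{N_k}^{-1}(\overline{D(w,\epsilon)})$ has diameter at most $2d\,\epsilon^{1/d}$. With $\epsilon\lesssim\theta^{L_k}$ this tends to $0$, and your Step~3 goes through. Such a hypothesis must be added to the statement; it does not follow from it. Alternatively, one could add a hypothesis controlling the critical points of the inter-block maps, e.g.\ requiring the Markov property at every step.
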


\begin{proof} By the Markov-map theorem (see, for instance \cite{deFaria}), each block induces an IFS of $3^{L_k}$ inverse branches on $V$ that is uniformly contractive in the hyperbolic metric. Its attractor is a Cantor set $K_k\subset U$, 
topologically conjugate to a (sub)shift. Transporting $K_k$ back to the start of the block through the finitely many inter-block steps gives a compact $C_k\subset J_{N_k}$ with the same clopen partition and exponentially small mesh. As $L_k\to\infty$, meshes $\to0$ arbitrarily deep in the tail, yielding total disconnectedness.
\end{proof}
\begin{figure}[ht]
\centering
\begin{tikzpicture}[scale=0.8,>=Stealth,thick]

% ==== Time axis (upper part) ====
\draw[->,thick] (0,0) -- (13,0) node[below right] {$n$ (time)};
\draw[->,thick] (0,0) -- (0,3.0) node[left] {$|f'_n|$};

% Hyperbolic (Markov) blocks
\fill[blue!15] (0.3,0) rectangle (2.5,2.5);
\fill[blue!15] (3.5,0) rectangle (7.0,2.5);
\fill[blue!15] (8.0,0) rectangle (12.3,2.5);

\draw[blue,thick] (0.3,2.5)--(2.5,2.5);
\draw[blue,thick] (3.5,2.5)--(7.0,2.5);
\draw[blue,thick] (8.0,2.5)--(12.3,2.5);

\node[blue!70!black] at (1.4,2.8) {\scriptsize Block 1};
\node[blue!70!black] at (5.2,2.8) {\scriptsize Block 2};
\node[blue!70!black] at (10.0,2.8) {\scriptsize Block 3};

% Near-parabolic steps
\foreach \x in {3.0,7.5,12.6}{
  \draw[red!70!black,thick] (\x,0)--(\x,1.0);
  \fill[red!70!black] (\x,1.0) circle(0.06);
}
\node[red!70!black] at (3.0,1.3) {\scriptsize near-parabolic};
\node[red!70!black] at (7.5,1.3) {\scriptsize near-parabolic};
\node[red!70!black] at (12.6,1.3) {\scriptsize near-parabolic};

% Dashed line at |f'|=1
\draw[dashed,gray!70] (0,1.0)--(13,1.0);
\node[gray!70] at (0.7,1.15) {\scriptsize $|f'_n|=1$};

% ==== Cantor refinement (lower part) ====
\begin{scope}[yshift=-3.5cm]
  % Horizontal base line (Julia set schematic)
  \draw[thick] (0,0)--(13,0);
  \node[below right] at (13,0) {\small phase space};

  % Block 1 refinement
  \foreach \x in {0.5,1.1,1.7,2.3}{
    \draw[blue!70!black,thick] (\x,-0.05)--(\x,0.6);
  }
  \node[blue!70!black] at (1.4,0.8) {\scriptsize 3 pieces};

  % Block 2 refinement (finer)
  \foreach \x in {3.9,4.2,4.5,4.8,5.1,5.4,5.7,6.0,6.3,6.6,6.9}{
    \draw[blue!50!black,thick] (\x,-0.05)--(\x,0.8);
  }
  \node[blue!70!black] at (5.3,1.0) {\scriptsize $3^{L_2}$ subpieces};

  % Block 3 refinement (very fine)
  \foreach \x in {8.2,8.3,8.4,...,12.0}{
    \draw[blue!30!black,thick] (\x,-0.05)--(\x,0.9);
  }
  \node[blue!70!black] at (10.2,1.1) {\scriptsize finer clopen partition};

  % Labels
  \node at (1.4,-0.5) {\scriptsize after Block 1};
  \node at (5.3,-0.5) {\scriptsize after Block 2};
  \node at (10.2,-0.5) {\scriptsize after Block 3};
\end{scope}

% Arrows connecting time to phase space
\foreach \x/\y in {1.4/-3.5,5.3/-3.5,10.2/-3.5}{
  \draw[->,gray!70,thick] (\x,0)--(\x,\y+2.5);
}

\end{tikzpicture}

\caption{Top: long hyperbolic (Markov) blocks (blue) enforce expansion, 
interspersed with isolated near-parabolic steps (red) where $|f'_n|\approx1$. 
Bottom: each block induces a finer clopen partition of the Cantor-like Julia set, 
showing that total disconnectedness persists arbitrarily deep in the tail.}\label{fig:time-cantor}
\end{figure}
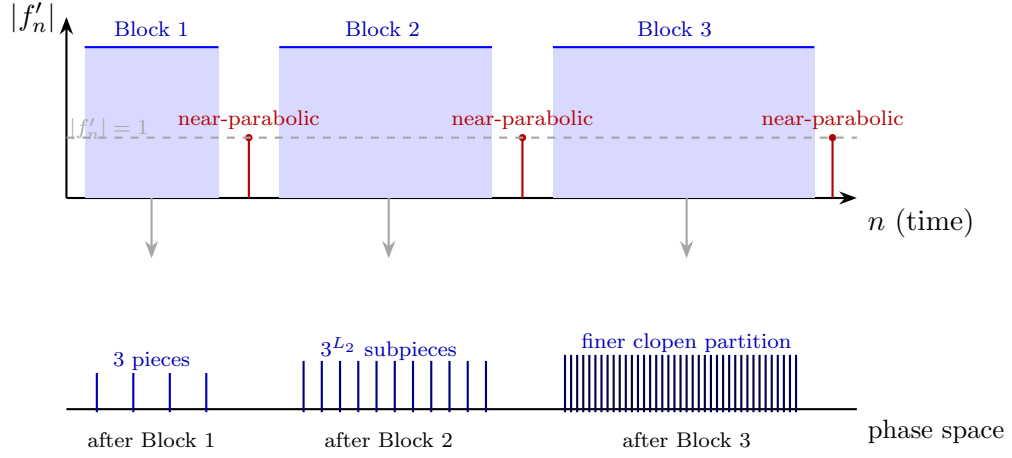

\begin{coro}[Non-hyperbolic Cantor Julia set ]\label{cor:cantor-nonhyp}
If, in addition to the hypotheses of Proposition~\ref{prop:tail-cantor}, the sequence $S$ contains infinitely many near-parabolic steps, then $J_\omega$ is totally disconnected while $S$ is not hyperbolic.
\end{coro}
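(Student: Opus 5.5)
The corollary should follow by combining the two preceding propositions, since its two conclusions are logically independent and each is already supplied by one of them. I would split the proof into two steps, one for total disconnectedness and one for non-hyperbolicity, and then check that the two sets of hypotheses can hold simultaneously for the same sequence $S$.

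\emph{Step 1: total disconnectedness.} Since $S$ satisfies the hypotheses of Proposition~\ref{prop:tail-cantor}, there are infinitely many Markov blocks $[N_k,N_k+L_k)$ with $L_k\to\infty$. That proposition gives, for each $k$, a Cantor set $C_k\subset J_{N_k}$ whose clopen partitions have mesh tending to $0$. I would apply its conclusion directly to get that $J_\omega$ is totally disconnected. The only point to check is that the extra near-parabolic steps do not interfere. They are indices outside the blocks, or at least isolated ones. In the proof of Proposition~\ref{prop:tail-cantor}, inter-block steps are handled by transporting $K_k$ back through finitely many maps drawn from the compact family of Definition~\ref{def:bounded}. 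I would therefore place each near-parabolic index $m_k$ between two consecutive blocks. Then each transport is a finite composition of bounded polynomials, and the clopen structure is preserved.

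\emph{Step 2: non-hyperbolicity.} The sequence is bounded and has infinitely many near-parabolic steps $m_k$ with $\varepsilon_{m_k}\to0$ and points $z_{m_k}\in J_{m_k}$. Proposition~\ref{prop:np-not-hyp} then applies verbatim. For fixed $\lambda>1$, $C>0$ we get $|(F_{m_k,1})'(z_{m_k})|=1+\varepsilon_{m_k}<\lambda$ for large $k$. I would make sure this contradicts the bound of Definition~\ref{hyperbolic} for every admissible $C$, not only $C\ge1$. If $C<1$ at $n=1$ causes trouble, I would instead iterate along the near-parabolic times, or argue directly that $\inf_k |f'_{m_k}(z_{m_k})|=1$ is incompatible with uniform expansion along the Julia sets $J_m$ after adjusting $C$, following the reasoning already accepted in the proof of Proposition~\ref{prop:np-not-hyp}.

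\emph{Main obstacle.} The hardest part is compatibility: showing that inserting near-parabolic steps does not break the Markov structure used in Step 1, and that the near-parabolic points lie in the stage Julia sets $J_{m_k}$. To handle this I would require the near-parabolic steps to fall strictly outside the blocks. I would also take $z_{m_k}$ to be a point of $J_{m_k}$ given by the definition. Since $J_{m_k}$ is the full preimage structure of later Julia sets, its existence is part of the hypothesis. With these placements, combining Steps 1 and 2 gives the corollary.
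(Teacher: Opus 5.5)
\textbf{Same route, but Step~2 has a gap that cannot be closed.} Your reduction is the paper's. Step~1 cites Proposition~\ref{prop:tail-cantor} and Step~2 cites Proposition~\ref{prop:np-not-hyp}; the paper only adds an explicit cubic realization with single intervening steps $m_k=N_k+L_k$. Step~1 is a direct citation, and since $S$ is given, there is nothing to ``place''.

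The gap is in Step~2, exactly where you hesitated. Definition~\ref{hyperbolic} asks only for \emph{some} $C>0$, and $n=0$ already forces $C\le 1$. So at $n=1$ it only demands $|f'_{m+1}|\ge C\lambda$ on $J_m$. That is compatible with $|f'_{m_k}(z_{m_k})|=1+\varepsilon_{m_k}$ as soon as $C\le 1/\lambda$. Neither of your repairs works:
\begin{enumerate}
\item[(i)] With the near-parabolic indices between blocks, the orbit of $z_{m_k}$ enters the next block at once. ``Iterating along near-parabolic times'' therefore multiplies the single near-neutral factor by $L_k\to\infty$ factors, each at least $\Lambda\gg 1$.
\item[(ii)] The claim that ``$\inf_k|f'_{m_k}(z_{m_k})|=1$ is incompatible with uniform expansion'' is false.
\end{enumerate}
Appealing to ``the reasoning already accepted'' in Proposition~\ref{prop:np-not-hyp} does not help. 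That proof is the same $n=1$ argument with the same defect.

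\textbf{The hypotheses cannot give non-hyperbolicity.} Here is a counterexample in the cubic family.
\begin{itemize}
\item Take $c_n=C^*$ with $|C^*|$ large, except $c_{m_k}=1+\varepsilon_k$ at isolated indices separated by blocks of lengths $L_k\to\infty$.
\item With $V=D_R$ and $R=2|C^*|^{1/2}$, the blocks are Markov. Indeed $D_R$ contains no critical value of $f_{C^*}$, and $f_{C^*}^{-1}(D_R)$ consists of three small discs around $0$ and $\pm\sqrt{-C^*}$.
\item The point $0$ is fixed by every $f_c$, and $|(F_{m,n})'(0)|=\prod_j|c_{m+j}|\to\infty$. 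Hence $0\in J_m$ for all $m$.
\item Since $|f'_{m_k}(0)|=1+\varepsilon_k$, there are infinitely many near-parabolic steps, placed strictly between blocks as you wanted.
\item $|f'_{C^*}|\ge\Lambda\asymp|C^*|$ on $f_{C^*}^{-1}(D_R)$, and this set contains $J_{n-1}$ whenever $c_n=C^*$.
\item At the steps $m_k$, the critical points $\pm i\sqrt{c_{m_k}/3}$ stay a definite distance from $J_{m_k-1}=f_{c_{m_k}}^{-1}(J_{m_k})$. This gives $|f'_{m_k}|\ge\eta\ge\tfrac12$ there.
\end{itemize}
Because the exceptional steps are isolated, for every $z\in J_m$
\[
|(F_{m,n})'(z)|\;\ge\;\Lambda^{\lfloor n/2\rfloor}\,\eta^{\lceil n/2\rceil}\;\ge\;\sqrt{\eta/\Lambda}\,\bigl(\sqrt{\Lambda\eta}\,\bigr)^{n},
\]
so $S$ is hyperbolic in the sense of Definition~\ref{hyperbolic}. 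The non-hyperbolicity half of the corollary is therefore false as stated, and no repair of Step~2 can succeed. Your compatibility precautions do not touch this problem.

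\textbf{What is missing.} You need an obstruction that defeats every pair $(C,\lambda)$. Two examples:
\begin{itemize}
\item $\inf_{J_{m-1}}|f'_m|\to 0$ along a subsequence, i.e.\ critical points of $f_m$ accumulating on $J_{m-1}$.
\item Windows $n_k\to\infty$ with points $z_k\in J_{m_k}$ such that $\frac{1}{n_k}\log|(F_{m_k,n_k})'(z_k)|\to 0$.
\end{itemize}
Isolated steps with $|f'|\approx 1$ at a single point never suffice.
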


\begin{proof}
\label{exa:cubic-nearpara}
Consider the family $f_c(z)=z^3+c\,z$ and a bounded sequence $\omega=(c_n)_{n\ge1}$ built as follows.
Fix $C^*$ with $|C^*|$ large so that $f_{C^*}:U\to V$ is Markov (three univalent branches onto $V$).
Let $[N_k,N_k+L_k)$ be blocks with $c_n\equiv C^*$ and $L_k\to\infty$, and let $m_k:=N_k+L_k$ be single
intervening steps. For each $k$, choose a repelling point $r_k$ for the next block and a deep inverse image
$z_k$ under $(f_{C^*})^{\circ L_k}$ so that $|3z_k^2+c_{m_k}|=1+\varepsilon_k\to1$, defined by
\[
c_{m_k}:=\alpha_k-3z_k^2,\qquad \alpha_k:=\frac{r_k+2z_k^3}{z_k},\quad |\alpha_k|=1+\varepsilon_k.
\]
Then $f_{c_{m_k}}(z_k)=r_k$ with $\big|f'_{c_{m_k}}(z_k)\big|=1+\varepsilon_k$, hence $m_k$ are near-parabolic steps.
By Proposition~\ref{prop:tail-cantor} the Markov blocks enforce a Cantor tail, and by
Proposition~\ref{prop:np-not-hyp} the system is not hyperbolic. Therefore $J_\omega$ is totally disconnected
but $S$ is non-hyperbolic.

\end{proof}

\section{Topological properties of $\mathcal{J}_{\omega}$ and main results}

As in the classical case, in the non-autonomous setting, critical points play a fundamental role in understanding the dynamical and topological properties of the Julia set. In the non-autonomous setting, the critical set is defined by, 
	\[\mathcal{C}_{\left(f^n_{\omega}\right)}=\left\{ z \in \mathbb{C}; (f^{m}_{\omega})^{\prime}(z)=0, \mbox{ for some } m \in \mathbb{N}\right\}.\] 
Since
\[
(f^{m}_{\omega})^{\prime}(z)=\prod_{j=0}^{m-1} f^{\prime}_{c_{m-j}}\left(f^{m-j-1}_{\omega}(z)\right),
\]
this set may also be described as follows: if $\mathcal{C}_{j-1}=\left\{z \in \mathbb{C}; f_{c_{j}}^{\prime}(z)=0\right\}$, then $\mathcal{C}_{\left(f^n_{\omega}\right)}$ is the set of $z \in \mathbb{C}$ such that $f_{\omega}^{j-1}(z) \in \mathcal{C}_{j-1}$ for some $j \in \mathbb{N}$. From the existence of Green function (see \S\ref{sec:green}) it is possible to deduce a necessary and sufficient condition for the Julia set to be connected.

The Theorem \CJS , see, for exemple, \cite{Br,BB,FS}, presents a necessary and sufficient condition for $\mathcal{J}_{\omega}$ to be connected. 
\begin{teo*}[CJS]\label{CJS}
The Julia set $\mathcal{J}_{\omega}$ is connected if and only if $\mathcal{C}_{\left(f^n_{\omega}\right)}\subset \mathcal{K}_{\omega}$.
\end{teo*}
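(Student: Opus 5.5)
The argument will go through the non-autonomous Green's function of the basin of infinity and the level sets $\{g_\omega = t\}$. Since $\sup_n|c_n|<\infty$, we can define
\[
g_\omega(z)=\lim_{n\to\infty}3^{-n}\log^+|f^n_\omega(z)|.
\]
The first step is to show that this limit exists, converges locally uniformly, and satisfies the functional relation $g_{\sigma\omega}(f_{c_1}(z))=3\,g_\omega(z)$, where $\sigma$ is the shift. On $\mathcal{A}_\omega(\infty)\setminus\{\infty\}$ it is a positive harmonic function, it vanishes exactly on $\mathcal{K}_\omega$, and $g_\omega(z)=\log|z|+O(1)$ near $\infty$. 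This rests on a uniform escape radius $R$ depending only on $\sup|c_n|$, with $|f_c(z)|\ge 2|z|$ for $|z|\ge R$, together with the telescoping estimate $\bigl|3^{-(n+1)}\log|f^{n+1}_\omega|-3^{-n}\log|f^n_\omega|\bigr|\le C3^{-n}$. Since $\mathcal{K}_\omega$ is full (its complement is the connected basin, because $g_\omega$ satisfies the maximum principle), $\mathcal{J}_\omega=\partial\mathcal{K}_\omega$ is connected if and only if $\mathcal{K}_\omega$ is connected. So it suffices to prove the statement with $\mathcal{K}_\omega$ in place of $\mathcal{J}_\omega$.

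\emph{Sufficiency.} Assume $\mathcal{C}_{(f^n_\omega)}\subset\mathcal{K}_\omega$. Consider the B\"ottcher-type coordinate $\phi_\omega=\lim_n (f^n_\omega)^{1/3^n}$, defined near $\infty$, with $g_\omega=\log|\phi_\omega|$. I will extend $\phi_\omega$ by analytic continuation along decreasing levels of $g_\omega$. To extend from $\{g_\omega>t\}$ to $\{g_\omega>t'\}$, pull back through $f^n_\omega$ and use the relation
\[
\phi_\omega=\bigl(\phi_{\sigma^n\omega}\circ f^n_\omega\bigr)^{1/3^n};
\]
the $3^n$-th root has a single-valued branch on each such region exactly when no point of $\mathcal{C}_{(f^n_\omega)}$ lies there. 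Equivalently, $g_\omega$ has no critical points in the basin, because its critical points are precisely the escaping points of $\mathcal{C}_{(f^n_\omega)}$. Under our hypothesis this continuation works all the way down, so $\phi_\omega:\mathcal{A}_\omega(\infty)\to\widehat{\mathbb C}\setminus\overline{\mathbb D}$ is a conformal isomorphism. Therefore $\mathcal{A}_\omega(\infty)$ is simply connected, and $\mathcal{K}_\omega=\widehat{\mathbb C}\setminus\mathcal{A}_\omega(\infty)$ is a decreasing intersection of closed topological disks $\{g_\omega\le t\}$, hence connected. A simpler alternative is the Riemann--Hurwitz route: each level set $\{g_\omega\le t\}$ is the preimage under the degree-$3^n$ map $f^n_\omega$ of a disk $\{g_{\sigma^n\omega}\le 3^n t\}$ with $3^n t$ large. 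That preimage contains no critical points of $f^n_\omega$ outside $\mathcal{K}_\omega$, so every component is a disk mapped with degree accounted for only by critical points inside. By Riemann--Hurwitz the preimage is a single disk, and intersecting over $t\downarrow 0$ gives connectedness.

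\emph{Necessity.} Suppose some $z_0\in\mathcal{C}_{(f^n_\omega)}$ escapes, say $(f^m_\omega)'(z_0)=0$ and $t_0=g_\omega(z_0)>0$. Apply Riemann--Hurwitz to $f^m_\omega$ restricted to a component $U$ of $\{g_\omega<t\}$ for $t$ slightly above $t_0$, over the disk $D=\{g_{\sigma^m\omega}<3^mt\}$, which is a disk for large $m$ or after further pullback. Alternatively, use the level curve $\{g_\omega=t_0\}$ through $z_0$: it is a figure-eight type curve, so $\{g_\omega<t_0\}$ splits into at least two components, each containing points of $\mathcal{K}_\omega$ because each maps properly onto a region meeting $\mathcal{K}_{\sigma^m\omega}$. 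Consequently $\mathcal{K}_\omega$ is disconnected, and hence so is $\mathcal{J}_\omega$.

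The main obstacle will be the necessity direction. Specifically, we must ensure that the critical point $z_0$ genuinely separates $\mathcal{K}_\omega$, i.e.\ that both components of $\{g_\omega<t_0\}$ meet $\mathcal{K}_\omega$. I would handle this by choosing the \emph{highest} escaping critical level $t_0$, so that $\{g_\omega<t\}$ is a single disk for every $t>t_0$. The proper map from each component onto the target disk is surjective, and the target meets $\mathcal{K}_{\sigma^m\omega}$ because $\mathcal{K}_{\sigma^m\omega}\neq\emptyset$ (it contains the fixed point $0$ of every $f_c$). Each component therefore contains preimages of points of $\mathcal{K}_{\sigma^m\omega}$, which lie in $\mathcal{K}_\omega$.
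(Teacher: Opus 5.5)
The paper does not prove Theorem CJS; it quotes it from Br\"uck, B\"uger and Forn\ae ss--Sibony, noting only that it follows from the non-autonomous Green's function. Your Green's-function plus Riemann--Hurwitz argument is the standard proof from those sources, and it is correct and matches what the paper intends. The one step to tighten is necessity: the figure-eight picture and the Riemann--Hurwitz application ``slightly above $t_0$'' should be replaced by the same count you used for sufficiency, applied at level $t_0$ itself. For $N\ge m$ large, $\{g_\omega<t_0\}=(f^N_\omega)^{-1}\bigl(\{g_{\sigma^N\omega}<3^Nt_0\}\bigr)$ omits the critical point $z_0$ of $f^N_\omega$, so it contains at most $3^N-2$ critical points (with multiplicity) and hence has at least two simply connected components. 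Each of these meets $\mathcal{K}_\omega$ by your surjectivity argument, which also makes choosing a highest escaping critical level unnecessary.
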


In a similar way to what was done in \cite{Br,BB}, we introduce subsets of $\Omega$ described in terms of connectedness of Julia set.
	\begin{eqnarray}
	\mathcal{D} &=&\{\omega = (c_n)\in \Omega ;~\mathcal{J}_{\omega} ~\text{is disconnected}\}, \label{eq:D}\nonumber \\
	\mathcal{D}_N &=&\{\omega = (c_n)\in \Omega ;~\mathcal{J}_{\omega} ~\text{has more than } N~\text{components} \}, \label{eq:DN} \nonumber\\
	\mathcal{D}_{\infty} &=&\{\omega = (c_n)\in \Omega ;~\mathcal{J}_{\omega} ~\text{has infinitely many components} \}, \label{eq:Dinf} \nonumber\\
	\mathcal{T} &=&\{\omega = (c_n)\in \Omega ;~\mathcal{J}_{\omega} ~\text{is totally disconnected } \}, \label{eq:T} \nonumber\\
	\mathfrak{F} &=&\{\omega = (c_n)\in \Omega ;~ \forall \hat{z}\in \mathcal{C}_{(f^n_{\omega})},\,\, \hat{z}\text{ is  fast escaping}  \}. \label{eq:infinity} \label{eq:FF}
\end{eqnarray} 
See Definition \ref{def:typically fast} for fast escaping.

Clearly, for $N>1$, we have
\[
\mathcal{D}  \supset \mathcal{D}_N  \supset \mathcal{D}_{\infty}  \supset \mathcal{T},
\]
but the set $\mathfrak{F}$ is neither contained in nor contains $\mathcal{T}$ (later, in \S\ref{sec:Top} we deal with this question). We shall recall that, for the random family of quadratic polynomials happens a similar situation with the difference that in that case, the set $\mathfrak{F}$ is the set of sequences of parameters $\omega$ such that $f^n_{\omega}(0)\to\infty$, due to the fact that zero is the only critical point of $f_{\omega}^n$.

The remainder of this paper is organized as follows. In the rest of this section, we state the main results of this work. 
In section \S\ref{sec:classic}, we present the classical dynamical setting concerning the topological properties of the cubic family \eqref{eq1}, where $c_n$ is constant. In Section \S\ref{sec:green}, we present the Green function for the scenario of the non-autonomous dynamics. In section \S\ref{sec:Top} we present topological properties of $\mathcal{J}_{\omega}$ and the proofs of the main results. The main results of this work are described below.

\begin{teo}\label{teo: T dense}
	Suppose that $\omega\in\Omega$ with $\delta > 3$. The set $\mathcal{T}$ defined by \eqref{eq:T} is dense in $\Omega$.
\end{teo}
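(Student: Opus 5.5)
The plan is to work with the product topology on $\Omega=D_\delta^{\mathbb{N}}$. A basic open set is determined by finitely many coordinates. So it suffices to show the following: given any $\omega=(c_n)\in\Omega$, any $N\in\mathbb{N}$ and any $\eta>0$, there is $\tilde\omega=(\tilde c_n)\in\mathcal{T}$ with $|\tilde c_n-c_n|<\eta$ for $n\le N$ (we may even take $\tilde c_n=c_n$ there). The idea is to leave the first $N$ parameters unchanged and choose the tail $\tilde c_n\equiv c^*$ constant for $n>N$. Here $c^*\in D_\delta$ is a parameter for which the autonomous map $f_{c^*}(z)=z^3+c^*z$ has a totally disconnected Julia set. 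The hypothesis $\delta>3$ is what makes such a $c^*$ available.

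\emph{Choice of $c^*$.} The critical points of $f_{c^*}$ are $\pm\sqrt{-c^*/3}$, and since $f_{c^*}$ is odd their orbits are symmetric. I would take $c^*$ real with $c^*<-3$, and $|c^*|<\delta$, which is possible because $\delta>3$. For real $x=\sqrt{-c^*/3}$ one gets $f_{c^*}(\mp x)=\pm\tfrac{2}{3}|c^*|x$. Using the classical escape-radius estimate for $z^3+cz$ from \S\ref{sec:classic} (if $|z|>R:=\max\{2,\sqrt{1+|c|}\}$ then $|f_c(z)|\ge |z|(|z|^2-|c|)$ grows), one checks that both critical values lie outside the disk of radius $R$. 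Hence both critical points escape. By the classical theory (\S\ref{sec:classic}), $f_{c^*}$ is then hyperbolic and $\mathcal{J}_{c^*}$ is a Cantor set. Equivalently, there is a topological disk $V$ with $f_{c^*}^{-1}(V)=U_1\sqcup U_2\sqcup U_3\Subset V$ and each branch univalent onto $V$. This is exactly the Markov structure used in Proposition~\ref{prop:tail-cantor}. If the estimate is too tight for $|c^*|$ slightly above $3$, I would fall back on taking $c^*$ closer to $-\delta$.

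\emph{Transfer to the non-autonomous sequence.} For $\tilde\omega=(c_1,\dots,c_N,c^*,c^*,\dots)$ we have $\mathcal{K}_{\tilde\omega}=(f^N_{\tilde\omega})^{-1}(\mathcal{K}_{c^*})$ and $\mathcal{J}_{\tilde\omega}=(f^N_{\tilde\omega})^{-1}(\mathcal{J}_{c^*})$, because the tail dynamics is autonomous. Write $P=f^N_{\tilde\omega}$, a polynomial of degree $3^N$. Since $\mathcal{J}_{c^*}$ is totally disconnected, it remains to show that $P^{-1}(\mathcal{J}_{c^*})$ is totally disconnected. Let $E$ be a connected component of $P^{-1}(\mathcal{J}_{c^*})$. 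Then $P(E)$ is connected and lies in $\mathcal{J}_{c^*}$, so it is a single point $w$. Hence $E\subset P^{-1}(w)$, which is finite, and so $E$ is a point. This step is immediate. Alternatively, Proposition~\ref{prop:tail-cantor} applies directly with blocks $[N+1,\infty)$ split into arbitrarily long Markov blocks, and it gives the same conclusion.

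\emph{Main obstacle.} The only step needing real care is the choice of $c^*$: verifying quantitatively that for some $c^*\in D_\delta$ both critical orbits leave the escape disk. This is where $\delta>3$ enters, and I expect a careful computation of $|f_{c^*}(\pm\sqrt{-c^*/3})|=\tfrac{2}{3}|c^*|^{3/2}/\sqrt3$ against the escape radius. I would first try $c^*$ real negative close to $-\delta$. I would also check non-real directions only if the real computation fails. Density then follows, since every basic neighbourhood of $\omega$ contains such a $\tilde\omega\in\mathcal{T}$.
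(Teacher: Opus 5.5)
Your proposal is correct and follows essentially the same route as the paper: keep the first $N$ parameters, append a constant tail $c^*\in D_\delta\setminus\mathcal{M}$ on the real axis with $3<|c^*|<\delta$, and pull back via $\mathcal{J}_{\tilde\omega}=(f^N_{\tilde\omega})^{-1}(\mathcal{J}_{c^*})$. Where the paper cites Brolin, you check escape of the critical values directly; this works for every $a=|c^*|>3$, since $\frac{4a^3}{27}>a+1$ there, so no fallback is needed. Your finite-fibre argument for total disconnectedness of the preimage is also the right justification, whereas the paper appeals only to $P$ being a continuous open surjection, which would not suffice on its own.
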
 
\begin{teo}\label{teo:D}
	The set $\mathcal{D}$ given in \eqref{eq:FF} is an open and dense subset of $\Omega$ provided that $\delta>3$.
\end{teo}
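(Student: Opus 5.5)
The plan is to prove density and openness separately. Density is immediate from Theorem~\ref{teo: T dense}: we have $\mathcal{T}\subset\mathcal{D}$, since a totally disconnected Julia set is uncountable and hence not connected. As $\mathcal{T}$ is dense in $\Omega$ for $\delta>3$, so is $\mathcal{D}$. The real work is openness. We take on $\Omega=D_\delta^{\mathbb{N}}$ the product topology, so basic neighbourhoods constrain only finitely many coordinates. The argument below also works verbatim for the sup-metric, since that topology is finer.

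\textbf{Step 1: a uniform escape radius.} For $|c|<\delta$ and $|z|\ge R:=\sqrt{\delta+2}$ we have
\[
|z^3+cz|\ \ge\ |z|\,(|z|^2-\delta)\ \ge\ 2|z|.
\]
Hence any non-autonomous orbit, for any $\omega\in\Omega$, that reaches $\{|z|> R\}$ at some time stays there and tends to $\infty$. Thus escape is witnessed at a finite time, with a bound independent of the remaining parameters.

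\textbf{Step 2: reduce disconnectedness to a finite-time condition.} Let $\omega=(c_n)\in\mathcal{D}$. By Theorem \CJS, there is $\hat z\in\mathcal{C}_{(f^n_\omega)}\setminus\mathcal{K}_\omega$. So there are $j\in\mathbb{N}$ and a critical point $w=\pm\sqrt{-c_j/3}$ of $f_{c_j}$ with $f^{j-1}_\omega(\hat z)=w$, and the orbit $f_{c_{j+1}}\circ\cdots\circ f_{c_j}(w)$ is unbounded. By Step~1 there is a finite $N\ge j$ such that
\[
\bigl|f_{c_N}\circ\cdots\circ f_{c_j}(w)\bigr|>R .
\]
This inequality depends continuously on the finitely many parameters $c_j,\dots,c_N$, once we fix a locally continuous branch of $\sqrt{-c_j/3}$ near $c_j$; when $c_j=0$ we simply have $w=0$, which is still continuous. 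Hence there is $\eta>0$ such that for every $\omega'=(c'_n)\in\Omega$ with $|c'_n-c_n|<\eta$ for $j\le n\le N$, the corresponding critical point $w'$ of $f_{c'_j}$ satisfies $|f_{c'_N}\circ\cdots\circ f_{c'_j}(w')|>R$. By Step~1 it then escapes under the tail of $\omega'$, whatever that tail is.

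\textbf{Step 3: pull back to a critical point of the family.} Since $f^{j-1}_{\omega'}$ is a nonconstant polynomial, it is surjective. Choose $\hat z'$ with $f^{j-1}_{\omega'}(\hat z')=w'$. Then $\hat z'\in\mathcal{C}_{(f^n_{\omega'})}$ and its orbit is unbounded, so $\hat z'\notin\mathcal{K}_{\omega'}$. By Theorem \CJS, $\mathcal{J}_{\omega'}$ is disconnected. The cylinder set $\{\omega' : |c'_n-c_n|<\eta,\ j\le n\le N\}$ is open and contained in $\mathcal{D}$, so $\mathcal{D}$ is open.

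The main point needing care is Step~2: making the escape a finite-time, continuously-perturbable condition. This is handled by the uniform radius of Step~1 together with the continuous choice of the critical point. I expect no serious obstacle beyond that; note also that $\delta>3$ is used only through the density statement.
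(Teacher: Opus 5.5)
Your proof is correct and follows the paper's route. Openness goes through Theorem \CJS, finite-time escape past a uniform radius, continuity in finitely many parameters and a surjective pull-back. Density comes from the tail-replacement construction, which you reach through Theorem~\ref{teo: T dense} and $\mathcal{T}\subset\mathcal{D}$ rather than repeating it with $c\notin\mathcal{M}$.

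Your openness step is more careful than the paper's. The paper keeps the point $f^j_{\omega}(z_0)$ fixed while letting vary the parameter of the map for which it is critical, so the pulled-back point $\psi$ need not be a critical point of the perturbed sequence. You instead constrain $c_j$ and move the critical point $\pm\sqrt{-c'_j/3}$ continuously, which closes that gap.
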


For the rest of this section we concentrate on finding conditions to have totally disconnected Julia set.

Let $R>R_{\delta}:=\sqrt{1+\delta}$, and $\tilde{R} \in (R, R^3-\delta R)$. 
Choose an arbitrary radius $\rho\in [R,\tilde{R}]$ and $k\in\mathbb{N}$, put  
\[
B_k = (f^k_{\omega})^{-1}(D_{\rho}), 
\]
which is a union of pairwise disjoint topological discs $B_{k,j}$. These topological discs are being mapped by a $d_j^k$-degree map $f^k_{\omega}$ to $D_{\rho}$, where $d_j < 3$ is a non-zero real number.  In the following theorem we deal with the case $\rho = \tilde{R}$ (see Lemma \ref{lem:R tilde} for definitions of these notations), and represent a sufficient condition for total disconnectedness of the Julia set $\mathcal{J}_{\omega}$.
% It is worth mentioning that, a same result is valid for the quadratic random family of polynomials $z^2+c_n$ (see \cite[Proposition 10]{totall}).

\begin{definition}\label{def:typically fast}
	Let $W\subset\mathbb{C}$ be a bounded Borel set with $W\subset D_{\delta}$ for some $\delta > 0$. Also let $\mu_{\delta}$ be a probability Borel measure on $W$, and $\mathbb{P}={\bigotimes}_{n=0}^{\infty}\mu_{\delta}$ the product distribution on $W^{\mathbb{N}}$ generated by $\mu_{\delta}$. Suppose that $G_{\delta}$ are as given in Lemma \ref{lem:R tilde}. For $\omega\in W^{\mathbb{N}}$, let $z\in\mathbb{C}$, we say that $z$ is typically fast escaping if there exists $\gamma>0$ such that 
	\[
	\mathbb{P}\left(A_k(z)\right) < e^{-\gamma k},
	\]   
	where 
	\[
	A_k(z) = \{\omega\in\Omega ~;~g_{\omega}(z) < \frac{G_{\delta}}{3^k}\}.
	\]
\end{definition}
%\begin{teo}\label{teo:at most N}
%	Let $\omega \in D_{\delta}^{\mathbb{N}}$, with $\delta > 0$. If there exists a natural number $N$ such that for finitely many $k\in\mathbb{N}$ and for each component $B_{k,j}$ of $B_k$, the degree of the map $f_{\omega}^k : B_{k,j} \to D_{\tilde{R}}$ is at most $N$. Then the Julia set $\mathcal{J}_{\omega}$ is totally disconnected.
%\end{teo}
\begin{teo}\label{teo:at most N}
	Let $\omega \in D_{\delta}^{\mathbb{N}}$, with $\delta > 0$. If there exists a natural number $N$ such that for finitely many $k\in\mathbb{N}$ and for each component $B_{k,j}$ of $B_k$, the degree of the map $f_{\omega}^k : B_{k,j} \to D_{\tilde{R}}$ is at most $N$. Then the Julia set $\mathcal{J}_{\omega}$ is totally disconnected (i.e. $\omega\in\mathcal{T}$).
\end{teo}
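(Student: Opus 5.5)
The statement as written says "for finitely many $k$", but the natural and provable version is "for infinitely many $k$". This is the analogue of \cite[Proposition 10]{totall} for $z^2+c_n$. I will prove that version, i.e. assume there is an infinite set $\{k_i\}$ such that every component $B_{k_i,j}$ of $B_{k_i}=(f^{k_i}_\omega)^{-1}(D_{\tilde R})$ is mapped onto $D_{\tilde R}$ with degree at most $N$.

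\textbf{Step 1: the filled Julia set sits inside the nested pieces.} By Lemma \ref{lem:R tilde}, $\tilde R<R^3-\delta R$, so $f_{c}^{-1}(D_{\tilde R})\Subset D_R\subset D_{\tilde R}$ for every $|c|<\delta$. Hence $B_{k+1}\Subset B_k$ and $\mathcal{K}_\omega=\bigcap_k B_k$, since points outside $D_R$ escape. Consequently $\mathcal{J}_\omega\subset\mathcal{K}_\omega\subset B_k$ for every $k$. So any connected subset $E\subset\mathcal{J}_\omega$ lies, for each $k$, in a single component $B_{k,j(k)}$, and these components are nested. It therefore suffices to show that $\operatorname{diam} B_{k_i,j(k_i)}\to0$ along the subsequence.

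\textbf{Step 2: a modulus estimate.} Set $A=D_{\tilde R}\setminus\overline{D_R}$, an annulus of modulus $m_0=\frac{1}{2\pi}\log(\tilde R/R)>0$. For each block $k_{i-1}<k\le k_i$, pull $A$ back under $f^{k}_\omega$ into the component $B_{k,j(k)}$. The region $B_{k,j(k)}\setminus\overline{B_{k+1,j(k+1)}}$ contains an annulus $A_k$ surrounding $B_{k+1,j(k+1)}$, namely a component of $(f^k_\omega)^{-1}(A)$ whose closure may meet critical values. Its modulus is at least $m_0/\deg(f^k_\omega|_{B_{k,j(k)}})$.

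\textbf{Step 3: bounded degree gives infinitely many good annuli.} The degree along a nested chain is nondecreasing in $k$, so the degree is at most $N$ for every $k\le k_i$ in the chain. If the pullback of $A$ to $B_{k,j(k)}$ is not an annulus, then $B_{k+1}$ has at least two components inside $B_{k,j(k)}$ and the degree splits. I would handle this case by choosing, in each such level, the annulus separating $B_{k+1,j(k+1)}$ from the rest. Alternatively, following the quadratic argument in \cite{totall}, I would use that the map $B_{k,j}\to D_{\tilde R}$ is a branched covering of degree $\le N$. Then the preimage of $A$ contains a round-type annulus of modulus $\ge m_0/N$ around each component of $B_{k+1}$ inside it, by the Grötzsch/Teichmüller estimate for the complement of at most $N$ critical values.

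Summing these moduli gives $\sum_k \operatorname{mod}(A_k)\ge \sum_{k\le k_i} m_0/N\to\infty$. By the Grötzsch inequality, $\bigcap_k B_{k,j(k)}$ is then a single point, so $E$ is a point and $\mathcal{J}_\omega$ is totally disconnected.

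\textbf{Main obstacle.} The hard step is Step 3: producing a genuinely nondegenerate annulus of modulus $\gtrsim m_0/N$ at every level of the nested chain, uniformly, when critical values of $f_{c_{k+1}}$ may lie in $A$. My first attempt is to vary $\rho\in[R,\tilde R]$ as the setup allows. At most $2$ critical values of $f_{c_{k+1}}$ can obstruct, so a subannulus of $A$ of modulus $\ge m_0/3$ avoids all critical values of $f^{k+1}_\omega$ restricted to the relevant piece. This subannulus pulls back as an unbranched covering of degree $\le N$, giving modulus $\ge m_0/(3N)$, and the divergence argument then goes through.
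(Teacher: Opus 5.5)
Reading ``finitely many'' as ``infinitely many'' is correct (the paper's proof also runs along an increasing sequence $(k_m)$). Your outer frame is also the paper's: nested pieces $B_{k,j(k)}$ containing $E$, disjoint separating annuli, and Gr\"otzsch's inequality. The gap is in the one step with real content, the lower bound on the moduli.

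\textbf{The degree is not monotone along the chain.} Your Step~3 claim that the degree $d_k$ of $f^k_\omega$ on $B_{k,j(k)}$ is nondecreasing is false. Let $W$ be the component of $f_{c_{k+1}}^{-1}(D_{\tilde R})$ containing $f^k_\omega(B_{k+1,j(k+1)})$. Then $d_{k+1}=\deg(f_{c_{k+1}}|_W)\cdot\deg\bigl(f^k_\omega\colon B_{k+1,j(k+1)}\to W\bigr)$. The second factor is only $\le d_k$, and it is strictly smaller as soon as $B_{k,j(k)}$ contains several components of $(f^k_\omega)^{-1}(W)$. The first factor can equal $1$. For instance, $d_1=3$ but $d_2=1$ when $c_1$ is small and both critical values of $f_{c_2}$ lie outside $D_{\tilde R}$, which happens for suitable parameters when $\delta$ is large. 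So $d_{k_i}\le N$ says nothing about $d_k$ for $k<k_i$, and your sum $\sum_{k\le k_i} m_0/N$ is unjustified. This is easy to repair: use only the levels $k=k_i$. The annuli at those levels are pairwise disjoint and nested, so infinitely many of modulus bounded below by a constant depending on $N$ already give divergence.

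\textbf{You never actually produce the annulus.} A component of $(f^k_\omega)^{-1}(A)$ is not an annulus once $A$ contains critical values of $f^k_\omega|_{B_{k,j(k)}}$. Your remedy miscounts these critical values. They are not just the (at most two) critical values of the last map. They are the images under $f_{c_k}\circ\dots\circ f_{c_{i+1}}$ of the critical values of every $f_{c_i}$ with $i\le k$. By Riemann--Hurwitz, a proper map of degree $d$ from a disk onto $D_{\tilde R}$ has $d-1$ critical points counted with multiplicity. So up to $d-1$ critical values can lie in $A$, and a critical-value-free subannulus of modulus $m_0/3$ need not exist.

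The missing idea, which is what the paper does, is to spend the degree hypothesis exactly here. At a level $k_i$ there are at most $N-1$ critical values, so one of $N$ concentric subannuli $A^*\subset A$, each of modulus $m_0/N$, contains none. Let $D^*$ be the disk bounded by the outer circle of $A^*$, and $U^*$ the component of $(f^{k_i}_\omega)^{-1}(D^*)$ containing $E$. Then $U^*\cap (f^{k_i}_\omega)^{-1}(A^*)$ is an unbranched covering of $A^*$ of degree $\le N$. Hence it is an annulus of modulus $\ge m_0/N^2$, lying in $B_{k_i,j(k_i)}\setminus\overline{B_{k_i+1,j(k_i+1)}}$ and surrounding $E$. (The paper writes $M/N$, but $m_0/N^2$ is what this argument gives, and it suffices.)

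Alternatively, your ``separating annulus'' can be made rigorous with no critical-value bookkeeping. Let $K'$ be the component of $(f^k_\omega)^{-1}(\overline{D_R})$ containing $B_{k+1,j(k+1)}$. Pulling back an admissible metric for the curves joining $\overline{D_R}$ to $\partial D_{\tilde R}$ shows $\operatorname{mod}(B_{k,j(k)}\setminus K')\ge m_0/d_k$. With either estimate, applied only at the levels $k_i$, the moduli sum to $\infty$ and your Step~1 finishes the proof.
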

\begin{teo}\label{teo:Ak}
	Let $\hat{z}\in\mathcal{C}_{(f^n_{\omega})}$ be an arbitrary critical point of $f^n_{\omega}$. 
%	Set
%	\[
%	A_k := \bigcap_{\hat{z}\in\mathcal{C}_{(f^n_{\omega})}} A_k(\hat{z}). 
%	\]
	\begin{itemize}
		\item[(a)] For all $i=0,\dots,k-1$, if $\sigma^i\omega\notin A_{k-i}(\hat{z})$ then, for every connected component $B_{k,j}$ of $B_k$, the map $f^k_{\omega} : B_{k,j} \to D_{\tilde{R}}$ has degree 1 (Here, $\sigma$ denotes the shift function). 
		\item[(b)] If this fails for $0\leq l\leq k-1$ indices (i.e., $\sigma^i\omega \notin A_{k-i}(\hat{z})$ for $l$ indices) then the degree of the map $f^k_{\omega}$ is at most $N=3^l$.  
	\end{itemize}
Therefore, in both cases the Julia set $\mathcal{J}_{\omega}$ is totally disconnected ($i.e., \omega\in\mathcal{T}$). 
\end{teo}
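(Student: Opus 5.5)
We argue through the Green function $g_\omega$ of \S\ref{sec:green}, which satisfies $g_{\sigma\omega}(f_{c_1}(z)) = 3\,g_\omega(z)$. Iterating this relation gives $g_{\sigma^i\omega}(f^i_\omega(z)) = 3^i g_\omega(z)$. By Lemma \ref{lem:R tilde}, the disc $D_{\tilde R}$ is contained in the sublevel set where the Green function is below $G_\delta$, up to the normalization fixed there. The first step is to identify the components $B_{k,j}$ with components of $\{g_\omega < G_\delta/3^k\}$, or at least to compare them. Thus $z\in B_{k,j}$ forces $g_\omega(z)\lesssim G_\delta/3^k$, and more generally $f^i_\omega(B_{k,j})$ is contained in a component of $(f^{k-i}_{\sigma^i\omega})^{-1}(D_{\tilde R})$. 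The Green-function control then applies at every intermediate level.

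For (a), we use Riemann--Hurwitz along the factorization $f^k_\omega = f_{c_k}\circ\cdots\circ f_{c_1}$. The degree of $f^k_\omega$ on $B_{k,j}$ is the product of the degrees of $f_{c_{i+1}}$ restricted to the successive images $f^i_\omega(B_{k,j})$. Each factor is $1$ unless that image contains a critical point of $f_{c_{i+1}}$, i.e.\ unless the relevant point of $\mathcal{C}_{(f^n_\omega)}$ lies in it. Such a critical point $\hat z$ at level $i$ would then satisfy
\[
g_{\sigma^i\omega}(\hat z) < \frac{G_\delta}{3^{k-i}},
\]
which says precisely that $\sigma^i\omega\in A_{k-i}(\hat z)$. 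The hypothesis excludes this for every $i$, so every factor has degree one and $f^k_\omega: B_{k,j}\to D_{\tilde R}$ is univalent. Since every component is a simply connected preimage of the disc $D_{\tilde R}$ containing no critical values, the maximum principle shows the degree count is exactly this product.

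For (b), the same factorization applies when the condition fails at $l$ indices. The steps that may contain a critical point number at most $l$, and each contributes degree at most $3$, since $\deg f_{c}=3$ and a cubic restricted to a domain has degree at most $3$. The remaining factors are $1$, so the degree is at most $3^l=:N$. Total disconnectedness in both cases then follows from Theorem \ref{teo:at most N}, taking $N=1$ in case (a) and $N=3^l$ in case (b). We need this bound for the relevant $k$ in the quantifier sense of that theorem. Fixing the critical point and the bad index set uniformly in $k$ gives it for infinitely many $k$, and by monotonicity of the nested $B_k$ for all large $k$.

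The main obstacle is the first step: relating the components $B_{k,j}$, defined by preimages of the round disc $D_{\tilde R}$, to Green-function sublevel sets with threshold exactly $G_\delta/3^{k}$. This must be done sharply enough that a critical point inside $f^i_\omega(B_{k,j})$ really forces $\sigma^i\omega\in A_{k-i}(\hat z)$, with the constants from Lemma \ref{lem:R tilde}. We will first try to define $G_\delta$ as $\sup_{\omega}\max_{|z|\le\tilde R} g_\omega(z)$, which makes the inclusion immediate. A secondary issue is that $l$ must be bounded independently of $k$ for Theorem \ref{teo:at most N}; we will read (b) as assuming that.
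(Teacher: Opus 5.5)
Your proposal is correct and follows essentially the same route as the paper. You factor $f^k_\omega$ through the intermediate components, and there $g_{\sigma^i\omega} = 3^{-(k-i)}\, g_{\sigma^k\omega}\circ f^{k-i}_{\sigma^i\omega} < G_\delta/3^{k-i}$ excludes the level-$i$ critical point at the good indices; you then multiply degrees (at most $3$ at each of the $l$ bad steps) and conclude with Theorem \ref{teo:at most N}.

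Your uniform choice of $G_\delta$, and your reading that the hypothesis holds for infinitely many $k$ with $l$ independent of $k$, are exactly what the paper uses implicitly. The one exception is your closing appeal to monotonicity in $k$: degrees are not monotone along the nested $B_k$, so that claim is unjustified, but it is also unnecessary because Theorem \ref{teo:at most N} only needs infinitely many $k$.
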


\begin{teo}\label{teo: P almost every}
	Let $W$, $\mu_{\delta}$ and $\mathbb{P}$ be as given in Definition \ref{def:typically fast}. If every critical point of $f^n_{\omega}$ is fast scaping, then the assumptions of Theorem \ref{teo:at most N} are satisfied for $\mathbb{P}$ almost every $\omega\in W^{\mathbb{N}}$. Therefore, for $\mathbb{P}$ almost every $\omega\in W^{\mathbb{N}}$ 
	the Julia set $\mathcal{J}_{\omega}$ is totally disconnected.
\end{teo}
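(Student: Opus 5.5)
The plan is to reduce everything to Theorem~\ref{teo:Ak} and Theorem~\ref{teo:at most N}, using a Borel--Cantelli argument built on the exponential tail in Definition~\ref{def:typically fast}.

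\textbf{Step 1: a single bad-index count controls the degree.} Fix $\omega$ and a level $k$. Call an index $i\in\{0,\dots,k-1\}$ \emph{bad at level $k$} if $\sigma^i\omega\in A_{k-i}(\hat z)$ for some critical point $\hat z\in\mathcal{C}_{(f^n_\omega)}$. By Theorem~\ref{teo:Ak}(b), if there are at most $l$ bad indices at level $k$, then every map $f^k_\omega:B_{k,j}\to D_{\tilde R}$ has degree at most $3^l$. It therefore suffices to find, for $\mathbb{P}$-a.e.\ $\omega$, an integer $l=l(\omega)$ such that the number of bad indices at level $k$ is at most $l$ for all but finitely many $k$. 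Theorem~\ref{teo:at most N} with $N=3^{l}$ then gives $\omega\in\mathcal{T}$.

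\textbf{Step 2: probability of a bad index.} Since $\mathbb{P}$ is a product measure, it is $\sigma$-invariant. Hence
\[
\mathbb{P}\bigl(\sigma^i\omega\in A_{k-i}(\hat z)\bigr)=\mathbb{P}\bigl(A_{k-i}(\hat z)\bigr)<e^{-\gamma(k-i)},
\]
by the fast-escaping hypothesis. The critical points are finitely many preimages at each stage, so the relevant $\hat z$ at time $i$ range over at most $2$ points of $\mathcal{C}_{i}$ pulled back. I would take $\gamma$ uniform over them (this is how I read ``every critical point is fast escaping''). The sets $A_j$ decrease in $j$, since $g_\omega<G_\delta/3^{j+1}$ implies $g_\omega<G_\delta/3^{j}$. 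Therefore index $i$ is bad at level $k$ exactly when $k-i\le J_i(\omega)$, where $J_i:=\max\{j:\sigma^i\omega\in A_j\}$ satisfies $\mathbb{P}(J_i\ge j)\lesssim e^{-\gamma j}$.

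\textbf{Step 3: Borel--Cantelli.} Let $E_k^{(l)}$ be the event that more than $l$ indices are bad at level $k$. I would bound $\mathbb{P}(E_k^{(l)})$ by summing, over $(l+1)$-tuples $i_1<\dots<i_{l+1}<k$, the probability that all of them are bad. If these events were independent, each tuple would contribute $\prod_r e^{-\gamma(k-i_r)}$. The goal is to choose $l$ (depending on $\gamma$) so that $\sum_k\mathbb{P}(E_k^{(l)})<\infty$. Borel--Cantelli then gives that a.s.\ only finitely many $E_k^{(l)}$ occur, which is exactly the hypothesis of Theorem~\ref{teo:at most N}.

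\textbf{Main obstacle.} I expect two difficulties here.

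The first is independence. The event $\sigma^i\omega\in A_j$ depends on the whole tail $(c_{i+1},c_{i+2},\dots)$ through the Green function $g_{\sigma^i\omega}$. I would handle this by truncation. The Green function estimates of \S\ref{sec:green} (namely $|g_\omega-3^{-n}\log|f^n_\omega||=O(3^{-n})$ outside $D_R$) make $\{g_{\sigma^i\omega}(\hat z)<G_\delta/3^{j}\}$ equal, up to an event of exponentially small probability, to an event depending only on the coordinates $c_{i+1},\dots,c_{i+j+O(1)}$. Bad indices whose windows are disjoint are then independent.

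The second is summability. The naive tuple sum is bounded but does not decay in $k$. I would therefore group bad indices into disjoint windows and use the fact that $l+1$ independent bad windows must have total length at least about $k/2$ unless they cluster near $k$. Clustering near $k$ is controlled by a large-deviation bound of the form $e^{-\gamma' k}$ for $l$ large. If this grouping argument fails, the fallback is to show directly that a.s.\ $J_i\le C$ for all large $i$ is impossible to avoid but $J_i\le \tfrac{2}{\gamma}\log i$ eventually. I would then re-examine whether Theorem~\ref{teo:Ak}(b) really needs a uniform $N$ or tolerates $N$ growing subexponentially in $k$.
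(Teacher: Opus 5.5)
\textbf{The target in Step~1 is false in general, so Step~3 cannot work.} You aim for at most $l$ bad indices at \emph{all but finitely many} levels $k$. Since $\mathbb{P}$ is shift-invariant, for $k>l$ the event $E_k^{(l)}$ contains $\sigma^{-(k-l-1)}F_l$, where $F_l=\bigcap_{r=0}^{l}\sigma^{-r}A_{l+1-r}$. Thus $\sigma^{-(k-l-1)}F_l$ is the event that the $l+1$ indices $k-l-1,\dots,k-1$ are all bad at level $k$, and $\mathbb{P}(E_k^{(l)})\ge\mathbb{P}(F_l)$, a constant independent of $k$.

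Suppose $\mathbb{P}(F_l)>0$. This is typical when the fast-escape hypothesis is not vacuous, e.g.\ if $\mu_\delta$ charges a neighbourhood of $0$, so that a long run of small parameters keeps all the relevant critical orbits inside $D_R$. Then $\sum_k\mathbb{P}(E_k^{(l)})$ diverges for every $l$. A cluster of bad indices just below $k$ is not a large-deviation event of probability $e^{-\gamma' k}$. Worse, by ergodicity of the Bernoulli shift, $\sigma^n\omega\in F_l$ for infinitely many $n$ almost surely. So for a.e.\ $\omega$ and every $l$ there are infinitely many levels with more than $l$ bad indices.

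Your fallback does not close the gap either. The bound $J_i\le\frac{2}{\gamma}\log i$ eventually only bounds the degrees by a power of $k$ whose exponent blows up as $\gamma\to0$. Then the moduli of the nested annuli in the proof of Theorem~\ref{teo:at most N} need not have divergent sum.

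\textbf{What is actually needed.} You only need infinitely many good levels: the proof of Theorem~\ref{teo:at most N} runs along an increasing sequence $(k_m)$. This is exactly the paper's target. It shows that the set $\mathcal{L}(\hat z)$ of $\nu$ with more than $l$ bad indices at all but finitely many levels is null, and then invokes Theorem~\ref{teo:Ak}(b).

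Write $N_k(\omega)$ for the number of bad indices at level $k$; the two critical points $\pm\sqrt{-c/3}$ give the same event, since $f_c$ is odd. This weaker statement needs no independence or truncation:
\begin{itemize}
\item By linearity and shift-invariance, $\mathbb{E}N_k=\sum_{m=1}^{k}\mathbb{P}(A_m)<\sum_{m\ge1}e^{-\gamma m}=:S$.
\item Markov's inequality then gives $\mathbb{P}(N_k>l)\le S/(l+1)$.
\item The reverse Fatou inequality $\mathbb{P}(\limsup_k G_k)\ge\limsup_k\mathbb{P}(G_k)$, applied to $G_k=\{N_k\le l\}$, gives $\mathbb{P}(N_k\le l\text{ for infinitely many }k)\ge1-S/(l+1)$.
\end{itemize}
Letting $l\to\infty$, almost every $\omega$ admits some $l(\omega)$ with $N_k(\omega)\le l(\omega)$ for infinitely many $k$. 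Theorem~\ref{teo:Ak}(b) together with Theorem~\ref{teo:at most N} for $N=3^{l(\omega)}$ then finishes the proof.

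This argument also replaces the step in the paper's Claim that appeals to $g_\nu$ being ``often non-decreasing along orbits'', which is only heuristic there.
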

We set the following notation and definition: for every natural numbers $k>j$ and $\omega\in\Omega$, set $
f^{k,j}_{\omega}(z) = f_{c_k}\circ \dots \circ f_{c_{j+1}}(z).$ 
Define
\begin{eqnarray*}
	\mathfrak{G} = \big\{z\in\widehat{\mathbb{C}}&;&\,\exists \text{ a bounded simply connected domain }\mathfrak{D} \subset \mathbb{C} \\
	&& \text{ such that } \overline{\bigcup f_{\omega}^k(\mathcal{J}_{\omega})} \subset \mathfrak{D} \text{ and } f^{k,j}_{\omega}(z) \notin \mathfrak{D}\\
	&& \text{ for all } j=0,\dots, k-1 \text{ and } k\in\mathbb{N} 
	\big\}.
\end{eqnarray*}
%\begin{teo}\label{teo7}
%	Let $\omega=(c_n)\in (\overline{D}_{\delta})^{\mathbb{N}}$ for $\delta>3$ be a sequence on the axes of coordinates. If $\mathcal{C}_{(f^n_{\omega})} \subset \mathfrak{G}$ then the Julia set $\mathcal{J}_{\omega}$ is totally disconnected.
%\end{teo}
\begin{teo}\label{teo7}
	Let $\omega=(c_n)\in (\overline{D}_{\delta})^{\mathbb{N}}$ for $\delta>3$ be a sequence on the coordinate axes. If $\mathcal{C}_{(f^n_{\omega})} \subset \mathfrak{G}$ then $\omega\in\mathcal{T}$
\end{teo}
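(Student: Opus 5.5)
The plan is to reduce Theorem \ref{teo7} to Theorem \ref{teo:at most N}. We show that the hypothesis $\mathcal{C}_{(f^n_{\omega})}\subset\mathfrak{G}$ forces every component $B_{k,j}$ of $B_k=(f^k_{\omega})^{-1}(D_{\tilde R})$ to be mapped with uniformly bounded degree; in fact we aim for degree one. Total disconnectedness then follows as in Theorem \ref{teo:at most N}, via a nested-disc and modulus argument.

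\textbf{Step 1: fix the domain $\mathfrak{D}$ and compare it with $D_{\tilde R}$.} Let $\mathfrak{D}$ be the bounded simply connected domain given by the definition of $\mathfrak{G}$. It contains $\overline{\bigcup_k f^k_{\omega}(\mathcal{J}_{\omega})}$, and all the pieces $f^{k,j}_{\omega}(\hat z)$ of critical orbits avoid it. Since $\delta>3$ and the $c_n$ lie on the coordinate axes, $f_{c_n}$ commutes with $z\mapsto \bar z$ (for real $c_n$) or with $z\mapsto -\bar z$ (for imaginary $c_n$). So the critical points $\pm\sqrt{-c_n/3}$ lie on the axes and the dynamics respects these symmetries. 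I would use this to choose $\mathfrak{D}$ symmetric, say a disc or an axis-symmetric Jordan domain. I would also check that $\mathfrak{D}$ can replace $D_{\tilde R}$ in the construction before Theorem \ref{teo:at most N}: the preimages $(f^{k,j}_{\omega})^{-1}(\mathfrak{D})$ should be compactly nested, with $f_{c_{j+1}}^{-1}(\mathfrak{D}_{j+1})\Subset \mathfrak{D}_j$. This follows from Lemma \ref{lem:R tilde}, by enlarging $\mathfrak{D}$ to contain $\overline{D_{R}}$ with $R>R_\delta$ if necessary.

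\textbf{Step 2: degree one by Riemann--Hurwitz.} Consider a component $U$ of $(f^k_{\omega})^{-1}(\mathfrak{D})$ and factor $f^k_\omega=f_{c_k}\circ\cdots\circ f_{c_1}$. The intermediate images $f^{j}_\omega(U)$ are components of $(f^{k,j}_{\omega})^{-1}(\mathfrak{D})$. The map $f_{c_{j+1}}$ on such a component has a critical point only if some critical point $\hat z$ of stage $j$ satisfies $f^{k,j}_{\omega}(\hat z)\in\mathfrak{D}$. The membership $\hat z\in\mathfrak{G}$ excludes exactly this. Each stage map is therefore a proper map without critical points between simply connected domains, hence univalent by Riemann--Hurwitz. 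It follows that $f^k_\omega:U\to\mathfrak{D}$ is conformal for every $k$, so the degree bound $N=1$ of Theorem \ref{teo:at most N} holds, now with $\mathfrak{D}$ in place of $D_{\tilde R}$.

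\textbf{Step 3: shrinking components.} Let $x\in\mathcal{J}_\omega$. Its components $U_k(x)$ in $(f^k_\omega)^{-1}(\mathfrak{D})$ are nested, and each is mapped conformally onto $\mathfrak{D}$. The annuli $U_k\setminus \overline{U_{k+1}}$ are conformal pullbacks of $\mathfrak{D}\setminus \overline{f_{c_{k+1}}^{-1}(\mathfrak{D})\cap(\text{component})}$, whose moduli are bounded below uniformly in $k$. This uniform bound uses boundedness of the $c_n$ together with compactness of $\overline{D}_\delta$. By Grötzsch's inequality the moduli sum diverges, so $\bigcap_k U_k(x)=\{x\}$. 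Hence $\mathcal{J}_\omega$ is totally disconnected and $\omega\in\mathcal{T}$.

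\textbf{Main obstacle.} I expect the hardest part to be Step 1. One must justify that $\mathfrak{D}$ is compatible with the nested preimage structure; in particular, $f^{-1}_{c_{j+1}}(\mathfrak{D})$ must be compactly contained in $\mathfrak{D}$ and all its components must be simply connected. One must also check the uniform modulus lower bound. I would first try to replace $\mathfrak{D}$ by the larger disc $D_{\tilde R}$ while keeping critical orbits outside. If that fails, I would use the axis symmetry to show that the components of $(f^{k,j}_\omega)^{-1}(\mathfrak{D})$ are Jordan domains symmetric about the axes, which gives the needed simple connectivity and compact containment.
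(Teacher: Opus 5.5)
\textbf{There is a genuine gap, in Steps 1 and 3.} Your Step 2 is sound, under the reading the paper also uses: a single $\mathfrak{D}$ avoids every critical value $f^{k,j}_\omega(w)$, where $w$ is a critical point of $f_{c_{j+1}}$. It is exactly the input of the paper's proof. The $3^k$ inverse branches $\mathtt{f}^k_j$ of $f^k_\omega$ are univalent on $\mathfrak{D}$, and their images are disjoint simply connected domains covering $\mathcal{J}_\omega$.

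The gap sits precisely at the ``main obstacle'' you flag, and the remedies you propose are not available.
\begin{itemize}
\item The domain $\mathfrak{D}$ is handed to you by the hypothesis and is the only region known to be free of critical values. You may shrink it, but you may not enlarge it to contain $\overline{D_R}$, nor replace it by a symmetric domain, because nothing keeps critical values out of the new domain. For example, for $c_n\equiv-3.1$ the critical values $\pm 2.10\ldots$ lie outside a thin neighbourhood of $\mathcal{J}_\omega\subset[-\sqrt{4.1},\sqrt{4.1}]$, but inside $D_R$ once $R>2.11$. Step 2 then collapses.
\item Your symmetry claims are false for $c=it$. The map $f_{it}$ does not commute with $z\mapsto-\bar z$, and its critical points lie on the diagonals, not the axes. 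In any case the paper's proof never uses the axis hypothesis or $\delta>3$.
\item For the given $\mathfrak{D}$, the one-step containment $f_{c_{k+1}}^{-1}(\mathfrak{D})\Subset\mathfrak{D}$ is not implied by anything. Your nesting $U_{k+1}(x)\subset U_k(x)$ and your annuli $U_k\setminus\overline{U_{k+1}}$ rest on it. If $\mathfrak{D}$ has a long thin tentacle, the univalent pullbacks carry tentacles pointing in other directions, and these leave $\mathfrak{D}$. So the ``uniform modulus bound from compactness of $\overline{D}_\delta$'' has nothing to stand on, and Step 3 is unproved.
\end{itemize}

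\textbf{The missing idea is uniform escape.} The compact set $\widehat{\mathbb{C}}\setminus\mathfrak{D}$ lies in the basin of infinity, because $\mathfrak{D}$ is simply connected and contains $\mathcal{J}_\omega$, so it escapes uniformly. The paper uses this only from stage $0$ (Claim 2: $\mathtt{f}^k_j(\mathfrak{D})\subset\mathfrak{D}$ for $k\ge k'$). It then avoids nesting between levels altogether:
\begin{itemize}
\item Montel makes the inverse branches a normal family.
\item Hurwitz makes every limit either constant or univalent.
\item Univalent limits are excluded by a blow-up property of $\mathcal{J}_\omega$ together with $\operatorname{diam} f^k_\omega(\mathcal{J}_\omega)\ge 1$.
\end{itemize}
Hence branch images of compact subsets of $\mathfrak{D}$ shrink to points. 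In particular this applies to $K=\overline{\bigcup_k f^k_\omega(\mathcal{J}_\omega)}$, which contains $f^k_\omega(E)$ for every component $E$ of $\mathcal{J}_\omega$.

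\textbf{How to repair your modulus route.} You need the escape uniformly along all shifts.
\begin{itemize}
\item Since $\mathcal{J}_{\sigma^k\omega}=f^k_\omega(\mathcal{J}_\omega)\subset K$ for every $k$, one has $g_{\sigma^k\omega}\ge g_{\hat K}$ off the filled hull $\hat K\subset\mathfrak{D}$ of $K$. Hence $\inf_k\min_{\widehat{\mathbb{C}}\setminus\mathfrak{D}} g_{\sigma^k\omega}>0$.
\item Combining this with $g_{\sigma\nu}\circ f_\nu=3g_\nu$ and the uniform bound of Proposition~\ref{prop:2}, there is a fixed block length $L$ with the following property. For every $k$, every component of $(f^{k+L,k}_\omega)^{-1}(\mathfrak{D})$ lies in one fixed closed topological disc $Q\subset\mathfrak{D}$.
\item Pull back by the conformal maps $f^{k}_\omega\colon U_k(x)\to\mathfrak{D}$. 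This gives nested annuli $U_k(x)\setminus\overline{U_{k+L}(x)}$, each of modulus at least $\operatorname{mod}(\mathfrak{D}\setminus Q)>0$.
\item Gr\"otzsch's inequality then finishes the argument.
\end{itemize}
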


%%%%%%%%%%%%%%%%%%%%%%
%%%%%%%%%%%%%%%%%%%%%%
%\newpage
\section{The scenario in classical dynamics} \label{sec:classic}

Let $f$ be a complex polynomial of degrees $d\geq 2$ defined on the Riemann sphere $\widehat{\mathbb{C}}= \mathbb{C}\cup\{\infty\}$. The term $f^n=f\circ f^{n-1}$ denotes the $n$-th iterate of $f$. Under the iterations of $f$ the Riemann sphere decompose into two sets, the Fatou set $\mathcal{F}_f$, which is the set where the family $\{f^n\}$ is normal in the sense of Montel, and the complement of Fatou set which is called the Julia set and is denoted by $\mathcal{J}_f$. The set
\[\mathcal{K}_f=\{z\in \mathbb{C}; \{f^n(z)\mbox{ is bounded}\}\},\]
is called the filled-in Julia set of $f$. The Julia set $\mathcal{J}_f$ of $f$ is the boundary of $ \mathcal{K}_f$, see \cite[p. 95]{Milnor}.

We say that $z$ is a finite critical point of $f$ if $f'(z)=0$. The point $z=\infty$ is a critical point of every polynomial $f$.

We say that a set $K$ is totally disconnected if each component of $K$ consists of a single point. The following proposition is a topological result, but it depends on the properties of $\widehat{\mathbb{C}}$.
\begin{prop}\label{pro1}
	If $K\subset\widehat{\mathbb{C}}$ is a totally disconnected set then the complement of $K$ is connected.
\end{prop}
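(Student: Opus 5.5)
We have to show that if $K\subset\widehat{\mathbb{C}}$ is totally disconnected, then $\widehat{\mathbb{C}}\setminus K$ is connected. We argue by contradiction.

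Suppose $U=\widehat{\mathbb{C}}\setminus K$ is not connected. We first dispose of degenerate cases. If $K$ is empty, the complement is the whole sphere, which is connected. If $K=\widehat{\mathbb{C}}$, then $K$ is connected and not a point, so it is not totally disconnected. Hence $U$ is a nonempty open set.

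Since $U$ is disconnected, choose a connected component $V$ of $U$ such that $W:=U\setminus V$ is nonempty. Because $\widehat{\mathbb{C}}$ is locally connected, components of open sets are open. Thus $V$ and $W$ are disjoint nonempty open sets with $U=V\cup W$.

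The key step is a topological fact about the sphere: the boundary of a component of the complement of a closed set is connected. We apply it as follows.

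Set $A=\overline{V}$. We claim that $\widehat{\mathbb{C}}\setminus A$ is a nonempty open set containing $W$, and that $\partial V=\partial A$ lies in $K$.

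For the second part, note that $V$ is open in $U$ and also closed in $U$, so $\overline{V}\cap U=V$. Hence $\partial V\subset K$.

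For the first part, each component $E$ of $\widehat{\mathbb{C}}\setminus A$ is simply connected onto $A$'s side, in the following sense. Since $A=\overline{V}$ is the closure of a connected set, it is connected. By the standard fact that complementary components of a connected closed set in $\widehat{\mathbb{C}}$ are simply connected, $\partial E$ is connected.

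The real theorem we need is the unicoherence of $\widehat{\mathbb{C}}$: if $\widehat{\mathbb{C}}=P\cup Q$ with $P,Q$ closed and connected, then $P\cap Q$ is connected. Pick a component $E$ of $\widehat{\mathbb{C}}\setminus\overline{V}$ that meets $W$. Such an $E$ exists because $W\cap\overline{V}=\emptyset$: $W$ is open and disjoint from $V$. Take $P=\overline{E}$ and $Q=\widehat{\mathbb{C}}\setminus E$. Both are closed, and both are connected: $Q$ is connected because it equals $\overline{V}$ together with the other components of the complement, each of whose boundaries lies in $\overline{V}$.

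By unicoherence, $P\cap Q=\partial E$ is connected.

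Next we locate $\partial E$. We have $\partial E\subset\partial\overline{V}\subset\partial V\subset K$, and $\partial E$ is nonempty because $E$ is a proper nonempty open subset of the sphere. So $\partial E$ is a nonempty connected subset of $K$, and total disconnectedness forces $\partial E$ to be a single point $p$.

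Now $E\setminus\{p\}$ is not what we want; instead we use $E$ itself. Since $E$ is connected and open with $\partial E=\{p\}$, the set $\overline{E}=E\cup\{p\}$ is closed, and $E$ is both open and closed in $\widehat{\mathbb{C}}\setminus\{p\}$. The set $\widehat{\mathbb{C}}\setminus\{p\}\cong\mathbb{C}$ is connected, so $E=\widehat{\mathbb{C}}\setminus\{p\}$. But then $V\subset\widehat{\mathbb{C}}\setminus\overline{E}=\emptyset$, contradicting $V\neq\emptyset$.

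This contradiction shows that $\widehat{\mathbb{C}}\setminus K$ is connected.

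The step we expect to be the main obstacle is justifying the connectedness of $\partial E$, that is, the unicoherence of the sphere. We would either cite it as a standard fact, for instance from Newman's \emph{Elements of the Topology of Plane Sets of Points}, or derive it from the Mayer–Vietoris sequence together with $H^1(\widehat{\mathbb{C}})=0$. Once $\partial E$ is connected, the rest of the argument is elementary.
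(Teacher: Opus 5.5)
Your proof is correct, but it takes a different route from the paper's. The paper's argument is one line, resting on Newman's Theorem 14.3: if two points of $\widehat{\mathbb{C}}$ are separated by a closed set $K$, they are separated by a single component of $K$. Components of $K$ are points, and a point does not disconnect the sphere, so $\widehat{\mathbb{C}}\setminus K$ is connected.

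You start instead from the more primitive fact that $\widehat{\mathbb{C}}$ is unicoherent, and in effect prove by hand the special case of Newman's theorem that is needed. You take a component $V$ of $U$ and a component $E$ of $\widehat{\mathbb{C}}\setminus\overline{V}$ meeting $W$. Unicoherence applied to $\overline{E}$ and $\widehat{\mathbb{C}}\setminus E$ makes $\partial E$ a nonempty connected subset of $\partial V\subset K$, hence a point. A domain with one-point boundary is a punctured sphere, which leaves no room for $V$. The paper's version is shorter but hides all the plane topology in the citation. Yours is longer but self-contained modulo unicoherence, and it makes explicit which property of the sphere is actually used.

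A few small clean-ups:
\begin{enumerate}
\item The paragraph saying $E$ is ``simply connected onto $A$'s side'' is muddled and redundant. The unicoherence argument right after it is what proves $\partial E$ connected, so you can delete that paragraph.
\item The claim $\partial V=\partial A$ should be the inclusion $\partial A\subset\partial V$, since equality can fail (e.g.\ for a slit disk). The inclusion is all you use later.
\item Like the paper, you tacitly take $K$ closed: you need $U$ open, just as the paper needs closedness to invoke Newman's theorem. This is the relevant case for Julia sets, and the general case reduces to it. For a separation $\widehat{\mathbb{C}}\setminus K=A\cup B$, the closed set $\overline{A}\cap\overline{B}$ lies in $K$. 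Since $K$ has empty interior, its complement is the disjoint union of the nonempty open sets $\widehat{\mathbb{C}}\setminus\overline{A}\supset B$ and $\widehat{\mathbb{C}}\setminus\overline{B}\supset A$.
\end{enumerate}
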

\begin{proof}
	In \cite[Theorem 14.3]{MN}, we have the following result,  if $x,y\in \widehat{\mathbb{C}}$ are separated by the closed set $K$ then they are separated by a component of $K$. If $\widehat{\mathbb{C}}\setminus K$ is not connected then there are points $x, y\in \widehat{\mathbb{C}}\setminus K$ separated by $K$. In this case, $x$ and $y$ are separated by a component of $K$. But this is a contradiction because each component of K is a single point.
\end{proof}

A component of $\mathcal{K}_f$ is called critical if it contains critical points.
We denote the component of $\mathcal{K}_f$ containing $z$ by $\mathcal{K}_f(z)$. A component $\mathcal{K}_f(z)$ of  $\mathcal{K}_f$ is called aperiodic if
$f^n(\mathcal{K}_f(z))\neq \mathcal{K}_f(z)$ for all $n > 0$.

About the connectivity of the Julia set, Fatou and Julia proved the following theorem, see \cite{Fatou,Julia}.

\begin{teo*}[Julia-Fatou]
	\begin{enumerate}
		\item The Julia set of a complex polynomial $f$ is connected if and only if $\mathcal{K}_f$ contains all critical points of $f$.
		\item The Julia set of a complex polynomial $f$ is a Cantor set if $\mathcal{K}_f$ contains no critical points of $f$.
	\end{enumerate}
\end{teo*}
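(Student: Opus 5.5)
We prove both parts for an arbitrary polynomial $f$ of degree $d\ge 2$, using the Green function of the basin of infinity,
\[
g_f(z)=\lim_{n\to\infty} d^{-n}\log^+|f^n(z)|,
\]
and the Böttcher coordinate. Standard facts: $g_f$ is continuous and subharmonic on $\mathbb{C}$, vanishes exactly on $\mathcal{K}_f$, is harmonic on $\mathbb{C}\setminus\mathcal{K}_f$, and satisfies $g_f(f(z))=d\,g_f(z)$. Near infinity there is a Böttcher map $\phi$ with $\phi(f(z))=\phi(z)^d$ and $\log|\phi|=g_f$.

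For part (1), we extend $\phi$ as far as possible along decreasing levels of $g_f$. For $t>0$ large, $\{g_f<t\}$ is a topological disc. Passing from $\{g_f<dt\}$ to $\{g_f<t\}=f^{-1}(\{g_f<dt\})$ is governed by Riemann--Hurwitz. The degree-$d$ proper map $f:\{g_f<t\}\to\{g_f<dt\}$ has Euler characteristic relation
\[
\chi(\{g_f<t\})=d\,\chi(\{g_f<dt\})-\#(\text{critical points of } f \text{ in } \{g_f<t\}).
\]
\begin{itemize}
\item If every critical point lies in $\mathcal{K}_f$, each $\{g_f<t\}$ contains all $d-1$ critical points. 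By induction on the level, each sublevel set is connected with $\chi=1$, so it is a disc.
\item Hence $\mathcal{K}_f=\bigcap_t\{g_f<t\}$ is a decreasing intersection of closed discs, so it is connected and full. Equivalently, $\phi$ extends to a conformal isomorphism $\widehat{\mathbb{C}}\setminus\mathcal{K}_f\to\widehat{\mathbb{C}}\setminus\overline{\mathbb D}$.
\item Then $\mathcal{J}_f=\partial\mathcal{K}_f$ is connected. Indeed, the boundary of a full continuum is connected: it is the intersection of the nested closed annuli $\{0<g_f\le 1/n\}\cup\partial\mathcal{K}_f$ closures.
\end{itemize}

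Conversely, suppose some critical point $c$ has $g_f(c)=s>0$, and take $s$ maximal among the critical values of $g_f$ on critical points. Then the level set $\{g_f=s\}$ is a figure-eight type curve through $c$. Its preimage structure shows that $\{g_f<s\}$ is disconnected, with at least two components each meeting $\mathcal{K}_f$: every component of $\{g_f<t\}$ maps properly onto a component at level $dt$, hence contains points of $\mathcal{K}_f$. Since $\mathcal{K}_f$ splits between distinct components of $\{g_f<s\}$, $\mathcal{J}_f=\partial\mathcal{K}_f$ is disconnected as well.

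For part (2), assume no critical point lies in $\mathcal{K}_f$. Choose $t_0>0$ smaller than $\min g_f(c)$ over all finite critical points $c$, and let $V=\{g_f<d\,t_0\}$.
\begin{itemize}
\item Each component of $U=f^{-1}(V)=\{g_f<t_0\}$ is compactly contained in $V$. Each such component contains no critical points, so it maps univalently onto its image component of $V$.
\item Since $\mathcal{K}_f\subset U$, iterating gives the nested components of $f^{-n}(V)$. Every inverse branch $f^{-1}:V_i\to U_j\Subset V$ is a strict contraction in the hyperbolic metric of the relevant components of $V$, by Schwarz--Pick.
\item With finitely many components of $V$, the contraction is uniform on the compact set $\overline U$. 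Hence the diameters of the components of $f^{-n}(V)$ tend to $0$ uniformly.
\end{itemize}
Therefore every connected component of $\mathcal{K}_f$, which lies in a nested sequence of such components, is a point. So $\mathcal{K}_f=\mathcal{J}_f$ is totally disconnected (it has no interior).

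It remains to see that $\mathcal{J}_f$ is perfect and nonempty, which is the classical fact that Julia sets have no isolated points. Being compact, metrizable, perfect and totally disconnected, $\mathcal{J}_f$ is homeomorphic to a Cantor set. Note that $\widehat{\mathbb{C}}\setminus\mathcal{J}_f$ is then connected, consistent with Proposition~\ref{pro1}.

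The main obstacle is the converse direction of (1): identifying the topology of the critical level set of $g_f$ and showing that it genuinely separates $\mathcal{K}_f$. We will handle this by using the local normal form of $g_f$ near the critical point $c$: in a suitable coordinate, $g_f-s$ behaves like $\operatorname{Re}(w^{m+1})$ with $m\ge1$. Combined with the maximum principle, this shows that $\{g_f<s\}$ has at least two components, each of which contains points of $\mathcal{K}_f$.
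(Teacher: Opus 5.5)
The paper does not prove this statement. It quotes it as a classical theorem with references to Fatou and Julia and uses it only as background, so there is no argument in the paper to compare yours with. Your proposal is the standard Green-function/B\"ottcher proof found in the textbooks (e.g.\ Milnor's), and it is correct in outline. What it adds over the paper is that it makes this background self-contained. Two steps need an extra argument.

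\textbf{Connectedness of sublevel sets.} In the forward direction of (1), Riemann--Hurwitz only gives $\chi(\{g_f<t\})=1$. That does not imply connectedness for a possibly disconnected planar open set: a disc together with an annulus also has $\chi=1$.

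The missing fact is that every component of $\{g_f<t\}$, $t>0$, is simply connected. This follows from the maximum principle for the harmonic function $g_f$ on $\mathbb{C}\setminus\mathcal{K}_f$: no component of $\{g_f>t\}$ can be bounded, so $\{g_f\ge t\}\cup\{\infty\}$ is connected. Then each component $U_i$, mapping with degree $d_i$ onto the disc $\{g_f<dt\}$ and containing $\delta_i$ critical points, satisfies $1=d_i-\delta_i$. Summing over components gives exactly one component.

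The same fact is needed in (2). A critical-point-free component of $U$ maps univalently onto its image only because the image component of $V$ is simply connected. Otherwise you would only get an unbranched covering, possibly of higher degree.

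\textbf{The converse of (1).} The ``figure-eight'' description of $\{g_f=s\}$ is not accurate in general: several critical points at level $s$, or higher multiplicity, give a bouquet with more loops. ``Preimage structure'' is not an argument. Either of two routes closes the step.
\begin{enumerate}
\item Since $s$ is maximal, $\{g_f<ds\}$ is a disc by the forward argument, while $\{g_f<s\}$ misses $c$ and so contains at most $d-2$ critical points. Riemann--Hurwitz then gives $\chi(\{g_f<s\})\ge 2$, hence at least two (simply connected) components.
\item Use your local normal form $g_f-s=\mathrm{Re}(w^{m+1})$, which is valid because $c$ is a critical point of the harmonic function $g_f=\tfrac1d\, g_f\circ f$. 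It gives $m+1\ge 2$ local sectors of $\{g_f<s\}$ at $c$. The maximum principle must then be applied globally, not locally. If two such sectors were joined by a path inside $\{g_f<s\}$, the resulting Jordan curve through $c$ would enclose a sector of $\{g_f>s\}$ while $g_f\le s$ on the curve. That contradicts the maximum principle for the subharmonic function $g_f$. This route does not even need $s$ to be maximal.
\end{enumerate}

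In either route, the minimum principle shows that each component of $\{g_f<s\}$ meets $\mathcal{K}_f$. No component lies inside $\mathcal{K}_f$, since its boundary is on $\{g_f=s\}$. So each component also meets $\partial\mathcal{K}_f=\mathcal{J}_f$, which is therefore disconnected.

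The remaining steps are fine: the uniform hyperbolic contraction coming from $U_j\Subset V$, the shrinking of the nested components, and perfectness of $\mathcal{J}_f$ together with Brouwer's characterization of the Cantor set.
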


Fatou conjectured (see \cite[pag 84]{Fatou}) that if a critical point belongs to $\mathcal{K}_f$, then $\mathcal{K}_f$ cannot be totally disconnected. This was disproved by Brolin, see \cite[Theorem 13.8]{Brolin}. Brolin presented examples of real cubic polynomials, where $\mathcal{J}_f$ is a Cantor set, with $\mathcal{J}_f =\mathcal{K}_f$, and $\mathcal{K}_f$ containing a critical point.

Branner and Hubbard completely settled the question of when the Julia set of a cubic polynomial is a Cantor set.  They proved the following.
\begin{teo*}[Branner-Hubard]
	For a cubic polynomial $f$ with one critical point  escaping to $\infty$ and the other critical point in $\mathcal{K}_f$, the Julia set $\mathcal{J}_f$ is a Cantor set if and only if the critical component of $\mathcal{K}_f$  is aperiodic. 
\end{teo*}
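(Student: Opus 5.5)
We would follow the original strategy of Branner and Hubbard, which combines the Green function of $f$ with a nest of annuli around each component of $\mathcal{K}_f$. Let $c_1$ be the escaping critical point and $c_2\in\mathcal{K}_f$ the other one, and let $g_f$ be the Green function of $f$, so that $g_f(f(z))=3g_f(z)$. Put $r_0=g_f(c_1)>0$. The open set $U_0=\{g_f<3r_0\}$ is a topological disc. Its preimage $\{g_f<r_0\}$ contains no escaping critical value, so by Riemann--Hurwitz it consists of two discs $U_1^{\pm}$ in the figure-eight configuration, and $f$ maps them onto $U_0$ with degrees $2$ and $1$. Pulling back further, the components of $\{g_f<3^{-n+1}r_0\}$ are discs $P_n$ (the pieces at level $n$), and each component of $\mathcal{K}_f$ is the intersection of a nested sequence $P_0\supset P_1\supset\cdots$.

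Whenever $\overline{P_{n+1}}\subset P_n$, the difference $A_n=P_n\setminus\overline{P_{n+1}}$ is an annulus. By the Grötzsch inequality, $\mathcal{K}_f(z)$ is a single point as soon as $\sum_n \operatorname{mod} A_n=\infty$. Since $f$ maps $A_n(z)$ onto $A_{n-1}(f(z))$ as a covering of degree $1$ or $2$, we get
\[
\operatorname{mod}A_n(z)\ \ge\ \tfrac12\operatorname{mod}A_{n-1}(f(z)),
\]
with equality when the covering has degree $2$. The degree is $2$ exactly when $P_n(z)$ is the critical piece, i.e. the one containing $c_2$. We record this with the \emph{tableau}: the array marking, for each $n$ and each $j\ge0$, whether $P_{n}(f^j(c_2))$ is the critical piece.

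The first step is to reduce to the critical component. If $\mathcal{K}_f(c_2)$ is a point, then every component of $\mathcal{K}_f$ whose orbit never meets $\mathcal{K}_f(c_2)$ is the univalent pullback of a nest with bounded-degree accumulation, and so is also a point. Hence $\mathcal{J}_f=\mathcal{K}_f$ is a Cantor set by Proposition~\ref{pro1}, together with perfectness of $\mathcal{J}_f$. Both directions of the theorem therefore concern only $\mathcal{K}_f(c_2)$.

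For \emph{periodic implies not Cantor}, suppose $f^p(\mathcal{K}_f(c_2))=\mathcal{K}_f(c_2)$. We would take a deep critical piece $P_n(c_2)$ and the piece $P_{n+p}(c_2)\Subset P_n(c_2)$. Then $f^p:P_{n+p}(c_2)\to P_n(c_2)$ is a proper map of degree $2$, i.e. a quadratic-like map. Its filled Julia set consists of the points whose $f^p$-orbit stays in these pieces, which is $\mathcal{K}_f(c_2)$. This set contains the critical point $c_2$, so by the straightening theorem and the quadratic case of the Julia--Fatou theorem it is connected. It is also not a point, since it contains the full Julia set of a quadratic-like map. Hence $\mathcal{J}_f$ is not totally disconnected.

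For \emph{aperiodic implies Cantor}, the key combinatorial step, and the main obstacle, is to prove that if the critical nest is aperiodic then $\sum_n\operatorname{mod}A_n(c_2)=\infty$. Here we would use the tableau rules of Branner--Hubbard: if a row is critical at depth $n$, then the rows that its orbit visits are constrained in a way that forbids long runs of halvings. The plan is to partition the levels $n$ according to the first return of $f^j(c_2)$ to the critical piece, and to show by induction that in each return block the modulus is halved at most a bounded number of times relative to the number of new levels gained. Aperiodicity guarantees infinitely many \emph{semicritical} levels, at which the modulus is refreshed by a definite amount (comparable to $\operatorname{mod}A_0$). Summing these contributions then gives divergence. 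The delicate point we expect is the bookkeeping that produces infinitely many disjoint contributions of size at least $c\cdot\operatorname{mod}A_0/2^{K}$ for a uniform $K$. If this direct counting fails, we would fall back on Branner--Hubbard's argument: reduce to the tableau and show that a convergent sum forces the tableau to be eventually periodic, hence the critical component to be periodic.
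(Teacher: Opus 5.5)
The paper gives no proof of this theorem; it only quotes Branner--Hubbard. Your outline therefore has to stand on its own. It follows Branner--Hubbard's route, and the reduction to the critical nest and the periodic direction are essentially sound. The direction that carries all the content, aperiodic $\Rightarrow$ $\sum_n\operatorname{mod}A_n(c_2)=\infty$, is not proved, and the mechanism you propose for it fails in general.

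Let $k(n)$ be the number of critical positions $(j,n-j)$, $0\le j\le n-1$, in your tableau. Your pull-back estimate gives $\operatorname{mod}A_n(c_2)\ge 2^{-k(n)}m_0$, with $m_0>0$ a depth-zero constant. A semicritical position costs the same factor $\tfrac12$ as a critical one, so nothing is ever ``refreshed''. If $k(n)\le K$ along a subsequence (non-recurrent or reluctantly recurrent critical point), the series diverges trivially, and that is all your picture covers. But there are aperiodic, persistently recurrent tableaux, Branner--Hubbard's Fibonacci tableau being the standard example. For these $k(n)\to\infty$, and your estimates give no infinite family of annuli with a uniform bound $c\,m_0/2^K$. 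There divergence has to come from counting, and this is the missing idea. Using the tableau rules, in particular the rule governing semicritical positions, one shows that in the persistently recurrent, non-periodic case critical annuli have at least two \emph{children}. A child of $A_d(c_2)$ is a deeper critical annulus $A_{d+j}(c_2)$ mapped onto $A_d(c_2)$ by a degree-two covering $f^j$ whose intermediate pull-backs are univalent. Each annulus has at most one parent, so the $k$-th generation of descendants of some $A_{d_0}(c_2)$ consists of at least $2^k$ distinct annuli. Each has modulus equal to $2^{-k}\operatorname{mod}A_{d_0}(c_2)$, so every generation contributes at least $\operatorname{mod}A_{d_0}(c_2)$. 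Your fallback (a convergent sum forces an eventually periodic tableau) merely restates what has to be shown.

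Smaller repairs:
\begin{enumerate}
\item When $c_2\in A_n(z)$, $f$ maps $A_n(z)$ onto all of $P_{n-1}(f(z))$, not as a covering of $A_{n-1}(f(z))$. The inequality $\operatorname{mod}A_n(z)\ge\tfrac12\operatorname{mod}A_{n-1}(f(z))$ still holds via the pair of pants. Exact halving occurs when $c_2\in P_{n+1}(z)$, not merely when $P_n(z)$ is critical.
\item The reduction step needs divergence of $\sum_n\operatorname{mod}A_n(c_2)$, not just that $\mathcal{K}_f(c_2)$ is a point. Take an orbit that enters the critical nest ever deeper without landing in it. At each record time $j$, the pull-back by $f^j$ of the block of critical annuli between consecutive record depths is univalent, and these blocks exhaust a tail of $\sum_n\operatorname{mod}A_n(c_2)$. ``Bounded-degree accumulation'' does not describe this case.
\item In the periodic direction, $p$ must be the exact period $q$ of the component; for $p=mq$ the map $f^p$ has degree $2^{m}$ on deep pieces. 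Connectedness of $\mathcal{K}_f(c_2)$ is automatic, since it is a component. You still need to observe that $\partial\mathcal{K}_f(c_2)\subset\mathcal{J}_f$ is a non-degenerate continuum.
\item Proposition~\ref{pro1} plays no role. A totally disconnected $\mathcal{K}_f$ has empty interior, so $\mathcal{K}_f=\mathcal{J}_f$, which is perfect.
\end{enumerate}
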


Qiu and Yin in \cite{QY}, Kozlovski and van Strien in \cite{KS}, independently proved the following result, which settles the matter regarding totally disconnected in the classical dynamics setting.
\begin{teo*}[\cite{QY},\cite{KS}]
	The Julia set of a polynomial is totally disconnected if and only if each critical component of the filled Julia set is aperiodic.
\end{teo*}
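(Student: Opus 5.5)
The plan is to prove the two implications separately. The direction ``totally disconnected $\Rightarrow$ every critical component is aperiodic'' is soft. The converse is the deep part, and I would organize it around the Branner--Hubbard puzzle built from the Green function of $f$.

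\textbf{Necessity.} Suppose some critical component $\mathcal{K}_f(c)$ is periodic, say $f^p(\mathcal{K}_f(c))=\mathcal{K}_f(c)$. At least one critical point escapes, since otherwise $\mathcal{J}_f$ is connected by the Julia--Fatou theorem and there is nothing to prove. Cut the plane by the equipotential through the highest escaping critical value and by the external rays landing at the corresponding cut points. This gives a puzzle piece $P\ni c$ such that $f^p:P'\to P$ is a polynomial-like map of degree $\ge 2$ whose filled Julia set is $\mathcal{K}_f(c)$; here $P'$ is the component of $f^{-p}(P)$ containing $c$. The filled Julia set of a polynomial-like map of degree $\ge2$ is a full continuum with more than one point. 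Its boundary is therefore a nondegenerate connected subset of $\partial\mathcal{K}_f=\mathcal{J}_f$, so $\mathcal{J}_f$ is not totally disconnected.

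\textbf{Sufficiency, reduction.} Assume every critical component is aperiodic. By Proposition~\ref{pro1} and $\partial\mathcal{K}_f=\mathcal{J}_f$, it suffices to show that every component of $\mathcal{K}_f$ is a point. Each component $\mathcal{K}_f(z)$ equals $\bigcap_n \overline{P_n(z)}$, where $P_n(z)$ is the depth-$n$ puzzle piece containing $z$. By the Grötzsch inequality, it is enough to show that
\[
\sum_n \operatorname{mod}\bigl(P_n(z)\setminus\overline{P_{n+1}(z)}\bigr)=\infty .
\]
I would split into cases according to the orbit of $z$.
\begin{enumerate}
\item If the forward orbit of $\mathcal{K}_f(z)$ eventually avoids all critical nests from some depth on, then the pieces map with uniformly bounded degree onto finitely many pieces. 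Koebe-type control then gives annuli of moduli bounded below infinitely often, and the sum diverges.
\item Otherwise the orbit accumulates on a critical component. By the pullback principle, divergence for $z$ then follows from divergence for the critical nests, so it remains to show that the critical puzzle pieces shrink.
\end{enumerate}

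\textbf{Sufficiency, the critical nests.} For the critical components, aperiodicity excludes renormalization. I would build the Kozlovski--Shen--van Strien \emph{enhanced nest} around each critical component, or alternatively use the Kahn--Lyubich covering lemma in the unicritical-like situation. The aim is complex a priori bounds, namely a uniform lower bound on the moduli along a subsequence of the nest. Divergence of the sum, and hence shrinking of the critical pieces, would then follow.

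The main obstacle is exactly these a priori bounds in the persistently recurrent, non-renormalizable case. There the moduli of the Branner--Hubbard annuli may decay, and a naive sum can converge. I would first try the covering-lemma argument, which controls the loss of modulus under branched pullbacks of bounded degree. This is precisely the content of \cite{QY} and \cite{KS}, and I would rely on it at this step rather than reprove it.
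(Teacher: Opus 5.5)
The paper does not prove this statement. It quotes it from \cite{QY} and \cite{KS} as background. Your proposal also hands the decisive input (complex bounds for non-renormalizable critical nests) to those references, so in substance you rest on the same source. The difficulty is that the reduction you present as routine has a genuine gap, so your outline does not reduce the theorem to those bounds alone.

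The gap is your case (2). The pullback principle transfers divergence from a critical nest to $z$ only in the simplest setting. Let $k_n$ be the first time the orbit of $z$ enters $\bigcup_{c'}P_{n+1}(c')$, say entering $P_{n+1}(c_n)$. Then $f^{k_n}$ maps $P_{n+k_n}(z)\setminus\overline{P_{n+k_n+1}(z)}$ conformally onto $P_n(c_n)\setminus\overline{P_{n+1}(c_n)}$. Hence $\sum_m\operatorname{mod}\bigl(P_m(z)\setminus\overline{P_{m+1}(z)}\bigr)\ge\sum_n\operatorname{mod}\bigl(P_n(c_n)\setminus\overline{P_{n+1}(c_n)}\bigr)$. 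This has three problems:
\begin{enumerate}
\item With several critical points, $c_n$ can switch between nests. Divergence of each critical sum separately does not make this mixed sum diverge.
\item More seriously, the bounds you plan to import are, as you say, lower bounds along a subsequence of the nest. That is, they concern annuli $I_n\setminus\overline{I_{n+1}}$ between non-consecutive pieces $I_n=P_{m_n}(c)$. Gr\"otzsch's inequality goes the wrong way to turn these into divergence of the consecutive sum you adopted as your criterion, which, as you note yourself, may converge.
\item Pulling $I_n\setminus\overline{I_{n+1}}$ back along the first entry of $z$ into $I_{n+1}$ does not work either. During the last $m_{n+1}-m_n$ steps the intermediate pieces have depth below $m_{n+1}$ and may contain critical points, so the degree is uncontrolled.
\end{enumerate}
Nor does mere shrinking of the critical pieces (what you say ``remains to show'') help for points that accumulate on a critical component without landing in it. A univalent first-entry map onto a tiny critical piece says nothing about the size of its domain. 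Handling such $z$ needs the extra structure of the nests used in \cite{KS} and \cite{QY}: nice pieces whose first-entry maps extend with bounded degree. So case (2) must be imported as well, and at that point you are simply citing the whole theorem, as the paper does.

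Your necessity argument is right in outline, but the piece must be chosen deep. Take $P=P_n(c)$ and $P'=P_{n+p}(c)$ with $n$ so large that every critical point of $f^p$ lying in $P'$ belongs to $\mathcal{K}_f(c)$. Otherwise $f^p\colon P'\to P$ can have further critical points, and its filled Julia set can be disconnected and different from $\mathcal{K}_f(c)$; a polynomial-like filled Julia set is not a continuum in general.

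The polynomial-like detour is unnecessary anyway. If $\mathcal{K}_f(c)=\{c\}$, then $f^p(c)=c$, so $c$ is superattracting and its basin is an open subset of $\mathcal{K}_f$ containing $c$, which is absurd. Hence $\mathcal{K}_f(c)$ is a nondegenerate full continuum, and its boundary is a nondegenerate connected subset of $\partial\mathcal{K}_f=\mathcal{J}_f$.

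Also, in the disconnected case the Branner--Hubbard pieces are just components of sublevel sets of the Green function; no external rays are involved.
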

As a consequence of the above Theorem, the Julia set of a complex polynomial is a Cantor set if and only if each critical component of the filled Julia set is aperiodic.

We will consider throughout this text $D_{\delta}=\{z\in \mathbb{C}; |z|<\delta\}$ and $\overline{D_{\delta}}$ the closure of $D_{\delta}$, where $\delta > 0$ is a given positive real number. The set 
\[A(\widehat{z})=\{z\in\mathbb{C};f^n(z)\rightarrow \widehat{z}\},\] is called the attractor basin of point $\widehat{z}$.

For a more detailed and comprehensive introduction to classic dynamic the reader is refereed to the books
\cite{Beardon, CG, Milnor}.

%%%%%%%%%%%%%%%%%%%
\subsection{A family of cubic polynomials}

Consider the family of cubic polynomial
\begin{equation}\label{eq4}
	f_c(z)=z^3+cz,
\end{equation}
where $c\in\mathbb{C}$. We denote here the Julia set and the Fatou set by $\mathcal{J}_c $ and $\mathcal{F}_c$ respectively. O Connectedness locus is a subset of the parameter space which consists of those parameters for which the corresponding Julia set is connected, which we denote by $\mathcal{M}$.

The finite critical points of the family \eqref{eq4}  are $z_+=\sqrt{-\frac{c}{3}}$ and $z_-=-\sqrt{-\frac{c}{3}}$. The orbit of $z_+$ is limited if, and only if, the orbit of $z_-$ is limited. In this case, the Julia set is connected if the orbit of $z_+$ is bounded (or of $z_-$).  Note that $0$ is a fixed point of $f_c$, furthermore $f'_c(0)=c$, so when $|c|<1$ we have that $0$ is a fixed point attractor. In this case the orbit of the critical points is limited and the Julia set is connected.

In \cite{Brolin} Brolin obtained the following properties associated with the family \eqref{eq1}. Brolin showed that real interval $[-3,3]\subset \mathcal{M}$. If $c\in \mathbb{R}\setminus[-3,3]$ then both critical points of $f_c$ are in the infinite attractor basin $A(\infty)$ and in this case the Julia set $\mathcal{J}_c$ is totally disconnected.	
Furthermore, if $c\in(-1,1)$ then $z_+,z_-\in A(0)$ and $\mathcal{J}_c$ is Jordan curve. Finally, for $c\in \mathbb{C}$ we have $\mathcal{J}_c\subset \overline{D}_{\delta}$, where $\delta=\sqrt{1+|c|}$.

\begin{figure}[htp!]
	\centering
	\includegraphics[scale=0.35]{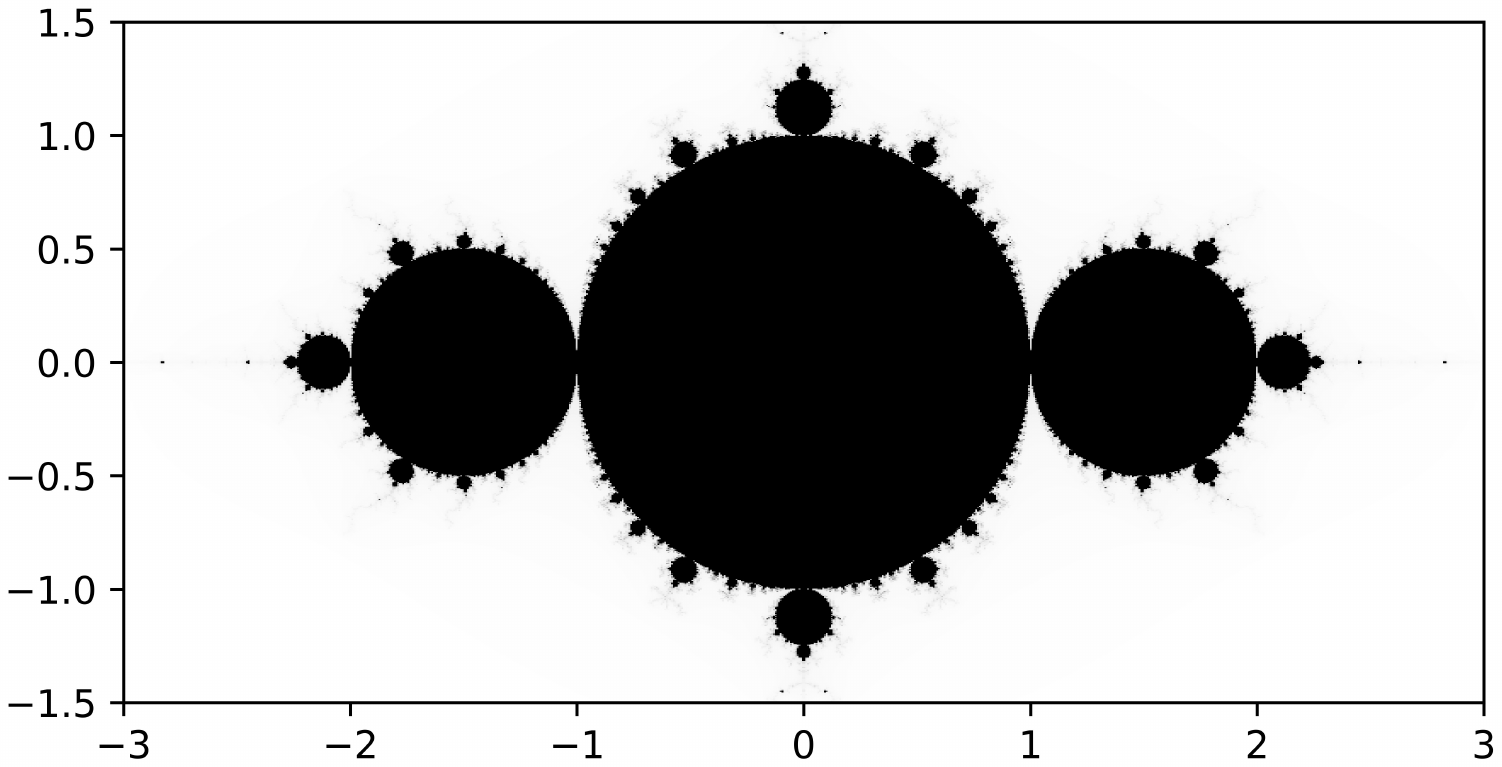}
	\caption{The Mandlbrot set of the cubic family $z^3+cz$.}
\end{figure}
%%%%%%%%%%%%%%%%%%%%%%%%%
%%%%%%%%%%%%%%%%%%%%%%%%%

\section{Green function (non-autonomous dynamic) }\label{sec:green}
A useful and important tool for investigations on non-autonomous dynamical systems is the Green's function of the basin of  infinite $\mathcal{A}_{\omega}(\infty)$. In this section, we introduce the notion of a Green's function for non-autonomous systems, together with some of its applications to random complex dynamics. We first present some preliminary results for the cubic family of polynomials \eqref{eq1} with $\omega=(c_n)\in \Omega$ and $W$ a closed disk of radius $\delta > 0$.

% (the reader can find the proof of these results in \cite{AGM}). 

\begin{lem}\label{lem:r}
	Let $0<\delta < 1$ and $\omega = (c_n) \in \Omega = \overline{D}_{\delta}^{\mathbb{N}}$. Then the filled Julia set $\mathcal{K}_{\omega}$ contains a closed
	disk $\overline{D}_r = \{ z\in \mathbb{C}; \, |z|\leq r\}$, which is an invariant domain (i.e., if $z\in \overline{D}_r$ then $f^n_{\omega}(z)\in \overline{D}_r$ for every natural number $n$), where $0< r < \sqrt{1-\delta}$. Also, when $z\in \overline{D}_r$ then $f^n_{\omega}(z) \to 0$ as $n\to\infty$.
\end{lem}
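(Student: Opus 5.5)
We argue directly from the triangle inequality, showing that the closed disk $\overline{D}_r$ is mapped into itself by every $f_c$ with $|c|\le\delta$, and that each step contracts by a uniform factor less than one. Fix $\delta\in(0,1)$ and $r$ with $0<r<\sqrt{1-\delta}$, so that $r^2+\delta<1$. Set
\[
q:=r^2+\delta<1 .
\]

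\textbf{Step 1 (one-step contraction).} For $|z|\le r$ and $|c|\le\delta$ we have
\[
|f_c(z)|=|z|\,|z^2+c|\le |z|\,(|z|^2+|c|)\le |z|\,(r^2+\delta)=q|z|.
\]
Since $q<1$, this gives $|f_c(z)|\le q|z|\le r$. Hence $f_{c}(\overline{D}_r)\subset \overline{D}_r$ for every parameter $c\in\overline{D}_\delta$. In particular $f_{c_k}(\overline{D}_r)\subset\overline{D}_r$ for every $k$, which is exactly the invariance in the sense defined in the Introduction.

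\textbf{Step 2 (iteration).} We proceed by induction on $n$, using $f^n_\omega=f_{c_n}\circ f^{n-1}_\omega$. At each stage the point $f^{n-1}_\omega(z)$ remains in $\overline{D}_r$ by Step 1. Applying Step 1 again yields
\[
|f^n_\omega(z)|\le q^n|z|\le q^n r .
\]
Two conclusions follow.
\begin{itemize}
\item The orbit stays in $\overline{D}_r$, so it is bounded and $\overline{D}_r\subset\mathcal{K}_\omega$.
\item Because $q^n r\to0$, we get $f^n_\omega(z)\to0$ as $n\to\infty$, and in fact uniformly on $\overline{D}_r$.
\end{itemize}

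No step here is a genuine obstacle; the argument is an elementary estimate. The only point needing care is the choice of $r$. The hypothesis $r<\sqrt{1-\delta}$ is precisely what guarantees $q<1$, and the hypothesis $\delta<1$ is what guarantees that such an $r>0$ exists.
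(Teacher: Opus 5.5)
Your proof is correct and follows the paper's argument: the bound $|f_c(z)|\le |z|(|z|^2+|c|)$ gives invariance of $\overline{D}_r$ by induction, together with a geometric contraction toward $0$. Your uniform factor $q=r^2+\delta$, in place of the paper's $z$-dependent $\gamma=|z|^2+\delta$, and your non-strict inequalities (which remain valid at $z=0$) make the argument slightly cleaner and give uniform convergence on $\overline{D}_r$.
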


\begin{proof}
	Let $f^n_{\omega}(z) = f_{c_n} \circ \dots \circ f_{c_1}(z)$. It is sufficient to prove that if $z \in \overline{D}_r$, then $|f^n_{\omega}(z)| \leq r$ for every $n \in \mathbb{N}$. 
	
	For $n = 1$, we have 
	\begin{align*}
		|f_{c_1}(z)| &\leq |z|(|z|^2 + |c_1|) \\
		&< r (r^2 + \delta) \leq r.
	\end{align*}
	Using induction on $n$, we conclude that $f^n_{\omega}(\overline{D}_r) \subset \overline{D}_r$. More precisely, if $|f^n_{\omega}(z)| \leq r$, then 
	\begin{align*}
		|f_{\omega}^{n+1}(z)| = |f_{c_{n+1}}(f^n_{\omega}(z))| &\leq |f^n_{\omega}(z)|(|f^n_{\omega}(z)|^2 + |c_{n+1}|) \\
		&< r (r^2 + \delta) \leq r.
	\end{align*}
	
	Furthermore, since $|z| < \sqrt{1-\delta}$, it follows that $|z|^2 + \delta < 1$. Set $\gamma = |z|^2 + \delta$. Then, $|f_{c_1}(z)| < \gamma |z|$, and by induction on $n$, we have 
	\[
	|f_{\omega}^n(z)| < \gamma^n |z|.
	\]
	This completes the proof.
\end{proof}

\begin{lem}[Escape criterion]\label{lem:R}
	When $|z| > R_{\delta} := \sqrt{1+\delta}$ then there exists $\beta > 1$ such that $|f^n_{\omega}(z)| > |z| \beta^n$ for all $n\in \mathbb{N}$.
\end{lem}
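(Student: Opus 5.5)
The plan is a one-step lower bound followed by induction. Throughout, $\omega=(c_n)$ has all $|c_n|\le\delta$; this is the standing assumption $\omega\in\Omega$ with $W\subset \overline{D}_{\delta}$.

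\textbf{One-step estimate.} For any $w$ with $|w|\ge |z|>R_\delta$ and any $|c|\le \delta$, the reverse triangle inequality gives
\[
|f_c(w)| = |w|\,|w^2+c| \ge |w|\,\bigl(|w|^2-\delta\bigr).
\]
Since $|w|\ge |z|$,
\[
|w|^2-\delta \ge |z|^2-\delta.
\]
Define
\[
\beta := |z|^2-\delta .
\]
The hypothesis $|z|>\sqrt{1+\delta}$ is exactly the statement $\beta>1$. So $\beta>1$ depends only on $|z|$ and $\delta$, and not on $\omega$. Combining the two displays,
\[
|f_c(w)|\ge \beta |w| \quad \text{whenever } |w|\ge |z|.
\]

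\textbf{Induction.} I will prove by induction on $n$ that
\[
|f^n_\omega(z)|\ge \beta^n|z|.
\]
The case $n=0$ is trivial. Suppose $|f^n_\omega(z)|\ge \beta^n|z|$. Because $\beta>1$, this gives $|f^n_\omega(z)|\ge|z|$, so the one-step estimate applies with $w=f^n_\omega(z)$ and $c=c_{n+1}$. It yields
\[
|f^{n+1}_\omega(z)|\ge \beta\,|f^n_\omega(z)| \ge \beta^{n+1}|z|.
\]

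\textbf{Strict inequality.} The statement asks for $>$ rather than $\ge$. I have two options. The first is to note that the first step is strict whenever $|c_1|<\delta$. The second, which I will use because it needs no condition on $c_1$, is to replace $\beta$ by any $\beta'\in(1,\beta)$. Then for $n\ge1$,
\[
|f^n_\omega(z)|\ge\beta^n|z|>\beta'^n|z|.
\]
I will remark that $\beta$ depends only on $|z|$ and $\delta$, and that one may take $\beta$ uniform on $\{|z|\ge R\}$ for any fixed $R>R_\delta$. This uniform version is what later sections (the Green function and Lemma~\ref{lem:R tilde}) actually need.

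\textbf{Main obstacle.} The only delicate point is keeping the orbit in the region $\{|w|\ge|z|\}$ at every step, so that the same $\beta$ works each time. This is exactly what the induction hypothesis provides, since $\beta^n|z|\ge|z|$.
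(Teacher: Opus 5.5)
Your proposal is correct and follows essentially the same route as the paper: both use the one-step bound $|f_c(w)|\ge |w|\,(|w|^2-|c|)$ with a fixed $\beta>1$ below $|z|^2-\delta$, iterated by induction. You make explicit a point the paper's induction leaves implicit, namely that the orbit stays in $\{|w|\ge |z|\}$ so the same $\beta$ works at every step, and your passage to $\beta'\in(1,|z|^2-\delta)$ cleanly secures the strict inequality.
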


\begin{proof}
	Since $|z| > R_{\delta}$, it follows that for every $n \in \mathbb{N}$, $|z|^2 - |c_n| > 1$. Hence, there exists $\beta > 1$ such that $|z|^2 - |c_n| > \beta$. We compute:
	\begin{align*}
		|f^1_{\omega}(z)| &\geq |z| (|z|^2 - |c_1|) \\
		&> |z| \beta, \\
		|f^2_{\omega}(z)| &\geq |f^1_{\omega}(z)| \left(|f^1_{\omega}(z)|^2 - |c_2| \right) \\
		&> |z| \beta^2.
	\end{align*}
	By an elementary induction argument, we obtain $|f^n_{\omega}(z)| > |z| \beta^n$ for all $n \in \mathbb{N}$.
\end{proof}

	The Green's function for the basin of infinity, measures the rate at which points escape to infinity under iteration by the rational map. Formally, it is defined as:

\begin{definition}
	Let $D$ be a proper sub-domain of $\widehat{\mathbb{C}}$. A Green's function for 
	$D$ is a map $g_D : D \times D \to (-1,1]$, such that for each $w \in D$:
	
	{\rm(1)} $g_D( - , w)$ is harmonic (for definition of harmonic functions see \cite{TR}) on $D \setminus \{w\}$ and bounded outside of every neighbourhood of $w$. Let us recall that a real-valued function $f$ on a domain $D$ of the complex plane $\mathbb{C}$ that satisfy the Laplace's equation $(\triangle f=0)$ on $D$ is said to be harmonic in $D$.
	
	{\rm (2)} $\lim_{z\to w} g_D(z,w) = \infty$ where 
	\[
	g_D(z,w) = \begin{cases}
		\log|z| + O(1) & w=\infty\\
		-\log|z-w| + O(1) & w\ne\infty.
	\end{cases}
	\]
	
	{\rm (3)} $\lim_{z\to\xi} g(z,w) = 0$ for nearly every $\xi\in\partial D$. Here ``nearly every''  means everywhere outside of some polar set.
\end{definition} 
For more details on Green's function and its properties in classic dynamic see for instance \cite{Beardon, Milnor}. Now, we shall define the non-autonomous Green's function and we prove some of its properties. These properties are mostly generalizations of their autonomous case.

Let us recall the well-known formula (see for instance \cite{Milnor}) for the autonomous case: for the map $f_c(z) = z^3+c z$ and its basin of infinity $\mathcal{A}_c(\infty)$, the Green's function with a pole at infinity is given by:
\[
g_c(z) = \lim_{n\to\infty} \frac{1}{3^n} \log|f^n_c(z)|.
\]
The following theorem is given in \cite[Theorem 4.2]{BB} and \cite[Theorem 2.1]{FS} for a general family of non-autonomous polynomials. Here give a reformulation for the case of cubic polynomials \eqref{eq1} and give a sketch of proof to verge the importance of it.
\begin{teo_green*}
	Let $W$ be a bounded Borel subset of $\mathbb{C}$, and $\Omega=W^{\mathbb{N}}$. Let $\mu$ be a Borel probability measure on $W$ , and $\mathbb{P}$, the product distribution on  $\Omega$  generated by $\mu$. For every $\omega\in\Omega$  the following limit exists: 
	\begin{eqnarray*}
		g_{\omega} : \mathcal{A}_{\omega}(\infty) &\to& \mathbb{R} \\
		z &\mapsto & \lim_{n\to\infty} \frac{1}{3^n}\log|f^n_{\omega}(z)|.
	\end{eqnarray*}
	The function $g_{\omega}$ is the Green's function on $\mathcal{A}_{\omega}(\infty)$ with pole at infinity.
	Putting $g_{\omega}$ equal to zero on the complement of $\mathcal{A}_{\omega}(\infty)$, this function extends continuously to the whole plane.
\end{teo_green*}
\begin{proof}
	We prove this theorem in four steps. In the first step, we show the existence of the limit. Then we prove that the function $g_{\omega}$ is harmonic, that $g_{\omega}(z)=\log|z| + O(1)$ when $z\rightarrow\infty$ and $g_{\omega}(z)\rightarrow 0$ when $z\to\partial\mathcal{A}_{\omega}(\infty)$.
	\\
	Step 1: Let $R>R_{\delta}$ and $c_n\in W$. By Lemma \ref{lem:R}, we have
	\[
	|f_{c_n}(z)| > |z| ~\text{for}~|z| > R.
	\]
	\begin{cla*}\label{af1}
		\[
		\frac{|z|^3}{R^2} \leq |z^3+c_n z|.
		\]
	\end{cla*}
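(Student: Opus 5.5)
The Claim should follow from the reverse triangle inequality combined with the choice $R>R_{\delta}=\sqrt{1+\delta}$. Throughout I work in the setting of Step 1: $|z|>R$ and $c_n\in W\subset D_{\delta}$, so that $|c_n|<\delta$. (If $W$ is taken to be the closed disk, I would use $|c_n|\le\delta$ instead; the argument is unchanged.)

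First, I factor out the dominant term and apply the reverse triangle inequality:
\[
|z^3+c_nz|\;\ge\;|z|^3-|c_n|\,|z| \;=\;|z|^3\Bigl(1-\frac{|c_n|}{|z|^2}\Bigr).
\]
So it suffices to show that $1-|c_n|/|z|^2\ge 1/R^2$.

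Second, since $|z|>R$ and $|c_n|\le\delta$, we have $|c_n|/|z|^2<\delta/R^2$. It is therefore enough to check that
\[
\frac{\delta}{R^2}\le 1-\frac{1}{R^2},
\qquad\text{that is,}\qquad 1+\delta\le R^2 .
\]
This holds precisely because $R>\sqrt{1+\delta}$. Chaining the two inequalities gives $|z^3+c_nz|\ge |z|^3/R^2$, which is the Claim.

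There is no real obstacle here. The only point needing care is where the threshold $R_{\delta}=\sqrt{1+\delta}$ enters: it is exactly the condition that makes the lower bound $1-\delta/R^2$ dominate $1/R^2$.

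Looking ahead to how the Claim is used, I note that it gives
\[
\log|z^3+c_nz|=3\log|z|-2\log R+\eta,\qquad \eta\in[0,\log 2],
\]
using also the upper bound $|z^3+c_nz|\le |z|^3(1+\delta/R^2)\le 2|z|^3$. The telescoping sums $\sum_n 3^{-n}\bigl(\log|f^{n}_{\omega}(z)|-3\log|f^{n-1}_{\omega}(z)|\bigr)$ are then uniformly bounded, which yields the existence of the limit defining $g_{\omega}$ in Step 1.
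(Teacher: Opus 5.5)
Your proof is correct and follows the paper's argument: the reverse triangle inequality gives $|z|^3(1-|c_n|/|z|^2)$, then $|z|>R$ lets you replace $|z|$ by $R$, and $R^2>1+\delta$ yields $1-\delta/R^2\ge 1/R^2$. One small slip lies outside the proof of the Claim: in your look-ahead remark, the Claim together with $|z^3+c_nz|\le 2|z|^3$ only gives $\eta\in[0,\,2\log R+\log 2]$, not $[0,\log 2]$, but boundedness of $\eta$ is all the telescoping argument needs.
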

	In fact, note that 
	\begin{eqnarray*}
		|c_n|<\delta &\Rightarrow& 1-\frac{|c_n|}{|z|^2}>1-\frac{|\delta|}{|z|^2}.\\
		|z|>R &\Rightarrow& 1-\frac{\delta}{|z|^2}>1-\frac{|\delta|}{R^2}.\\
		R>R_{\delta}&\Rightarrow& 1-\frac{\delta}{R^2}>\frac{1}{R^2}.
	\end{eqnarray*}
	Then, 
	\begin{eqnarray*}
		|z^3+c_nz| &>& |z|^3-|c_n||z|=|z|^3\left(1-\frac{|c_n|}{|z|^2}\right)\\
		&>& |z|^3\left(1-\frac{\delta}{|z|^2}\right)>|z|^3\left(1-\frac{\delta}{R^2}\right)\\
		&\geq &\frac{|z|^3}{R^2}.
	\end{eqnarray*}
Put
	\[
	U_k:=(f^k_{\omega})^{-1}(\Delta_R),
	\]
	where $\Delta_R = \mathbb{C}\setminus \overline{\mathbb{D}}_R$. Note that we have $U_k\subset U_{k+1}\subset \mathcal{A}_{\omega}(\infty)$. In fact, if $z\in U_k$ then $f^k_{\omega}(z)\in \Delta_R$ i.e., $|f^k_{\omega}(z)| > R$ and using the assumption stated above, one can conclude that 
	\[|f^{k+1}_{\omega}(z)|=|f_{c_{k+1}}(f^k_{\omega}(z))| > R.\] Therefore, $z\in U_{k+1}$. Also, it is evident that $\mathcal{A}_{\omega}(\infty) = \bigcup_{k=1}^{\infty} U_k$. Now, for $z\in U_n$, set
	\[
	g_n(z) := \frac{1}{3^n}\log \frac{|f^n_{\omega}(z)|}{R}.
	\]
	We claim that $g_n(z) \leq g_{n+1}(z)$. Actually, 
	\begin{eqnarray*}
		g_n(z) \leq g_{n+1}(z) &\Leftrightarrow& \log\frac{|f^n_{\omega}(z)|}{R} \leq \frac{1}{3} \log\frac{|f^{n+1}_{\omega}(z)|}{R}\\
		&\Leftrightarrow& \log |f^n_{\omega}(z)^3|-3\log R \leq \log |f^{n+1}_{\omega}(z)|-\log R\\
		&\Leftrightarrow& \log\frac{|f^n_{\omega}(z)|^3}{R^2} \leq \log(|f^n_{\omega}(z)^3+c_{n+1} f^n_{\omega}(z)|).
	\end{eqnarray*}
	The last inequality is valid by the  above affirmation since $z\in U_k$. Therefore, $\big(g_n\big)_{n=1}^{\infty}$ is a locally bounded increasing sequence of harmonic functions convergent to a function $G(z)$ which can be defined on $\mathcal{A}_{\omega}(\infty)$. We have:
	\begin{eqnarray*}
		G(z) &=& \lim_{n\to\infty} g_n(z) = \lim_{n\to\infty} \frac{1}{3^n}\log \frac{|f^n_{\omega}(z)|}{R}\\
		&=& \lim_{n\to \infty} \frac{\log |f^n_{\omega}(z)| - \log R}{3^n} \\
		&=& \lim_{n\to\infty} \frac{1}{3^n}\log|f^n_{\omega}(z)| = g_{\omega}(z).
	\end{eqnarray*}
	Step 2: The proof follows using Harnack's Convergence Theorem (see 
	\cite[Theorem 1.3.9]{TR}).\\
	Step 3: Since the set $W$ is bounded so for every $0<\epsilon <1$, there exists $R_{\epsilon} > 0$ such that for all $|z| > R_{\epsilon}$ and all $c_n\in W$ we have
	\[
	(1-\epsilon) |z|^3 \leq |f_{c_n}(z)| \leq (1+\epsilon) |z|^3.
	\] 
	We assume $R_{\epsilon}$ large enough so that for all $z\in\mathbb{C}$ with $|z| > R_{\epsilon}$ we also have $|f_{c_n}(z)| > |z| > R_{\epsilon}$, for all $c_n\in W$. Therefore, $\forall n\in\mathbb{N}$ using an elementary induction argument on $n$:
	\[
	(1-\epsilon)^{\frac{3^{n+1}-1}{2}} |z|^{3^n} \leq |f^n_{\omega}(z)| \leq (1+\epsilon)^{{\frac{3^{n+1}-1}{2}}} |z|^{3^n}.
	\]
	Now, taking logarithm from both sides we have
	\[
	\frac{3^{n+1}-1}{2\times3^n}\log(1-\epsilon) + \log|z| \leq \frac{1}{3^n} \log|f^n_{\omega}(z)| \leq \frac{3^{n+1}-1}{2\times3^n}\log(1+\epsilon) + \log|z|.
	\]
	When $n\to\infty$ we get
	\[
	\frac{3}{2}\log(1-\epsilon) \leq g_{\omega}(z)-\log|z|\leq \frac{3}{2}\log(1+\epsilon).
	\]
	Thereupon, when $z\to\infty$ one can conclude that 
	\[
	g_{\omega}(z)=\log|z| + O(1).
	\]
	Step 4: Let $\sigma$ denotes the shift map. Precisely speaking, if $\omega =(c_1, c_2,\dots)\in\Omega$ be a sequence of parameters then $\sigma(\omega) = (c_2,c_3,\dots)$. Let $\omega^m:=\sigma^m(\omega)=(c_m,c_{m+1},\dots)$. We have
	\[
	g_{\omega}(z) = \lim_{n\to\infty}\frac{1}{3^n}\log|f^n_{\omega}(z)| = \frac{1}{3^m}\lim_{n\to\infty}\frac{1}{3^{n-m}}\log|f^{n-m}_{\omega^m}(f^m_{\omega}(z))|.
	\]
	Choose $R>0$ large enough such that for all $\omega\in\Omega$, the set  $\{z\in\mathbb{C}; |z| >R\}$ is contained in $\mathcal{A}_{\omega}(\infty)$. Let $g$ any cubic polynomial, it is evident that for each $n\in\mathbb{N}$ and for all $z\in\mathbb{C}$ with $|z| < R$, there exists $\lambda > 1$ such that $|g^n(z)| < \lambda^{3^n}$. Also a same affirmation is valid for the random case, this means that for any $\omega\in\Omega$ we can say that $|f_{\omega}^n(z)| < \lambda^{3^n}$. From this we deduce that there exists a constant $\lambda_R$ independent of $k,m$ and $\omega$ such that for $|z| \leq R$ the following holds
	\[
	\frac{1}{3^{k-m}} \log|f^{k-m}_{\omega^m}(z)| < \lambda_R.
	\]
	Evidently, $g_{\omega}(z) < \lambda_R / 3^m$ on the set $V_m:=\{ z\in\mathbb{C}; |f^m_{\omega}(z) \leq R\}$. The sequence $(V_m)$ is a decreasing sequence of sets containing the Julia set. Therefore, $\lim\limits_{z\to\partial\mathcal{A}_{\omega}} g_{\omega}(z) = 0$.
\end{proof}
\begin{coro}\label{cor:3omega}
	If $\omega=(c_n)$ then $g_{\sigma(\omega)}(f_{\omega}(z)) =3 g_{\omega}(z)$
\end{coro}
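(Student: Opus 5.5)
The plan is to read the identity directly off the limit formula in the Green's function Theorem. The shift structure in Step 4 of its proof does almost all the work.

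Write $\omega=(c_1,c_2,\dots)$ and $\sigma(\omega)=(c_2,c_3,\dots)$. By definition of the non-autonomous iterates,
\[
f^{n}_{\sigma(\omega)}\bigl(f_{\omega}(z)\bigr)=f_{c_{n+1}}\circ\cdots\circ f_{c_2}\bigl(f_{c_1}(z)\bigr)=f^{n+1}_{\omega}(z)
\]
for every $n\in\mathbb{N}$ and $z\in\mathbb{C}$. This composition identity is the only structural input; everything else is passing to limits.

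I will first handle $z\in\mathcal{A}_{\omega}(\infty)$. The composition identity shows that $f^{n+1}_{\omega}(z)\to\infty$ exactly when $f^{n}_{\sigma(\omega)}(f_\omega(z))\to\infty$. Hence $f_\omega(z)\in\mathcal{A}_{\sigma(\omega)}(\infty)$, and $\sigma(\omega)\in\Omega$ since $\Omega=W^{\mathbb{N}}$ is shift invariant. The Green's function Theorem, applied to $\sigma(\omega)$, then gives
\[
g_{\sigma(\omega)}(f_\omega(z))=\lim_{n\to\infty}\frac{1}{3^{n}}\log\bigl|f^{n+1}_{\omega}(z)\bigr|
=3\lim_{n\to\infty}\frac{1}{3^{n+1}}\log\bigl|f^{n+1}_{\omega}(z)\bigr|=3\,g_\omega(z).
\]
The last limit is a reindexed tail of the sequence whose limit defines $g_\omega(z)$, so it exists by the same theorem.

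For $z\notin\mathcal{A}_{\omega}(\infty)$, i.e.\ $z\in\mathcal{K}_\omega$, the orbit $(f^{n}_\omega(z))$ is bounded. By the composition identity, the orbit of $f_\omega(z)$ under $\sigma(\omega)$ is bounded as well, so $f_\omega(z)\in\mathcal{K}_{\sigma(\omega)}$. Using the continuous extension by zero in the theorem, both sides vanish.

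I do not expect a real obstacle here. The only point needing care is the bookkeeping of the invariance $f_\omega^{-1}(\mathcal{K}_{\sigma(\omega)})=\mathcal{K}_\omega$, which ensures that the extension-by-zero convention is applied consistently on both sides. That invariance is exactly what the two cases above establish.
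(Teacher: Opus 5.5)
Your proposal is correct and is essentially the paper's proof: both rest on the composition identity $f^{n}_{\sigma(\omega)}(f_\omega(z))=f^{n+1}_\omega(z)$, followed by rewriting $\frac{1}{3^n}\log|f^{n+1}_\omega(z)|=3\cdot\frac{1}{3^{n+1}}\log|f^{n+1}_\omega(z)|$ and passing to the limit. Your separate checks are bookkeeping the paper leaves implicit: that $f_\omega(z)\in\mathcal{A}_{\sigma(\omega)}(\infty)$ when $z\in\mathcal{A}_\omega(\infty)$, and that both sides vanish for $z\in\mathcal{K}_\omega$.
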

\begin{proof} \begin{eqnarray*}
		g_{\sigma(\omega)}(f_{\omega}(z)) &=& \lim_{n\to\infty}\frac{1}{3^n}\log | f^n_{\sigma(\omega)}(f_{\omega}(z)| \\
		&=& \lim_{n\to\infty} \frac{3}{3^{n+1}}\log | f_{c_{n+1}}\circ\dots\circ f_{c_2}(f_{\omega}(z))| \\
		&=& 3\lim_{n\to\infty} \frac{1}{3^{n+1}} \log |f^{n+1}_{(c_{n+1})}(z)|\\
		&=& 3 g_{\omega}(z).
	\end{eqnarray*}
\end{proof}
%%%%%%%%%%%%%%%%%%%%%%%%%
%%%%%%%%%%%%%%%%%%%%%%%%%
\section{Proofs of the Main Results}\label{sec:Top}
We devote this section to investigate some facts on topological properties of the Julia set $\mathcal{J}_{\omega}$ with $\omega\in \Omega$ for some $\delta > 0$. 

The critical points of $f_{\omega}$ are $z_+=\sqrt{\frac{-c_1}{3}}$ and $z_-=-\sqrt{\frac{-c_1}{3}}$.
Observe that, if the orbit of $z_{+}$ is bounded, then the orbit of the critical point $z_{-} $ is also bounded.
In fact, it is enough to note that $f_{\omega}(z_+)=-f_{\omega}(z_-)$, so by a simple induction argument we have  $f^n_{\omega}(z_+)=-f^n_{\omega}(z_-)$. Thus we have a dichotomy about the orbit of the critical points, either both orbits are limited or both converge to $\infty$.

%
%
%
%\begin{rem}
%	\item[(I)] There exists a sequence $\nu =(c_n)\in \Omega$, such that its coresponded  Julia set $\mathcal{J}_{\nu}$ is disconnected but it is not totally dis connected. in other words, $\mathcal{D} \not\subset \mathcal{T}$.
%	\item[(II)] {\color{red} T é aberto? Achamos que não.
%		Dado qualquer aberto em Omega existe uma seq tal que J é desconexo mas não é tot desconexo.}
%	
%	
%\end{rem}
%\begin{proof}
	%	Let $[a, b] \subset(0,1)$. We set $c_{n_k}=-q_k$ and $c_n=0$ for $n \neq n_k$, where the sequences $\left(n_k\right)$ in $\mathbb{N}$ and $\left(q_k\right)$ in the open interval $(1,2)$ are chosen inductively as follows. We take $n_1:=1$ and $q_1 \in(1,2)$ such that $f_{c_1}([a, b]) \subset(-1,0)$. Then $f_{c_1}(0)=-q_1$. If $n_k$ and $q_k$ have already been constructed, we choose $n_{k+1}>n_k+1$ so large that $\left(f_{c_{n_{k+1}-1}} \circ \cdots \circ f_{c_{n_k+1}}\right)\left(-q_k\right)>4$. Then we take $q_{k+1} \in(1,2)$ so that $f_{c_{n_{k+1}}}\left(F_{n_{k+1}-1}([a, b])\right) \subset(-1,0)$, and thus $f_{c_{n_{k+1}}}(0)=-q_{k+1}$. Since $\left|c_n\right|<2$ we have that $\Delta_2$ is an invariant domain which implies $\mathcal{C}_{\left(c_n\right)} \subset \mathcal{A}_{\left(c_n\right)}(\infty)$. Furthermore, we see that $[a, b] \subset \mathcal{K}_{\left(c_n\right)}$. This means that the Julia set cannot be totally disconnected but it is disconnected by Theorem 1.1.
%\end{proof}
%\begin{teo}
%	Suppose that $\omega\in\Omega$ with $\delta > 3$. The set $\mathcal{T}$ defined by \eqref{eq:T} is dense in $\Omega$.
%\end{teo}
\begin{proof}[{\bf Proof of Theorem \ref{teo: T dense}}]
	Let $\omega_0 = (c_{n,0})\in \Omega$. The idea of the proof is to define a sequence of sequences in $\Omega$ that converges to $\omega_0$, whose corresponding Julia set is totally connected. Let $c\in D_{\delta}\setminus \mathcal{M}$ be a point in the coordinates axes, where $\mathcal{M}$ is the connectedness locus of family of cubic polynomials $z^3+cz$. Define
	\[
	\omega_m = (c_{n,m}) = \begin{cases}
		c_{n,0} & n = 1, \dots , m\\
		c & n>m.
	\end{cases}
	\]
	It is evident that  $c_{n,m} \to c_{n,0}$ as $m\to\infty$. Also using affirmations in \S\ref{sec:classic} and the fact that $\delta > 3$, we know that as  $c\in D_{\delta}\setminus \mathcal{M}$ so the Julia set $\mathcal{J}_c$ is totally disconnected (see \cite[Theorem 13.1]{Brolin}). We have
	\[
	\mathcal{J}_{\omega_m} = (f_{c_{m,0}} \circ \dots \circ f_{c_{1,0}})^{-1}(\mathcal{J}_c). 
	\]
	In fact, this happens due to definition of $\omega_m$ and \cite[Lemma 1.1]{Alves-Salarinoghabi}. Also, since $\mathcal{J}_c$ is totally disconnected and $f_{c_{m,0}} \circ \dots \circ f_{c_{1,0}}$ is  continuous open surjection then $\mathcal{J}_{\omega_m}$ is also totally disconnected. Actually, this is well known fact that for a continuous open surjection $g:X\to Y$ between topological spaces, the preimage of a totally disconnected set $T\subset Y$ is also totally disconnected. 
\end{proof}
%\begin{teo}
%The set $\mathcal{D}_{\infty}$ defined by \eqref{eq:Dinf} has empty interior.
%\end{teo}
%\begin{proof}
%
%\end{proof}

%%%%%%%%%%%%%%%%
%\begin{teo}\label{teo:D}
%	The set $\mathcal{D}$ given in \eqref{eq:D} is an open and dense subset of $\Omega$ provided that $\delta>3$.
%\end{teo}
\begin{proof}[{\bf Proof of Theorem \ref{teo:D}}]
	First, let demonstrate that the set $\mathcal{D}$ is open. For this reason, let $(c_{n,0})$ be a point of $\mathcal{D}$. There exists a critical point $z_0\in\mathcal{C}_{(c_{n,0})}$ such that 
	\[f^j_{(c_{n,0})}(z_0)=f_{c_{j,0}}\circ\dots\circ f_{c_{1,0}}(z_0)\in \mathcal{C}_{j}\]
	for some $j\in\mathbb{N}$ and also $f^n_{(c_{n,0})}(z_0) \to \infty$ as $n\to \infty$. This means that there exists a natural number $N>j$ such that
	\begin{eqnarray}\label{eq:f^Nj}
		\left| f_{c_{N,0}}\circ\dots\circ f_{c_{j+1,0}}\left(f^j_{(c_{n,0})}(z_0)\right)\right| > R_{\delta}. 
	\end{eqnarray}
	The recent composition depends continuously on $\left(c_{j+1,0},\dots,c_{N,0}\right)$, therefore there exists neighborhoods $U_{j+1},\dots , U_N$ of $c_{j+1,0},\dots,c_{N,0}$ respectively where  $U:=U_{j+1}\times\dots \times U_N$ is a neighborhood of $\left(c_{j+1,0},\dots,c_{N,0}\right)$ such that \eqref{eq:f^Nj} happens for all $(c_{j+1},\dots ,c_N)\in U$.  Set 
	\[\mathcal{U}:= D_{\delta}^j \times U\times D_{\delta}^{\mathbb{N}}.\]
	This is a neighborhood of $(c_{n,0})$ with respect to the product topology of $D_{\delta}^{\mathbb{N}}$. Now we shall prove that $\mathcal{U}\subset \mathcal{D}$. Let $(c_n)\in\mathcal{U}$. Since all polynomials belong to the ring $\mathbb{C}[z]$ so there exists $\psi\in\mathbb{C}$ witch satisfies in the following equation.
	\[
	f_{c_j}\circ\dots\circ f_{c_1}(z) = f_{c_{j,0}}\circ\dots\circ f_{c_{1,0}}(z_0).
	\]
	We have $(c_{j+1},\dots,c_N)\in U$ thus $|f^N_{(c_n)}(\psi)| > R_{\delta}$ and hence $(c_n)\in\mathcal{D}$.
	
	Finally, it remains to prove that $\mathcal{D}$ is dense in $\Omega$. Let $(c_{n,0})\in\mathcal{D}$. Consider the sequence of sequences $\big((c_{n,m})\big)$ defined as follows:
	\[
	c_{n,m} = \begin{cases}
		c_{n,0} & n=1,\dots,m\\
		c & n>m
	\end{cases}
	\]
	where $c\in \Omega \setminus \mathcal{M}$ and $\mathcal{M}$ is the connectedness locus of family of complex polynomials $f_c$ (in classic iteration). Evidently
	\[
	\lim_{m\to\infty} c_{n,m} = c_{n,0},
	\] 
	and the Julia set of $f_c$, denoted by $\mathcal{J}_c$, is disconnected as $c\notin \mathcal{M}$. Also one can prove that
	\[
	\mathcal{J}_{(c_{n,m})} = \left( f_{c_{m,0}} \circ \dots \circ f_{c_{1,0}}\right)^{-1}(\mathcal{J}_c).
	\]
	Therefore, $\mathcal{J}_{(c_{n,m})}$ is also disconnected and thus $(c_{n,m})\in\mathcal{D}$. 
\end{proof}
%
%\begin{lem}
%Let $K$ be compact and connected, $c\in \mathbb{C}$, $w_+$ and $w_-$ the critical values of $f_c$ and let $D_{\infty}$ be the unbounded component of $\widehat{\mathbb{C}}\setminus K$. Then $f^{-1}_{c}(K)$ is connected if and only if $w_+,w_-\notin D_{\infty}$. If $f^{-1}_c(K)$ is disconnected, then $f^{-1}_c(K)$ contains exactly three components.
%\end{lem}
%\begin{proof}If $f^{-1}_{c}(K)$ is connected then each component of $f^{-1}(\mathbb{C}\setminus K)$ is simply connected, so none of them contain a critical point, which implies that $D_{\infty}$ does not contain a critical value.
%
%If $W=f^{-1}_c(K)$ is disconnected then  $w_+,w_- \notin K$, it follows that $f_W:W\rightarrow K$ is a cover map of degree 3, therefore $f^{-1}_c(K)$ contains exactly three components. For more details see \cite[Lemma 5.7.2]{Beardon}. 
%
%\end{proof}

%\subsection{Totally disconnectedness of Julia set}
It is evident that there exists a large enough number $R > R_{\delta}=\sqrt{1+\delta} $ such that for every $\omega \in \Omega$, we have $f_{\omega}(\Delta_R) \subset \Delta_{3R}$ and also $\Delta_R \subset \mathcal{A}_{\omega}(\infty)$, where $\Delta_R = \mathbb{C}\setminus \overline{D}_R$.
\begin{prop}\label{prop:1}
	Using notations stated above, for every $\epsilon>0$ we have:
	\[
	\left|g_{\omega}(z) - \log|z| \right| < \epsilon,\quad \forall z\in \Delta_R.
	\]
\end{prop}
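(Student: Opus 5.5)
The plan is to write $g_{\omega}(z)-\log|z|$ as a telescoping series and bound each term uniformly on $\Delta_R$. One caveat first: the bound we obtain is of order $\delta/R^2$. So the statement must be read with $R$ allowed to depend on $\epsilon$: for every $\epsilon>0$ there is $R=R(\epsilon,\delta)>R_{\delta}$, satisfying the standing conditions $f_{\omega}(\Delta_R)\subset\Delta_{3R}$ and $\Delta_R\subset\mathcal{A}_{\omega}(\infty)$, for which the inequality holds uniformly in $\omega\in\Omega$. Enlarging $R$ preserves those conditions, so this is harmless. This is also the content of Step 3 of the proof of the Green's function Theorem, where $R_\epsilon$ was chosen with $(1-\epsilon)|z|^3\le|f_{c_n}(z)|\le(1+\epsilon)|z|^3$.

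Fix $z\in\Delta_R$ and put $z_0=z$, $z_n=f^n_{\omega}(z)$. By Lemma \ref{lem:R} (or by the invariance $f_{c_n}(\Delta_R)\subset\Delta_R$ used in Step 1 of the Green's function Theorem), every $z_n$ lies in $\Delta_R$, so $|z_n|>R$ for all $n$. By the Green's function Theorem,
\[
g_{\omega}(z)-\log|z|=\lim_{N\to\infty}\sum_{n=0}^{N-1}\Big(\frac{1}{3^{n+1}}\log|z_{n+1}|-\frac{1}{3^{n}}\log|z_n|\Big)=\sum_{n=0}^{\infty}\frac{1}{3^{n+1}}\log\Big|1+\frac{c_{n+1}}{z_n^{2}}\Big|,
\]
using $z_{n+1}=z_n^3\big(1+c_{n+1}z_n^{-2}\big)$.

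Each term is now estimated directly. Since $|c_{n+1}|<\delta$ and $|z_n|>R$, we have $|c_{n+1}/z_n^2|<\delta/R^2$. Choosing $R$ with $\delta/R^2\le 1/2$ gives $\big|\log|1+w|\big|\le 2|w|$ for $|w|\le 1/2$. Hence every term is at most $2\delta/R^2$ in absolute value, and summing the weights $\sum_{n\ge0}3^{-(n+1)}=1/2$ gives
\[
\big|g_{\omega}(z)-\log|z|\big|\le \frac{\delta}{R^2}.
\]
Taking $R>\sqrt{\delta/\epsilon}$ (together with the previous requirements) yields the claim, uniformly in $z\in\Delta_R$ and $\omega\in\Omega$.

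The only delicate point is the one flagged above: the statement cannot hold for a single $R$ and all $\epsilon$. At $z$ with $|z|$ just above $R$ the difference is genuinely of size comparable to $\delta/R^2$, so $R$ must be taken depending on $\epsilon$. Everything else is the routine telescoping estimate, and convergence of the series is automatic because the terms are uniformly bounded with geometric weights.
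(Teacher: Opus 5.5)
Your proof is correct and follows the paper's route. You use the same telescoping identity $g_{\omega}(z)-\log|z|=\sum_{n\ge0}3^{-(n+1)}\log|1+c_{n+1}/(f^n_{\omega}(z))^2|$, after which the paper just says ``choose $R>R_{\delta}$ large enough'' (so it too lets $R$ depend on $\epsilon$, as in your caveat), and your explicit bound $\delta/R^2$, uniform in $\omega$, supplies the estimate the paper leaves implicit.
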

\begin{proof}
	For each $z\in \Delta_R$, set $a_0(z)=\log|z|$ and $\displaystyle a_n(z) = \frac{1}{3^n}\log |f^n_{\omega}(z)|$. Thereupon,
	\begin{eqnarray*}
		a_{n+1}(z) &=& \frac{1}{3^{n+1}}\log |f^{n+1}_{\omega}(z)| = \frac{1}{3^{n+1}}\log \big|\left(f^n_{\omega}(z)\right)^3+c_{n+1} \left(f^n_{\omega}(z)\right)\big| \\
		&=&  \frac{1}{3^n}\frac{1}{3} \log\big|\left(f^n_{\omega}(z)\right)^3 \big(1+\frac{c_{n+1}}{\left(f^n_{\omega}(z)\right)^2}\big) \big| \\
		&=& \frac{1}{3^n}\left(\log |f^n_{\omega}(z)| + \frac{1}{3} \log\big|\big(1+\frac{c_{n+1}}{\left(f^n_{\omega}(z)\right)^2}\big)\big|   \right)\\
		&=& a_n(z) + \frac{1}{3^{n+1}} \log\big|\big(1+\frac{c_{n+1}}{\left(f^n_{\omega}(z)\right)^2}\big)\big|.
	\end{eqnarray*}
	Using above relation one can conclude that
	\[
	a_n(z) = a_0(z) + \sum_{i=0}^n \frac{1}{3^{i+1}} \log\big|\big(1+\frac{c_{i+1}}{\left(f^i_{\omega}(z)\right)^2}\big)\big|.
	\]
	Therefore, when $n\to \infty$ we have
	\[
	g_{\omega}(z)-a_0(z) =  g_{\omega}(z)-\log|z|    =  \sum_{n=0}^{\infty} \frac{1}{3^{n+1}} \log\big|\big(1+\frac{c_{n+1}}{\left(f^n_{\omega}(z)\right)^2}\big)\big|.
	\]
	We can choose $R>R_{\delta}$ large enough such that $\left|g_{\omega}(z) - \log|z| \right| < \epsilon$.
\end{proof}

\begin{rem}
	Note that it is evident that for every $\omega\in\Omega$ the filled Julia set $\mathcal{K}_{\omega}$ is contained in the closed disk $\overline{D}_R$ where $R\geq R_{\delta} = \sqrt{1+\delta}$. 
	%Also for such $R$, if $\tilde{R}\in (R, R^3-\delta R )$ then we have $f_{\omega}^{-1}(\overline{D}_{\tilde{R}}) \subset \overline{D}_R$.
\end{rem}
%\begin{proof}
%If $z\in f_{\omega}^{-1}(\overline{D}_{\tilde{R}})$ and $z\notin \overline{D}_R$, then we have
%\begin{eqnarray*}
%R^3-\delta R > \tilde{R} \geq |f_{\omega}(z)| &> & |z|^3-|z| |c_1|\\
%& >& R^3-\delta |z|
%\end{eqnarray*}
%\end{proof}

\begin{lem}\label{lem:R tilde}
	For every $R>R_{\delta}$, fix $\tilde{R} \in (R, R^3-\delta R)$. Then for every $\omega\in\Omega$ we have
	\[
	f_{\omega}^{-1}(D_{\tilde{R}}) \subset D_R. 
	\]
	Therefore, for every $z\in\Delta_R =\mathbb{C}\setminus \overline{D}_R$,
	\[
	G_{\delta}:= \sup_{R\leq |z| \leq \tilde{R}}(g_{\omega}(z)) <\infty,
	\]
\end{lem}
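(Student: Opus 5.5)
The proof splits into two parts: a triangle-inequality estimate for the inclusion, and a compactness argument for the finiteness of $G_{\delta}$.

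\emph{The inclusion.} We argue by contraposition: if $|z|\ge R$, then $|f_{\omega}(z)|\ge \tilde{R}$. Fix $\omega\in\Omega$, so $|c_1|<\delta$ (or $\le\delta$ in the closed-disk setting). For $|z|\ge R$,
\[
|f_{\omega}(z)| = |z|\,\bigl|z^2+c_1\bigr| \ge |z|\bigl(|z|^2-\delta\bigr).
\]
The function $t\mapsto t(t^2-\delta)=t^3-\delta t$ has derivative $3t^2-\delta$. This is positive for $t\ge R$, because $R^2>1+\delta>\delta/3$. So the function is increasing on $[R,\infty)$, and $|f_{\omega}(z)|\ge R^3-\delta R>\tilde{R}$. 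Hence $f_{\omega}(z)\notin D_{\tilde{R}}$, which gives $f_{\omega}^{-1}(D_{\tilde{R}})\subset D_R$. The interval $(R,R^3-\delta R)$ is nonempty since $R^2-\delta>1$, so the choice of $\tilde{R}$ makes sense.

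\emph{Finiteness of $G_{\delta}$.} The annulus $\{R\le |z|\le\tilde{R}\}$ lies in $\Delta_{R'}$ for any $R_{\delta}<R'<R$, hence in $\mathcal{A}_{\omega}(\infty)$ by Lemma~\ref{lem:R}. There $g_{\omega}$ is continuous by the Green's function Theorem, and a continuous function on a compact set is bounded. To get a bound uniform in $\omega$ (the notation $G_{\delta}$ suggests the authors intend this), I would instead use the expansion from Proposition~\ref{prop:1}:
\[
g_{\omega}(z)-\log|z|=\sum_{n\ge0}3^{-(n+1)}\log\bigl|1+c_{n+1}/(f^n_{\omega}(z))^2\bigr|.
\]
By Lemma~\ref{lem:R}, $|f^n_{\omega}(z)|>|z|\ge R>R_{\delta}$. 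So each ratio satisfies $|c_{n+1}/(f^n_{\omega}(z))^2|\le \delta/R^2<\delta/(1+\delta)<1$. Each logarithm is therefore bounded above by $\log 2$, and
\[
g_{\omega}(z)\le \log\tilde{R}+\tfrac12\log 2
\]
uniformly in $\omega$ and in $z$ in the annulus. This gives $G_{\delta}<\infty$.

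The main obstacle is this uniformity in $\omega$, which is the only step that is not routine. It needs the series bound rather than plain compactness, together with the observation that the orbit stays outside $\overline{D}_R$, where the ratio $|c/w^2|$ is uniformly less than $1$.
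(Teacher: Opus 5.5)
Your proof is correct and follows the paper's route. For the inclusion you use the same lower bound $|f_\omega(z)|\ge |z|^3-\delta|z|$ compared against $R^3-\delta R$; your monotonicity argument simply replaces the paper's Cardano-root computation showing $R_0<R$. For $G_\delta$ you use the same expansion behind Proposition~\ref{prop:1}, and your explicit bound $g_\omega\le\log\tilde R+\tfrac12\log 2$ is a small improvement: it holds uniformly in $\omega$ for the given $R>R_\delta$, whereas Proposition~\ref{prop:1} as stated needs $R$ ``large enough''.
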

\begin{proof}
	Let $z\in f_{\omega}^{-1}(D_{\tilde{R}})$. Thus, $f_{\omega}(z)\in D_{\tilde{R}}$
	\begin{eqnarray*}
		|z|^3-\delta |z| < |z|^3 - |c_1| |z| \leq |f_{\omega}(z)| < \tilde{R}.
	\end{eqnarray*}
	Solving the inequality $|z|^3-\delta |z|-\tilde{R} < 0$, we get $|z|<R_0$ where
	\[
	R_0 = \frac{1}{6}\frac{\left(108\tilde{R}+12\sqrt{(-12\delta^3+81\tilde{R}^2)}\right)^{2/3}+12\delta}{\left(108\tilde{R}+12\sqrt{(-12\delta^3+81\tilde{R}^2)}\right)^{1/3}}.
	\]
	Since $\tilde{R} < R^3-\delta R$, so by an straightforward calculation we can conclude that $R_0 < R$. Therefore $z\in D_R$. The rest is then obvious due to Proposition \ref{prop:1}.
\end{proof}
Throughout the remainder of this paper, let $R, R_{\delta}$ and $\tilde{R}$ as given in Lemma \ref{lem:R tilde}.
\begin{prop}\label{prop:2}
	For every $\delta>0$ and $\omega\in\Omega$,
	\[
	\sup_{z\in D_R} g_{\omega}(z) \leq \log R +1.
	\]
	In particular, for $\hat{z}$ a fixed critical point of $f^n_{\omega}$, the function $\omega\mapsto g_{\omega}(\hat{z})$ is bounded above and
	\[
	\sup_{\omega\in\Omega} g_{\omega}(\hat{z}) \leq \log R+1.
	\]
\end{prop}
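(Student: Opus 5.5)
The plan is to split $D_R$ into the part inside the filled Julia set and the part in the basin of infinity, and to bound $g_{\omega}$ on each piece using the sup over a circle together with the maximum principle.

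On $\mathcal{K}_{\omega}$ we have $g_{\omega}=0$, by the Green's function Theorem. So it suffices to bound $g_{\omega}$ on $D_R\cap\mathcal{A}_{\omega}(\infty)$.

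For the circle $|z|=R$, I would use the expansion obtained in the proof of Proposition \ref{prop:1}:
\[
g_{\omega}(z)-\log|z|=\sum_{n\ge0}\frac{1}{3^{n+1}}\log\Bigl|1+\frac{c_{n+1}}{(f^n_{\omega}(z))^2}\Bigr|.
\]
For $|z|\ge R$, the escape criterion (Lemma \ref{lem:R}) gives $|f^n_{\omega}(z)|\ge |z|\ge R$ for every $n$. Since $R>R_\delta=\sqrt{1+\delta}$, this yields
\[
\Bigl|\frac{c_{n+1}}{(f^n_{\omega}(z))^2}\Bigr|\le \frac{\delta}{R^2}<1,
\]
and hence each logarithm is at most $\log(1+\delta/R^2)\le \log 2<1$. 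Summing $\sum 3^{-(n+1)}=1/2$ then gives $g_{\omega}(z)\le \log|z|+\tfrac12\log 2\le \log|z|+1$ on $\{|z|\ge R\}$. In particular, $g_{\omega}\le \log R+1$ on the circle $|z|=R$.

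Next I would apply the maximum principle. The Green's function Theorem tells us that $g_{\omega}$ is continuous on $\mathbb{C}$, harmonic on $\mathcal{A}_{\omega}(\infty)$, and zero on $\mathcal{K}_{\omega}$. It is therefore subharmonic on $\mathbb{C}$; concretely, $g_{\omega}=\max(g_{\omega},0)$ and $g_{\omega}$ is a limit of the subharmonic functions $3^{-n}\log|f^n_{\omega}|$. The maximum principle for subharmonic functions on $\overline{D}_R$ then gives
\[
\sup_{D_R}g_{\omega}\le \max_{|z|=R}g_{\omega}\le \log R+1.
\]
If one prefers to avoid subharmonicity, the same argument works on each component of $D_R\cap\mathcal{A}_{\omega}(\infty)$. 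On such a component $g_{\omega}$ is harmonic, and its boundary values are either $0$ (on $\partial\mathcal{K}_{\omega}$, by continuity) or at most $\log R+1$ (on $|z|=R$).

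The particular statement follows at once. Any critical point $\hat z$ of $f^n_{\omega}$ lies in $D_R$, because points with $|z|\ge R$ have nonvanishing derivative along the orbit: the orbit stays outside $\overline{D}_{R_\delta}$, while the critical points of each $f_c$ have modulus $\sqrt{|c|/3}<R$. Hence $g_{\omega}(\hat z)\le\log R+1$ uniformly in $\omega$.

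The main obstacle is the rigorous justification of subharmonicity and continuity up to $\partial\mathcal{K}_{\omega}$. I would take both from the Green's function Theorem as stated.
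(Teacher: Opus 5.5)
Your proof is correct. It uses the same input as the paper, the comparison of $g_\omega(z)$ with $\log|z|$ from Proposition~\ref{prop:1}, but it is more complete.

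The paper's proof is the single line ``use $\epsilon=1$ in Proposition~\ref{prop:1}''. That proposition controls $g_\omega$ only on $\Delta_R$, i.e.\ outside the disk, and only after $R$ has been enlarged depending on $\epsilon$. Your argument supplies three things that line leaves implicit:
\begin{enumerate}
\item From the series in the proof of Proposition~\ref{prop:1} you read off the one-sided bound $g_\omega(z)\le\log|z|+\tfrac12\log(1+\delta/R^2)$ on $|z|\ge R$. This holds for every $R>R_\delta$, with no enlargement of $R$. That fits the later convention that $R$ is the arbitrary $R>R_\delta$ of Lemma~\ref{lem:R tilde}.
\item You use the maximum principle to transfer the bound from the circle $|z|=R$ into $D_R$. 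This step is genuinely needed, since the statement concerns the inside of the disk.
\item You check that every critical point of $f^n_\omega$ has modulus at most $R_\delta<R$. The ``in particular'' clause needs this, and the paper never verifies it.
\end{enumerate}

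One caution on the maximum-principle step. Of your two justifications, the componentwise one is the sound one. On each component of $D_R\cap\mathcal{A}_\omega(\infty)$, $g_\omega$ is harmonic and continuous on the closure. Its boundary values are $0$ on $\partial\mathcal{A}_\omega(\infty)\subset\mathcal{K}_\omega$ and at most $\log R+1$ on $|z|=R$, and $\log R>0$.

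The remark that $g_\omega$ is a limit of the subharmonic functions $3^{-n}\log|f^n_\omega|$ does not justify subharmonicity. Pointwise limits of subharmonic functions need not be subharmonic. Moreover, on $\mathcal{K}_\omega$ this limit need not be $0$: at the common fixed point $z=0$ it is $-\infty$.

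Subharmonicity of $g_\omega$ on $\mathbb{C}$ does hold, for a simpler reason. $g_\omega$ is continuous and nonnegative, and it is harmonic on $\mathcal{A}_\omega(\infty)$. Hence the local sub-mean-value inequality holds at every point: with equality near points of $\mathcal{A}_\omega(\infty)$, and trivially at points of $\mathcal{K}_\omega$, where $g_\omega=0$.
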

\begin{proof}
	The proof is evident using $\epsilon = 1$ in Proposition \ref{prop:1}.
\end{proof}
\begin{dfn}
	Let $z\in D_R$ and $\omega\in\Omega$. The escape time of $z$ from $D_R$ is
	\[
	t(z,\omega)=
	\begin{cases}
		\min\{j\,;\, |f^j_{\omega}(z)|\geq R\} & z\in\mathcal{A}_{\omega}(\infty)\\
		\infty & z\in \mathcal{K}_{\omega}
	\end{cases}
	\]
\end{dfn}
\begin{prop}\label{prop:3}
	For every $z\in\mathcal{A}_{\omega}(\infty)\cap D_R$, we have
	\[
	\frac{\log R - 1}{3^{t(z,\omega)}} \leq g_{\omega}(z) \leq \frac{\log R +1}{3^{t(z,\omega)-1}}.
	\]
\end{prop}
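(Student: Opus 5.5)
The plan is to combine the functional equation of Corollary \ref{cor:3omega} with the estimate of Proposition \ref{prop:1} on $\Delta_R$, where $g$ is comparable to $\log|z|$. Let $t=t(z,\omega)$ be the escape time. It is finite because $z\in\mathcal{A}_{\omega}(\infty)$: by Lemma \ref{lem:R} and the choice of $R$, once an orbit leaves $\overline{D}_R$ it never returns. Iterating Corollary \ref{cor:3omega} $t$ times gives
\[
g_{\omega}(z)=\frac{1}{3^{t}}\,g_{\sigma^{t}\omega}\bigl(f^{t}_{\omega}(z)\bigr),
\]
and likewise with $t-1$ in place of $t$. The bounds on $g_{\omega}(z)$ thus reduce to bounds on $g_{\sigma^t\omega}(w)$, where $w=f^t_\omega(z)$ satisfies $|w|\geq R$, and on $g_{\sigma^{t-1}\omega}(w')$, where $w'=f^{t-1}_\omega(z)$ satisfies $|w'|<R$ by minimality of $t$. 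Here $t\ge 1$ because $z\in D_R$.

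For the lower bound I will apply Proposition \ref{prop:1} with $\epsilon=1$ to the shifted sequence $\sigma^t\omega$. This is legitimate because $R$ was chosen uniformly over $\Omega$, and $\sigma^t\omega\in\Omega$. It gives $g_{\sigma^t\omega}(w)\ge \log|w|-1\ge \log R-1$. The case $|w|=R$ exactly, on the boundary of $\Delta_R$, follows by continuity of $g$ (Green's function Theorem). Dividing by $3^t$ yields the left inequality.

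For the upper bound I will use $w'\in D_R$ and Proposition \ref{prop:2} applied to $\sigma^{t-1}\omega$, which gives $g_{\sigma^{t-1}\omega}(w')\le \log R+1$. Dividing by $3^{t-1}$ gives the right inequality. The main point needing care is that Proposition \ref{prop:2} is really a maximum-principle statement. The function $g$ is subharmonic on $\mathbb{C}$ (harmonic off $\mathcal{K}$, zero on $\mathcal{K}$, continuous), so its supremum over $D_R$ is attained on $|z|=R$, where Proposition \ref{prop:1} gives at most $\log R+1$. I would verify this step explicitly, since all the uniformity in $\omega$ needed for the shifted sequences rests on it, and it also requires $\epsilon=1$ to be admissible for the fixed $R$ (enlarging $R$ if necessary, as in Proposition \ref{prop:1}).
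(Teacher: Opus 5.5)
Your proposal is correct and is essentially the paper's proof. You iterate Corollary~\ref{cor:3omega} to get $g_\omega(z)=3^{-t}g_{\sigma^t\omega}(f^t_\omega(z))=3^{1-t}g_{\sigma^{t-1}\omega}(f^{t-1}_\omega(z))$, bound the first expression from below by Proposition~\ref{prop:1} with $\epsilon=1$ (since $|f^t_\omega(z)|\ge R$), and bound the second from above by Proposition~\ref{prop:2} (since $f^{t-1}_\omega(z)\in D_R$ by minimality of $t$), exactly as the paper does.

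Your extra steps are sound. Treating $|f^t_\omega(z)|=R$ by continuity, and justifying Proposition~\ref{prop:2} via subharmonicity of $g$ and the maximum principle, fill small gaps the paper passes over; the paper cites only Proposition~\ref{prop:1}, which is stated on $\Delta_R$.
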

\begin{proof}
	According to Corollary \ref{cor:3omega} we have $g_{\sigma(\omega)}(f_{\omega}(z)) = 3 g_{\omega}(z)$. Therefore, it is evident that $f_{\omega}^{t(z,\omega)-1}(z) \in D_R$ and
	\[
	g_{\sigma^{t(z,\omega)}(\omega)}(f^{t(z,\omega)}_{\omega}(z)) = 3^{t(z,\omega)} g_{\omega}(z).
	\]
	Hence, using Proposition \ref{prop:2} we have
	\[
	g_{\sigma^{t(z,\omega)}(\omega)}(f^{t(z,\omega)}_{\omega}(z)) = 3 g_{\sigma^{t(z,\omega)-1}(\omega)}(f^{t(z,\omega)-1}_{\omega}(z)) \leq 3(\log R +1).
	\]
	On the other hand
	\[
	g_{\sigma^{t(z,\omega)}(\omega)}(f^{t(z,\omega)}_{\omega}(z)) \geq \log\left| f^{t(z,\omega)}_{\omega}(z)\right| - 1 \geq \log R -1.
	\]
	This finalize the proof of this Proposition.
\end{proof}
%Due to Proposition \ref{prop:3} we can conclude that if $\hat{z}$ be a critical point of $f^n_{\omega}$ then ...?????????
\begin{lem}\label{lem: rho}
	Suppose that $\omega\in\Omega$ and $\rho \in [R,\tilde{R}]$ then
	\[\mathcal{K}_{\omega} = \bigcap_{n\in\mathbb{N}} (f^n_{\omega})^{-1}(D_{\rho}).\] 
\end{lem}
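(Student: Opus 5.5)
We prove the two inclusions separately. The key tools are the mapping property of Lemma \ref{lem:R tilde}, namely $f_{c}^{-1}(D_{\tilde R})\subset D_R$ for every admissible parameter, and the escape criterion of Lemma \ref{lem:R}.

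\emph{The inclusion $\mathcal{K}_{\omega}\subset\bigcap_{n}(f^n_{\omega})^{-1}(D_{\rho})$.} Let $z\in\mathcal{K}_{\omega}$ and $n\in\mathbb{N}$. Suppose, to the contrary, that $|f^n_{\omega}(z)|\ge\rho\ge R>R_{\delta}$. Lemma \ref{lem:R} applies to the shifted sequence $\sigma^n\omega$, which has parameters in the same disk. It gives $|f^{m}_{\sigma^n\omega}(f^n_{\omega}(z))|>\rho\beta^m\to\infty$. Hence the orbit of $z$ is unbounded, contradicting $z\in\mathcal{K}_{\omega}$. The only subtlety is the borderline case $|w|=\rho=R$. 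There we still have $|w|>R_\delta$, so the proof of Lemma \ref{lem:R} goes through unchanged, since it only uses $|w|^2-\delta>1$. Thus $f^n_{\omega}(z)\in D_\rho$ for every $n$.

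\emph{The reverse inclusion.} Let $z$ satisfy $f^n_{\omega}(z)\in D_\rho$ for all $n$. Then $|f^n_{\omega}(z)|<\rho\le\tilde R$ for every $n$. So the orbit is bounded and $z\in\mathcal{K}_{\omega}$ by definition. This direction is immediate.

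For the intersection to be well behaved we also want the sets $(f^n_\omega)^{-1}(D_\rho)$ to be nested decreasing. This is not needed for the equality itself, but it is used later to identify the $B_k$. Suppose $f^{n+1}_\omega(z)\in D_\rho\subset D_{\tilde R}$. Lemma \ref{lem:R tilde}, applied with parameter $c_{n+1}$, gives $f^n_\omega(z)\in f_{c_{n+1}}^{-1}(D_{\tilde R})\subset D_R\subset D_\rho$. Hence $(f^{n+1}_\omega)^{-1}(D_\rho)\subset(f^n_\omega)^{-1}(D_\rho)$.

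The main step to check carefully is the uniformity in the first inclusion. The escape estimate must hold at every time $n$ with a constant independent of $n$. This follows because Lemma \ref{lem:R} depends only on $\delta$ and not on the particular sequence.
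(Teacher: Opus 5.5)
Your proof is correct and follows the same route as the paper. The inclusion $\bigcap_n (f^n_\omega)^{-1}(D_\rho)\subset\mathcal{K}_\omega$ is immediate, and the other inclusion goes by contradiction: once $|f^n_\omega(z)|\ge\rho\ge R>R_\delta$ the orbit escapes, which you justify explicitly by applying Lemma \ref{lem:R} to $\sigma^n\omega$, a step the paper leaves implicit. Your nestedness paragraph and your remark about uniformity in $n$ are harmless but not needed, since the contradiction is reached for a single fixed $n$.
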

\begin{proof}
	Let $z\in \bigcap_{n\in\mathbb{N}} (f^n_{\omega})^{-1}(D_{\rho})$. Then for every $n\in\mathbb{N}$ we have $f^n_{\omega}(z)\in D_{\rho}$. So $z\in \mathcal{K}_{\omega}$. 
	Now suppose that $z\in \mathcal{K}_{\omega}$ and $z\notin \bigcap_{n\in\mathbb{N}} (f^n_{\omega})^{-1}(D_{\rho})$. This means that there exists $n\in\mathbb{N}$ such that $|f^n_{\omega}(z)| \geq \rho \geq R$. Therefore, $f^{n+j}_{\omega}(z) \to \infty$ as $j\to \infty$, which is a contradiction.
\end{proof}
%For each $k\in\mathbb{N}$, put
%\[
%B_k = (f^k_{\omega})^{-1}(D_{\rho}).
%\]
%This is a union of pairwise disjoint topological discs $B_{k,j}$. These topological discs are being mapped by a $d_j^k$-degree map $f^k_{\omega}$ to $D_{\rho}$, where $d_j < 3$ is a non-zero real number.  In the following theorem we deal with the case $\rho = \tilde{R}$, and represent a sufficient condition for total disconnectedness of the Julia set $\mathcal{J}_{\omega}$. It is worth mentioning that, a same result is valid for the quadratic random family of polynomials $z^2+c_n$ (see \cite[Proposition 10]{totall}).
%%\begin{teo}\label{teo:at most N}
%	Let $\omega \in D_{\delta}^{\mathbb{N}}$, with $\delta > 0$. If there exists a natural number $N$ such that for finitely many $k\in\mathbb{N}$ and for each component $B_{k,j}$ of $B_k$, the degree of the map $f_{\omega}^k : B_{k,j} \to D_{\tilde{R}}$ is at most $N$. Then the Julia set $\mathcal{J}_{\omega}$ is totally disconnected.
%\end{teo}
\begin{proof}[{\bf Proof of Theorem \ref{teo:at most N}}]
	\noindent
	Let $R,\tilde{R}$ and $\rho$ are as given before. For every $\nu \in \Omega$, we observe that $\Delta_R=\widehat{\mathbb{C}}\setminus D_R$ is subset of the attracting basin of infinity $\mathcal{A}_{\nu}(\infty)$. Also, we have $f_{\nu}^{-1}(D_{\tilde{R}}) \subset D_R$ (see Lemma \ref{lem:R tilde}).
	Define the annulus 
	$Ann(0;R,\tilde{R})  = \{z\in\mathbb{C}; \, R < |z| < \tilde{R}\}$.
	Now, we partition the annulus $Ann(0;R,\tilde{R})$ into $N$ smaller concentric annuli, each with the same modulus $M=\frac{1}{2\pi}\ln\left(\frac{\tilde{R}}{R}\right)$. Similar to $Ann(0;R,\tilde{R})$, these $N$ smaller annuli are located within the intersection of $D_{\tilde{R}}$ and the attracting basins $\mathcal{A}_{\nu}(\infty)$. 
	
	To demonstrate that $\mathcal{J}_{\omega}$ is totally disconnected, we prove that its only connected components are one-point sets. By the assumption of Theorem, for an increasing sequence of positive integers $(k_m)$, and for each component $B_{k_m,j}$ of $B_{k_m}$, with $m\in\mathbb{N}$, the degree of the map $f_{\omega}^{k_m}$ on $B_{k_n,j}$ is at most $N$ for all $j$ (remember that each $B_k$ is union of pairwise disjoint topological discs $B_{k,j}$).  
	
	Select an arbitrary point $z$ from the Julia set $\mathcal{J}_{\omega}$ and identify the component $B_{k_n,j_n}$ of $B_{k_n}$ containing $z$. So, there must exist at least one of the $N$ annuli free of critical values of $f_{\omega}^{k_n}$. Let this annulus be denoted as $Ann(0;R,\tilde{R})_n$. 
	
	Now, consider the disk $D' \subset D_{\tilde{R}}$, defined by the outer boundary of $Ann(0;R,\tilde{R})_n$, and denote by $B_{k_n,j_n}'$ the component of $\left(f_{\omega}^{k_n}\right)^{-1}(D')$ that contains $z$. 
	
	The map $f_{\omega}^{k_n}: B_{k_n,j_n}' \rightarrow D'$ is proper. In fact, generally we can prove that every Polynomial function $P: \mathbb{K} \to \mathbb{K}$, where $\mathbb{K}$ is is either the field of real numbers or the field of complex numbers,  is a proper map (i.e., the pre-image of a compact set is compact). The proof of this affirmation is easy and the reader can find it in any elementary analysis textbook.
	Therefore, the pre-image of $Ann(0;R,\tilde{R})_n$ under the map $f_{\omega}^{k_n}: B_{k_n,j_n}' \rightarrow D'$, is another (topological) annulus denoted by $Ann(0;R,\tilde{R})_n'$. The restriction of $f_{\omega}^{k_n}$ to $Ann(0;R,\tilde{R})_n'$ is a covering map (remember that $Ann(0;R,\tilde{R})_n$ is free of critical value of $f^{k_n}_{\omega}$) with a degree of at most $N$, ensuring that the modulus of $Ann(0;R,\tilde{R})_n'$ is at least $M/N$.
	
	%	The point $z$ lies  in the bounded component of the complement of\linebreak $Ann(0;R,\tilde{R})_n'$. 
	
	Using Lemma \ref{lem:R tilde}, we can conclude that for any $k \geq 1$, the set $f_{\sigma^{k-1}\omega}^{-1}(D_{\tilde{R}})$ is contained within $D_R$. Consequently, for any $k$ and any $\omega \in \Omega$, each component of $\left(f_{\omega}^{k+1}\right)^{-1}\left(D_{\tilde{R}}\right)$ is contained within a component of $\left(f_{\omega}^{k}\right)^{-1}\left(D_{R}\right)$, since each such component is mapped by $f_{\omega}^{k}$ onto some component of \linebreak $f_{\sigma^{k-1}\omega}^{-1}\left(D_{\tilde{R}}\right)$. This observation remains valid if $k+1$ is replaced by any integer $m > k$.
	
	Applying this for $k = k_n$ and $m = k_{n+1}$, we again find a topological annulus $Ann(0;R,\tilde{R})_{n+1}'$ of modulus at least $M/N$ in the component of $\left(f_{\omega}^{k_{n+1}}\right)^{-1}\left(D_{\tilde{R}}\right)$ containing $z$, with $z$ lying in the bounded component of the complement of $Ann(0;R,\tilde{R})_{n+1}'$.
	
	Accordingly, $Ann(0;R,\tilde{R})_{n+1}'$ is contained within the component of \linebreak $\left(f_{\omega}^{k_n}\right)^{-1}(D_R)$ containing $z$, thereby also being within the bounded component of the complement of $Ann(0;R,\tilde{R})_n'$.
	
	Thus, we construct an infinite sequence of nested, disjoint annuli \linebreak 
	$Ann(0;R,\tilde{R})_n'$, each with a modulus of at least $M/N$, all contained within $\mathcal{A}_{\omega}(\infty)$. The point $z$ remains within the bounded component of the complement of each annulus.
	
	Let $D_r$ be the outer boundary of $B_{k_n,j_n}$. Consider the topological annulus $Ann(0;r,\tilde{R})_n$. Since it includes the sequence 
	\[Ann(0;R,\tilde{R})_1', Ann(0;R,\tilde{R})_2', \dots, Ann(0;R,\tilde{R})_n',
	\]
	each with modulus at least $M/N$, so Grötzsch's inequality implies that $Ann(0;r,\tilde{R})_n$ must have a modulus of at least $nM/N$.  Due to \cite[Theorem 2.1]{McMullen} we know that 
		Any annulus $A \subset \mathbb{C}$ of sufficiently large modulus contains an essential round annulus $B$ with $mod(A) = mod(B) + O(1)$. Here essential means $B$ separates the boundary components of $A$. Therefore,  as a result,  we can conclude that $Ann(0;r,\tilde{R})_n$  contains a geometric annulus of modulus at least $nM/N - C$ (for some constant $C$), separating the boundary components of $Ann(0;r,\tilde{R})_n$. Since the connected component of $\mathcal{K}_{\omega}$ containing $z$ is in the bounded component of the complement of $Ann(0;r,\tilde{R})$ for all $n$, the component of $\mathcal{K}_{\omega}$ containing $z$ must have an arbitrarily small diameter, indicating it consists solely of the point $z$.
		
		As $z$ was chosen arbitrarily, this shows that the Julia set is totally disconnected, thus concluding the proof.
\end{proof}

%\begin{definition}\label{def:typically fast}
%	Let $W\subset\mathbb{C}$ be a bounded Borel set with $W\subset D_{\delta}$ for some $\delta > 0$. Also let $\mu_{\delta}$ be a probability Borel measure on $W$, and $\mathbb{P}={\bigotimes}_{n=0}^{\infty}\mu_{\delta}$ the product distribution on $W^{\mathbb{N}}$ generated by $\mu_{\delta}$. Suppose that $R_{\delta}$ and $G_{\delta}$ are as given in Lemma \ref{lem:R tilde}. For $\omega\in W^{\mathbb{N}}$, let $z\in\mathbb{C}$, we say that $z$ is typically fast escaping if there exists $\gamma>0$ such that
%	\[
%	\mathbb{P}\left(A_k(z)\right) < e^{-\gamma k},
%	\]   
%	where 
%	\[
%	A_k(z) = \{\omega\in\Omega ~;~g_{\omega}(z) < \frac{G_{\delta}}{3^k}\}.
%	\]
%\end{definition}
%\begin{teo}\label{teo:Ak}
%	Let $\hat{z}\in\mathcal{C}_{(f^n_{\omega})}$ be a critical point of $f^n_{\omega}$. Set
%	\[
%	A_k := \bigcap_{\hat{z}\in\mathcal{C}_{(f^n_{\omega})}} A_k(\hat{z}). 
%	\]
%	\begin{itemize}
	%		\item[(a)] For all $i=0,\dots,k-1$, if $\sigma^i\omega\notin A_{k-i}$ then, for every connected component $B_{k,j}$ of $B_k$, the map $f^k_{\omega} : B_{k,j} \to D_{\tilde{R}}$ has degree 1. 
	%		\item[(b)] If this fails for $0\leq l\leq k-1$ indices (i.e., $\sigma^i\omega \in A_k$ for $l$ indices) then the degree of the map $f^k_{\omega}$ is at most $N=3^l$. Therefore, in both cases the Julia set $\mathcal{J}_{\omega}$ is totally disconnected. 
	%	\end{itemize}
%\end{teo}

\begin{proof}[{\bf Proof of Theorem \ref{teo:Ak}}]
	Let $\hat{z}$ be an arbitrary critical point. First note that due to Lemma \ref{lem:r}, the set $A_k(\hat{z})$ is not empty. For a fixed $k$, let $B_{k,*}$ be some component of $B_k=(f^k_{\omega})^{-1}(D_{\tilde{R}})$. Also, let  $G_{\delta} =\sup_{R\leq |z|\leq \tilde{R}} g_{\omega}(z) < \infty$. Thus, for every $\nu \in\Omega$ and $z\in D_{\tilde{R}}$ we have $g_{\nu}(z) \leq G_{\delta}$. We have the following sequence of maps
	\[
	B_{k,*}
	{\overset{ f_{\omega}} \longrightarrow }
	B_{k-1,*}
	{\overset{ f_{\sigma\omega}} \longrightarrow } 
	B_{k-2,*} \longrightarrow \dots \longrightarrow B_{k-i,*}
	{\overset{ f_{\sigma^i\omega}} \longrightarrow }
	B_{k-i-1,*}
	\longrightarrow \dots
	{\overset{ f_{\sigma^{k-1} \omega}} \longrightarrow} 
	D_{\tilde{R}}
	\]
	\begin{figure}[htp!]
		\centering
		\includegraphics[scale=0.65]{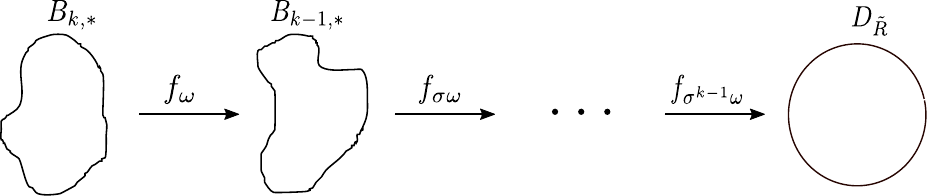}
		\caption{}\label{fig:fig1}
	\end{figure}

	If $\hat{z} \in B_{k-i,*}$ then the map $f_{\sigma^i \omega}$ has degree 3. Otherwise, using The Local Mapping Theorem, it is univalent (remember that if $f$ is an analytic map on a domain $D$, then $f'(z_0)\neq 0$ if and only if $f$ is locally univalent at $z_0$, see  \cite[Sec.~3.3]{ahlfors}). Now 
	
	(a) if      
	$\sigma^i\omega\notin A_{k-i}(\hat{z})$ then we have
	\[
	g_{\sigma^i\omega}(\hat{z}) \geq \frac{G_{\delta}}{3^{k-i}}.
	\]
	On the other hand, for all $z\in B_{k-1,*}$ we have
	\[
	g_{\sigma^i\omega}(z)=\frac{1}{3^{k-i}}g\left(f^{k-1}_{\sigma^i\omega}(z)\right) < \frac{G_{\delta}}{3^{k-i}}.
	\]
	Thereupon, $\hat{z}\notin B_{k-i,*}$ and so $f_{\sigma^i\omega}: B_{k-i,*} \to B_{k-i-1,*}$ is univalent. If this happens for all $i=0,\dots,k-1$ then the map $f^k_{\omega}$ has degree 1.  
	
	(b) If  $\sigma^i\omega \in A_k(\hat{z})$ for $l$ indices, then for these $l$ indices, the degree of $f_{\sigma^i\omega}$  can be one, two or three. And for others the degree is one. Hence, the degree of $f^k_{\omega}$ is at most $N=3^l$. Therefore in both cases, using Theorem \ref{teo:at most N}, the Julia set $\mathcal{J}_{\omega}$ is totally disconnected.
	
\end{proof}

\begin{proof}[{\bf Proof of Theorem \ref{teo: P almost every}}]
	Consider an arbitrary critical point $\hat{z}\in\mathcal{C}_{(f^n_{\omega})}$. Let $k\in\mathbb{N}$. If $\hat{z}$ is fast escaping then by Definition \ref{def:typically fast}, there exists $\gamma>0$ such that $\mathbb{P}(A_k(\hat{z})) < e^{-\gamma k}$. Consider the collection of measurable sets $\left\{A_k(\hat{z})\right\}_{k=1}^{\infty} \subset \Omega$. We have
	\[
	\sum_{k=1}^{\infty} \mathbb{P}(A_k(\hat{z})) < \sum_{k=1}^{\infty} e^{-\gamma k}= \frac{e^{-\gamma}}{1-e^{-\gamma}} < \infty.
	\] 
	Then, due to Borel-Cantelli Lemma (see \cite{Borel}) we conclude that \linebreak$\mathbb{P}\left(\bigcap_{n=1}^{\infty}\bigcup_{k=n}^{\infty}A_k(\hat{z})\right) = 0 $, or equivalently 
	\[
	\mathbb{P}\left(\left\{\omega\in\Omega; \text{  There exists infinitely many n such that } \omega\in A_n(\hat{z})   \right\}\right) = 0,
	\]
	i.e., almost surely, only finitely many $A_k(\hat{z})$'s will occur.
	For each $k\in\mathbb{N}$, since the sequence of sets $\{A_k(\hat{z})\}$ satisfies the nested property $A_{k+1}(\hat{z})\subset A_k(\hat{z})$ then $\{\mathbb{P}(A_k(\hat{z})\}$ is non-creasing. Set $A_{\infty}(\hat{z}) = \bigcap_{k=1}^{\infty} A_k(\hat{z})$. Therefore, $\mathbb{P}(A_{\infty}(\hat{z}))$ exists and is zero, since
	\[
	0 = \mathbb{P}\left(\bigcap_{n=1}^{\infty}\bigcup_{k=n}^{\infty}A_k(\hat{z})\right) = \mathbb{P}(A_{\infty}(\hat{z})).
	\]
	Thus, we can say that the sequence $(A_k(\hat{z}))$ shrinks to a zero measure set in the limit and for any fixed $\l\in\mathbb{N}$, we have
	\[
	\lim_{\l\to\infty}\mathbb{P}(\bigcup_{k=l}^{\infty}A_k(\hat{z}))=\lim_{\l\to\infty}\mathbb{P}(A_l(\hat{z})) = 0.
	\]
	Thus, for some large enough $\l$, we can guarantee that $\mathbb{P}(A_l(\hat{z}))$ is arbitrarily small. This means that there exists a positive measure set $A(\hat{z})=\Omega \setminus A_l(\hat{z})$ such that avoids all $A_k(\hat{z})$ for $k\geq \l$, i.e.,
	\[
	A(\hat{z}) \bigcap \left( \bigcup_{k=\l}^{\infty} A_k(\hat{z})\right) = \emptyset.
	\]
	Let $k \geq l$. Define the set
	\[
	\mathfrak{A}(\hat{z})=\left\{\nu\in\Omega;\,\,\sigma^i\nu \in A_{k-i}(\hat{z}) \text{ for more than } l \text{ indices } i\in\{0,\dots,k-1\} \right\}.
	\]
	{\bf{Claim:}} If $\nu\in\mathfrak{A}(\hat{z})$ then $\sigma^k\nu\notin A(\hat{z})$.\\
	Proof) Let $\nu\in\mathfrak{A}(\hat{z})$. By definition, we have $\sigma^i\nu \in A_{k-i}(\hat{z})$ for more than $l$ indices $i\in\{0,\dots,k-1\}$. Therefore, $g_{\sigma^i\nu}(\hat{z}) < \frac{G_{\delta}}{3^{k-i}}$.  Put $k-i=m$, so  $g_{\sigma^{k-m}\nu}(\hat{z}) < \frac{G_{\delta}}{3^{m}}$, i.e., $\sigma^{k-m}\nu \in A_m(\hat{z})$. Equivalently, $\sigma^k\nu \in \sigma^m\left(A_m(\hat{z})\right)$. Thus, there exists $\nu'\in A_m(\hat{z})$ such that $\sigma^k\nu = \nu'$. If $\nu=(c_n)_{n\geq 1}$, then since $\sigma^k\nu=(c_{k+n})_{n\geq 1}$ and $f^n_{\sigma^k\nu}(\hat{z})=f^{n+k}_{\nu}(\hat{z})$ so we have
	\begin{eqnarray*}
		g_{\nu'}(\hat{z})=g_{\sigma^k\nu}(\hat{z})&=&\lim_{n\to\infty}\frac{1}{3^n}\log|f^n_{\sigma^k\nu}(\hat{z})|=\lim_{n\to\infty}\frac{1}{3^n}\log|f^{n+k}_{\nu}(\hat{z})|\\ 
		&=& g_{\nu}(f^k_{\nu}(\hat{z})) < \frac{G_{\delta}}{3^m}.
	\end{eqnarray*}
	On the other hand,Since the Green's function $g_{\nu}(\hat{z})$ describes the asymptotic growth rate of the iterates of $f^n_{\nu}(\hat{z})$, the inequality suggests that the point $f^k_{\nu}(\hat{z})$ does not exhibit growth exceeding the rate $\displaystyle \frac{G_{\delta}}{3^m}$. This means that, If the value of $g_{\nu}$ at $f_{\nu}^k(\hat{z})$ is bounded above by 
	$\displaystyle\frac{G_{\delta}}{3^m}$, it suggests that the orbit of
	$\hat{z}$ under the iteration does not diverge too rapidly. More precisely, the function $g_{\nu}$ is often non-decreasing along orbits, hence
	\[
	g_{\nu}(\hat{z})<g_{\nu}(f^k_{\nu}(\hat{z})) < \frac{G_{\delta}}{3^m}.
	\]
	This implies that $\nu\in A_m(\hat{z})$. Since this happens for more than $\l$ indices $m$, the definition of the set $A(\hat{z})$ implies that $\sigma^k\nu\notin A(\hat{z})$.$\,\,\square$
	
	Backing to the proof of the Theorem, set
	\[
	\mathcal{L}(\hat{z}) = \{\nu\in\Omega; \,\,\nu\in\mathfrak{A}(\hat{z}) \text{ for all but finitely many integers } k \}.
	\]
	Using above observation, it follows that $\mathbb{P}(\mathcal{L}(\hat{z}))=0$. Now, suppose that $\nu\notin\mathcal{L}(\hat{z})$. Therefore, for infinitely many positive integers $k$, we have $\sigma^i\nu\notin A_{k-i}(\hat{z})$ for all but at most $\ell$ indices $i\in\{0,\dots,k-1\}$ thus due to item (b) of Theorem \ref{teo:Ak}, we have $\mathcal{J}_{\nu}$ is totally disconnected.
	
\end{proof}

\begin{proof}[{\bf Proof of Theorem \ref{teo7}}]
Following the notations stated before, let $\Delta_R = \{z\in\widehat{\mathbb{C}}; |z| > R > R_{\delta}\}$ be an invariant domain with $\Delta_R \cap \mathfrak{D} = \emptyset$. The complement of $\mathfrak{D}$ (witch we denote it by $\mathfrak{D}^c$ ) is a closed subset of $\mathcal{A}_{\omega}(\infty)$ and on this $f^k_{\omega}$ converges uniformly to $\infty$ as $k\to \infty$. Therefore, there exists $k'\in\mathbb{N}$ such that for all $k>k'$ we have $f^k_{\omega}(\mathfrak{D}^c) \subset \Delta_R$. The map $f^k_{\omega}(z)$ is a $3^k$-degree polynomial and has $3^k$ local inverse functions we denote them by $\mathtt{f}^k_j(z)$ for $j=1,\dots, 3^k$. Put $\mathfrak{D}_{k,j}:= \mathtt{f}^k_j(\mathfrak{D})$. These sets are disjoint and simply connected domains.
\item[{\bf Claim 1:}] $\mathcal{J}_{\omega} \subset \bigcup_{j=1}^{3^k} \mathfrak{D}_{k,j}$ for every $k\geq k'$. 
\item[Proof:] If $z\in \mathcal{J}_{\omega}$ then $f^k_{\omega}(z)\in \mathfrak{D}$ and consequently, there exists $1\leq j \leq 3^k$ such that $z\in \mathtt{f}^k_j(\mathfrak{D}).\quad \square$

\item[{\bf Claim 2:}] $\mathfrak{D}_{k,j} \subset \mathfrak{D}$ for $k\geq k'$.

\item[Proof:] 

Let $z\in \mathfrak{D}_{k,j} = \mathtt{f}^k_{j}(\mathfrak{D})$. Then $f^k_{\omega}(z)\in \mathfrak{D}$. Now if $z\notin \mathfrak{D}$ then $z\in\mathfrak{D}^c$ so $f^k_{\omega}(z)\in f^k_{\omega}(\mathfrak{D}^c) \subset \Delta_R$. Therefore, $\Delta_R \cap \mathfrak{D} \ne \emptyset$ which is a contradiction. $\quad \square$

\item[{\bf Claim 3:}] The family $\mathfrak{N} = \{\mathtt{f}^k_j : j= 1,\dots, 3^k\},$ is normal in $\mathfrak{D}$.

\item[Proof:] This is trivial using Montel Theorem (see \cite[Theorem 3.7]{Milnor}) considering the fact that $\mathtt{f}^k_j$ ($j=1,\dots, 3^k$) are local inverse functions of $f^k_{\omega}$. $\quad \square$

To prove that $\mathcal{J}_{\omega}$ is totally disconnected, let $E$ be a component of $\mathcal{J}_{\omega}$. We aim to show that $E$ is a single point. Due to claim (1), for every $k\in\mathbb{N}$, there exists $j_k\in\{1,\dots,3^k\}$ such that, $E \subset E_k:= \mathfrak{D}_{k,j_k}$. We have
\[
\mathfrak{N} \ni \mathtt{f}^k_{j_k}(\mathfrak{D}) = \mathfrak{D}_{k,j_k} = E_k \supset E.
\] 
According to the claim (3) as $\mathfrak{N}$ is a normal family in $\mathfrak{D}$ so there exists a sub-sequence $\left( \mathtt{f}^{k_{\ell}}_{j_{k_{\ell}}} \right)_{{\ell}}$ of  $\left( \mathtt{f}^{k}_{j_{k}} \right)_{k}$ such that $\mathtt{f}^{k_{\ell}}_{j_{k_{\ell}}}$ tends, locally uniformly in $\mathfrak{D}$, to a function $\phi$. If we demonstrate that $\phi$ is a constant function on $\mathfrak{D}$ then we can conclude that for every compact subset $K$ of $\mathfrak{D}$ we have:
\[
\lim_{\ell\to\infty}diam\left(\mathtt{f}^{k_{\ell}}_{j_{k_{\ell}}}(K)\right)=0,
\] 
where $diam\left(\mathtt{f}^{k_{\ell}}_{j_{k_{\ell}}}(K)\right) = \sup\{|w_1-w_2|, \,\, w_1,w_2 \in \mathtt{f}^{k_{\ell}}_{j_{k_{\ell}}}(K)\}$. This implies that $E$ is a point. Equivalently, $\mathcal{J}_{\omega}$ is totally disconnected. So, it rests to prove the following affirmation. But First we recall a well-known Hurwitz Theorem (see \cite{Zalcman}): 
\begin{teo*}[\cite{Zalcman}]
	Let $\{g_n\}$ be a sequence of holomorphic functions in a domain $U$ converging uniformly continuous toward $g$. Assume that the functions $g_n$ are injective, then $g$ is either constant or injective.
\end{teo*}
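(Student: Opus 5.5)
The statement is the classical Hurwitz theorem in its injective form. I would prove it by contradiction, using the argument principle (equivalently, the zero-counting version of Hurwitz's theorem). Assume $g_n \to g$ locally uniformly on the domain $U$ with each $g_n$ injective. Suppose $g$ is neither constant nor injective. Then there are points $a \neq b$ in $U$ with $g(a) = g(b) =: w$.

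Consider the auxiliary functions $h_n(z) = g_n(z) - w$ and $h(z) = g(z) - w$, which satisfy $h_n \to h$ locally uniformly. Since $g$ is non-constant on the connected set $U$, the identity theorem says the zeros of $h$ are isolated. Choose radii $r > 0$ small enough that:
\begin{itemize}
\item the closed discs $\overline{D}(a,r)$ and $\overline{D}(b,r)$ are disjoint and contained in $U$;
\item $h$ has no zeros on either disc except at the centres.
\end{itemize}
Set $m = \min\{|h(z)| : |z-a| = r \text{ or } |z-b| = r\}$, which is positive by compactness.

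By uniform convergence on the two circles, for $n$ large we have $|h_n - h| < m \le |h|$ on both circles. Rouché's theorem then gives that $h_n$ has at least one zero in $D(a,r)$ and at least one in $D(b,r)$, since $h$ has a zero at each centre. These discs are disjoint, so $g_n$ takes the value $w$ at two distinct points, which contradicts the injectivity of $g_n$. Hence $g$ is constant or injective.

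The only delicate step is the choice of $r$, which must isolate the zeros of $h$ at $a$ and at $b$. That is exactly where the non-constancy of $g$ enters, through the identity theorem; the rest is routine. I would also note that "converging uniformly" should be read as locally uniform convergence on compact subsets of $U$, which is all the argument uses and is the form needed for the normal-family limit $\phi$ in the proof of Theorem~\ref{teo7}.
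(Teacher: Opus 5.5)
Your Rouch\'e argument is correct and complete; it is the standard proof of the injective form of Hurwitz's theorem. The paper gives no proof of its own: it quotes the result from Zalcman for use in Claim~4 of the proof of Theorem~\ref{teo7}, so there is no other route to compare against. The one step worth stating explicitly is that $g$ is holomorphic, by Weierstrass's theorem on locally uniform limits; you use this implicitly for both the identity theorem and Rouch\'e's theorem.
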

\item[{\bf Claim 4:}] $\phi$ is a constant function on $\mathfrak{D}$.
\item[Proof:] Suppose that $\phi$ is not constant on $\mathfrak{D}$, so it must be injective due to the above Theorem. For each $\ell\in\mathbb{N}$, consider the map $\mathtt{f}^{k_{\ell}}_{j_{k_{\ell}}}$ which is a local inverse of $f^{k_{\ell}}_{\omega} = f_{c_{k_{\ell}}} \circ \dots \circ f_{c_1}$. Therefore, 
%there exists $m_{\ell}\in\mathbb{N}$ such that 
$\mathtt{f}^{k_{\ell}}_{j_{k_{\ell}}}\circ f^{k_{\ell}}_{\omega}(z) = z$ on $E_{k_{\ell}}$. Now consider $\zeta\in E$. Due to Claim (1), we can assume that the sequence $\left(f^{k_{\ell}}_{\omega}(\zeta)\right)$ tends to an element of $\mathfrak{D}$, denoted by $\xi$, therefore we have $\phi(\xi) = \zeta \in \mathcal{J}_{\omega}$. On the other hand, using \cite[Lemma 1.1]{Alves-Salarinoghabi} we know that 
\[
f^k_{\omega}(\mathcal{J}_{(c_n)}) = \mathcal{J}_{(c_{n+k})}, 
\]
and also using Proposition \ref{prop:1} we have
\[
\lim_{|z|\to \infty}\big(g_{(c_{n+k})}(z) -\log|z|\big) = 0. 
\]
Hence,
\[
diam\big(f^k_{\omega}(\mathcal{J}_{(c_n)})\big) \geq 1,\quad \forall k\in\mathbb{N}.
\]
So it is possible to choose $\epsilon>0$ in such a way that the disk of center $\xi$ and radius $\epsilon$, denoted by $D(\xi,\epsilon)$, contains in $\mathfrak{D}$ and $f^k_{\omega}(\mathcal{J}_{\omega}) \not\subset D(\xi,\epsilon))$ for all $k\in\mathbb{N}$. Since $\left( \mathtt{f}^{k_{\ell}}_{j_{k_{\ell}}} \right)_{{\ell}} \to \phi$ locally uniformly on $D(\xi,\epsilon) \subset \mathcal{D}$ and $\phi$ is univalent, and also using maximum modulus principle, we conclude that $\phi(D(\xi,\epsilon))$ is simply connected. In fact, The maximum modulus principle ensures that $\phi$ achieves its maximum modulus on the boundary of $D(\xi,\epsilon)$, not in the interior. This ensures that $\phi(D(\xi,\epsilon))$ is bounded and does not extend infinitely far in the complex plane. Let 
\[
r=\inf_{z\in\partial D(\xi,\epsilon)} |\phi(z) -\phi(\xi)|.
\]
Because $\phi$ is continuous and injective, so $r>0$. Hence, there exists a neighborhood $W$ of $\phi(\xi)$ with radius $r/2$ such that $W\subset \phi(D(\xi,\epsilon))$. Now, considering the claim (2), there exists $k''\geq k'$ such that for all $k_{\ell} \geq k''$, $W \subset  \mathtt{f}^{k_{\ell}}_{j_{k_{\ell}}}\left(D(\xi,\epsilon)\right)$. From \cite[Theorem 2]{RB} we obtain that 
\[
\exists k'''\geq k'',\,\text{s.t.}\,\mathcal{J}_{\omega}\subset \left(f^k_{\omega}\right)^{-1}\left(f^k_{\omega}(W)\right) \quad \forall k\geq k'''.
\]
Finally, choose $\ell$ big enough that $k_{\ell}\geq k'''$. We have 
\[
\mathcal{J}_{\omega}\subset \left(f^{k_{\ell}}_{\omega}\right)^{-1}\left(f^{k_{\ell}}_{\omega}\left(\mathtt{f}^{k_{\ell}}_{j_{k_{\ell}}}\big(D(\xi,\epsilon)\big)\right)\right)
= \left(f^{k_{\ell}}_{\omega}\right)^{-1}\big(D(\xi,\epsilon)\big).
\]
Consequently, $f^{k_{\ell}}_{\omega}(\mathcal{J})\subset D(\xi,\epsilon)$ which is a contradiction.
\end{proof}

\begin{rem}\label{rem5}
	Observe that, by hypothesis of Theorem \ref{teo7}, we have $\mathcal{C}_{\left(f^n_{\omega}\right)}\subset \mathcal{A}_{\omega}(\infty)$. But this condition
	is neither sufficient nor necessary for $\mathcal{J}_{\omega}$ to be totally disconnected. We will show this in the next example. 
	On the other hand, a necessary condition for the Julia set $\mathcal{J}_{\omega}$ to be totally
	disconnected is that $\mathcal{C}_{\left(f^n_{\omega}\right)}\cap \mathcal{A}_{\omega}(\infty)$ is an infinite set. Otherwise there exists a positive integer $k$ such
	that the Julia set of the sequence $(F_n)_{n\geq k}$ is connected using  Theorem \CJS. But then $\mathcal{J}_{\omega}$
	has at most $2^{k-1}$ components.
\end{rem}

\subsection*{Acknowledgments}  
The first author was partially supported by  FAPEMIG APQ-02375-21, RED-00133-21 and  RED-00133-21.	The third author is supported by FAPEMIG post-doctoral scholarship with process number BPD-00761-22. 
%%%%%%%%%%%%%%%%%%%%%%%%%%%%%%%%%%%%%%%
\subsection*{Conflict of interest}
On behalf of all authors, the corresponding author states that there is no conflict of interest.

%\begin{teo}\label{teo: P almost every}
%	Let $W$, $\mu_{\delta}$ and $\mathbb{P}$ be as given in Definition \ref{def:typically fast}. If every critical point of $f^n_{\omega}$ is fast scaping (i.e., $\omega \in \mathfrak{F}$, as given in \ref{eq:F}) then the assumptions of Theorem \eqref{teo:at most N} are satisfied for $\mathbb{P}$?almost every $\omega\in W^{\mathbb{N}}$. Therefore, for $\mathbb{P}$?almost every $\omega\in W^{\mathbb{N}}$ 
%	the Julia set $\mathcal{J}_{\omega}$ is totally disconnected, i.e., for $\mathbb{P}$-almost every $\omega \in W^{\mathbb{N}}$, $\mathfrak{F} \subset \mathcal{T}$.
%\end{teo}

%%%%%%%%%%%%%%%%%%%%%%%%%
%%%%%%%%%%%%%%%%%%%%%%%%%

\end{document}